\numberwithin{equation}{section}
\newcommand{\address}[1]{\gdef\@address{#1}}
\newcommand{\printaddress}{\@address}
\newtheoremstyle{mytheoremstyle}
  {10pt}
  {10pt}
  {\itshape}
  {}
  {\bfseries}
  {}
  {1em}
  {\thmname{#1}\thmnumber{ #2}\thmnote{ (#3)}}
\theoremstyle{mytheoremstyle}
\newtheorem{thm}{Theorem}[section]
\newtheorem{prop}[thm]{Proposition}
\newtheorem{defi}[thm]{Definition}
\newtheorem{remark}[thm]{Remark}
\newtheorem{lem}[thm]{Lemma}
\renewenvironment{proof}[1][\proofname]{%
  \par\normalfont 
  \topsep6pt\relax 
  \trivlist\item[\hskip\labelsep\itshape #1\@addpunct{.}]\ignorespaces
}{%
  \qed\endtrivlist\@endpefalse
}
\DeclareMathOperator{\SL}{\mathrm{SL}}
\DeclareMathOperator{\PSL}{\mathrm{PSL}}
\DeclareMathOperator{\GF}{\mathrm{GF}}
\DeclareMathOperator{\Orbit}{\mathrm{Orbit}}
\newenvironment{mmatrix}{%
  \left(\renewcommand\arraystretch{1.5}\begin{smallmatrix}%
}{%
  \end{smallmatrix}\right)}
\begin{document}
\title{Simple $3$-designs of $\PSL(2,2^{n})$ with block size $13$}

\author{
Takara Kondo \quad Yuto Nogata\\
\normalsize{Graduate School of Science and Technology, Hirosaki University}\\
\normalsize{3 Bunkyo-cho, Hirosaki, Aomori, 036-8560, Japan}\\
\normalsize{Email: h23ms107@hirosaki-u.ac.jp \quad h24ms113@hirosaki-u.ac.jp}
}

\date{March 20, 2025}

\maketitle

\begin{center}
\textbf{Abstract}
\end{center}
This paper focuses on the investigation of simple $3$-$(2^n+1,13,\lambda)$ designs admitting $\PSL(2,2^n)$ as an automorphism group. Such designs arise from the orbits of $13$-element subsets under the action of $\PSL(2,2^n)$ on the projective line $X=\GF(2^n)\cup\{\infty\}$, and any union of these orbits also forms a $3$-design. Therefore, the possible values of $\lambda$ only depend on the number of orbits with each stabilizer size. We determine the total number of orbits when $\PSL(2,2^n)$ acts on $13$-element subsets of $X$ and classify all realizable values of $\lambda$.

\begin{center}
\textbf{Main theorem}
\end{center}
There exists a simple $3$-$(2^n+1,13,\lambda)$ design with $\PSL(2,2^{n})$ as an automorphism group if and only if 
$\lambda=66i_{66}+78i_{78}+143i_{143}+429i_{429}+572i_{572}+858i_{858}+1716i_{1716}$
where $0\le i_{\lambda_{B}}\le m_{\lambda_{B}}$ for each $\lambda_{B}\in \{66,78,143,429,572,858,1716\}$, and the values $m_{\lambda_{B}}$ are explicitly determined functions of $n$ as defined in $\S 5$.

\begin{center}
\textbf{Keywords}\,:\,Orbit decomposition, $3$-design, Projective special linear group.
\end{center}

\section{Introduction} 
Let $X$ denote a set of $v$ points, and $\mathcal{B}$ a set of $k$-subsets of $X$, called \emph{blocks}. A $t$-$(v,k,\lambda)$ design is a pair $(X,\mathcal{B})$ such that every $t$-subset of $X$ appears in exactly $\lambda$ elements of $\mathcal{B}$. A design is called \emph{simple} if no block appears more than once in $\mathcal{B}$. In this paper, we focus on simple $3$-designs obtained through the action of the projective special linear group $\PSL(2,2^n)$ on the projective line. When $q = 2^n$, $G := \PSL(2,q)$ acts sharply $3$-transitively on the projective line $X := \GF(q) \cup \{\infty\}$. This special property is fundamental to our approach: for any block $B \in \binom{X}{k}$, the orbit $G(B) := \{g(B) \mid g \in G\}$ forms a $3$-$(2^n+1,k,\lambda_B)$ design, where $\lambda_B$ is determined by the size of the stabilizer $G_B := \{g \in G \mid g(B) = B\}$. Specifically, $\lambda_B = \frac{k(k-1)(k-2)}{|G_B|}$, which establishes that $\lambda_B$ is completely determined by $|G_B|$.

Therefore, to classify all possible $3$-$(2^n+1,k,\lambda)$ designs obtainable through the action of $G$ on $X$, we need to:
\begin{enumerate}
\item identify all possible stabilizer sizes $|G_B|$ for $B \in \binom{X}{k}$,
\item determine the corresponding $\lambda_B$ values for each stabilizer size,
\item calculate the number of distinct orbits $m_{\lambda_B}$ that form $3$-$(2^n+1,k,\lambda_B)$ designs.
\end{enumerate}
The complete set of $3$-$(2^n+1,k,\lambda)$ designs can then be constructed by taking unions of these orbits.

Previous studies have systematically investigated these designs for various block sizes $k$: $k=4,5$ \cite{4}, $k=6$ \cite{5}, $k=7$ \cite{6}, $k=8$ \cite{7}, and $k=9$ \cite{8}. These cases were solved using a standard approach involving the enumeration of normalized blocks containing the set $\{0,1,\infty\}$. However, as $k$ increases, the complexity of this approach grows significantly due to the increasing number of subgroups of $\PSL(2,2^n)$ that can appear as stabilizers and the combinatorial explosion in the enumeration of blocks.

In this paper, we focus on the case $k=13$, where the possible stabilizer groups $G_B$ include $A_4$, $\mathbb{Z}_3$, and $\mathbb{Z}_2^2$. The traditional enumeration methods become particularly challenging for this case. We introduce a novel approach based on the Cauchy-Frobenius-Burnside lemma for counting orbits, which eliminates the need for explicit enumeration of blocks and their normalizations. Our main contribution is twofold:
\begin{enumerate}
\item we classify all possible values of $\lambda$ for $3$-$(2^n+1,13,\lambda)$ designs obtained through the action of $\PSL(2,2^n)$ on the projective line, by determining all possible $\lambda_B$ values and the number $m_{\lambda_B}$ of orbits for each $\lambda_B$,
\item we develop a general framework for analyzing $3$-designs admitting $\PSL(2,2^n)$ as an automorphism group, which is applicable to larger values of $k$ where the complexity of stabilizer structures increases.
\end{enumerate}

This theoretical approach provides deeper insights into the structure of these designs and establishes a more efficient method for their classification as $k$ increases.

\section{Notation and Preliminaries}

We begin by establishing notations and preliminaries used throughout this paper. Let $q=2^{n}$ be a prime power and $X=\GF(2^{n})\cup \{\infty\}$, called the \emph{projective line}. We define 
\[\frac{a}{0}=\infty, \frac{a}{\infty}=0, \infty+a=a+\infty=\infty, a\infty=\infty a=\infty,\text{ and }\quad  \dfrac{a\infty+b}{c\infty+d}=\frac{a}{c},\]
and for any $a,b,c,d \in \GF(2^{n})$ with $ad-bc\neq 0$, we define a \emph{linear function} as follows:
\[\begin{array}{rccc}
f\colon &X                     &\longrightarrow& X                     \\
        & \rotatebox{90}{$\in$}&               & \rotatebox{90}{$\in$} \\
        & x                    & \longmapsto   & \dfrac{ax+b}{cx+d}.
\end{array}\]
The set of all such mappings on $X$ forms a group, called the \emph{projective special linear group} and is denoted by $\PSL(2,2^{n})$. Throughout this paper, we denote by $G$ the group $\PSL(2,2^{n})$. By \cite{4}, the unique linear function that maps $(A,B,C)$ to $(0,1,\infty)$ is given by
\[h_{A,B,C}=\dfrac{(x+A)(B+C)}{(x+C)(B+A)}.\]

\begin{defi}\label{defi:2.1 定義集}
We use the following notation throughout this paper\,:
\begin{itemize}
\item $k=13$: the block size we are considering in this paper,
\item $q=2^n$: a prime power with $n \geq 1$,
\item $G = \PSL(2,q)$: the projective special linear group,
\item $X = \GF(2^n) \cup \{\infty\}$: the projective line,
\item for $B \in \binom{X}{13}$, $G_B := \{g \in G \mid g(B) = B\}$: the stabilizer of $B$,
\item for $B \in \binom{X}{13}$, $G(B) := \{g(B) \mid g \in G\}$: the orbit containing $B$,
\item $\lambda_B$: parameter when a single orbit $G(B)$ forms a $3$-$(2^n+1,13,\lambda_B)$ design,
\item $m_{\lambda_B}$: total number of orbits that form $3$-$(2^n+1,13,\lambda_B)$ designs,
\item $J_d$: number of conjugacy classes of elements of order $d$ in $G$,
\item $N_G(H)$: normalizer of a subgroup $H$ in $G$,
\item $C_G(g_d)$: centralizer of an element $g\in G$ of order $d$,
\item $\omega \in \GF(2^n)$: a primitive cube root of unity, which exists when $n$ is even,
\item $F(13,a,b,d)$: number of $13$-subsets that remain invariant under elements of order $d$ when $n \equiv b \pmod{a}$,
\item $B(13,a,b,d)$: a function defined as $\frac{J_d}{|C_G(g_d)|} F(k,a,b,d)$ when $n \equiv b \pmod{a}$,
\item $\chi_{13}(g)$: number of $13$-subsets of $X$ fixed by $g \in G$,
\item $\phi(d)$: Euler's totient function,
\item $N_{13}$: total number of orbits of $13$-element subsets under the action of $G$,
\item for a subgroup $Q \subset G$, $\mathcal{B}_Q := \{B \in \binom{X}{13} \mid G_B \cong Q\}$,
\item for $g \in G$, $\mathcal{B}_Q^g := \{B \in \mathcal{B}_Q \mid g(B) = B\}$,
\item $O_d := \{g \in G \mid |g| = d\}$: set of elements of order $d$ in $G$,
\item $\mathcal{H} := \{H \subset G \mid H \cong Q\}$: set of subgroups of $G$ isomorphic to $Q$.
\end{itemize}
\end{defi}

\begin{lem}[{\cite{9}}]\label{lem:2.2 デザインの自己同型群}
The group $G$ is an automorphism group of the $3$-$(2^{n}+1,13,\lambda)$ design $(X,\mathcal{B})$ if and only if $\mathcal{B}$ is a union of orbits of $k$-subsets of $X$ under $G$.
\end{lem}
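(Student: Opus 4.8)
The plan is to establish the two implications separately; the only nontrivial input is the sharp $3$-transitivity of $G=\PSL(2,2^{n})$ on $X$, recalled in \S1.

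\emph{($\Rightarrow$)} Suppose $(X,\mathcal{B})$ is a $3$-$(2^{n}+1,13,\lambda)$ design of which $G$ is an automorphism group. By definition of an automorphism, every $g\in G$ satisfies $g(\mathcal{B})=\mathcal{B}$, so $\mathcal{B}\subseteq\binom{X}{13}$ is a $G$-invariant set of blocks. Since the $G$-orbits partition $\binom{X}{13}$ and $B\in\mathcal{B}$ forces $G(B)\subseteq\mathcal{B}$, we obtain $\mathcal{B}=\bigcup_{B\in\mathcal{B}}G(B)$, a union of orbits.

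\emph{($\Leftarrow$)} Suppose $\mathcal{B}=\bigcup_{i=1}^{s}G(B_{i})$. Each $g\in G$ fixes every orbit $G(B_{i})$ setwise, hence $g(\mathcal{B})=\mathcal{B}$, so $G\le\Aut(X,\mathcal{B})$; what remains is to verify that $(X,\mathcal{B})$ is a $3$-design. First I would treat a single orbit $G(B)$: given two $3$-subsets $T_{1},T_{2}\subseteq X$, sharp $3$-transitivity yields $g\in G$ with $g(T_{1})=T_{2}$, and then $B'\mapsto g(B')$ restricts to a bijection $\{B'\in G(B):T_{1}\subseteq B'\}\to\{B'\in G(B):T_{2}\subseteq B'\}$. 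Consequently the number of blocks of $G(B)$ through a $3$-subset is a constant $\lambda_{B}$, so $G(B)$ is a $3$-$(2^{n}+1,13,\lambda_{B})$ design; counting incident pairs $(T,B')$ with $|T|=3$, $T\subseteq B'\in G(B)$ in two ways, together with $|G|=(2^{n}+1)2^{n}(2^{n}-1)$ and the orbit--stabilizer theorem, gives $\lambda_{B}=\frac{13\cdot 12\cdot 11}{|G_{B}|}$ as in \S1. Summing over $i=1,\dots,s$, the number of blocks of $\mathcal{B}$ through a fixed $3$-subset equals $\sum_{i=1}^{s}\lambda_{B_{i}}$, independent of the subset; hence $(X,\mathcal{B})$ is a $3$-$(2^{n}+1,13,\lambda)$ design with $\lambda=\sum_{i=1}^{s}\lambda_{B_{i}}$.

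I do not expect any genuine obstacle here: the argument is elementary once sharp $3$-transitivity is available. The only point needing care is the standing convention that $\mathcal{B}$ is an honest set of blocks (so that the design is simple), under which ``$G$-invariant'' and ``union of $G$-orbits'' are literally the same condition and the $3$-design property is inherited orbit by orbit.
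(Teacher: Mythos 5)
Your argument is correct and complete: the forward direction is the trivial observation that a $G$-invariant block set is a union of orbits, and the backward direction correctly uses $3$-transitivity to show each orbit is itself a $3$-design (with the double count giving $\lambda_{B}=1716/|G_{B}|$, matching \S 1), so the union is a $3$-design with $\lambda=\sum_{i}\lambda_{B_{i}}$. The paper cites this lemma from the literature without proof, so there is nothing to compare against, but your write-up is the standard argument and I see no gap.
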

Since $G$ acts sharply $3$-transitively on $X$, the orbit of any $13$-subset $B$ of $X$ forms a simple $3$-$(2^n+1,13,\lambda_B)$ design. In particular, $G$ is also an automorphism group of this design. When considering $3$-designs for which $G$ is an automorphism group, the stabilizer $G_B$ becomes very important. As is well known, the following relation holds:
\begin{equation}\label{eq:2.1 軌道固定群定理}
|G|=|G_{B}||G(B)|.
\end{equation}
\begin{lem}[{\cite{1}}]\label{lem:2.3 置換指標}
The permutation character $\chi$ for this action of $G$ on $X$ is given in Table~\ref{Table:1 置換指標}, where $\phi$ is the Euler function.
\end{lem}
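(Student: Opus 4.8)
The plan is to reconstruct the table by computing the permutation character directly, since $\chi$ is simply the class function $\chi(g)=|\{x\in X:g(x)=x\}|$. It therefore suffices to pick one representative in each conjugacy class of $G=\PSL(2,2^{n})$, count its fixed points on the projective line, and record how many elements (equivalently, how many classes) of each order occur. Euler's function $\phi$ will enter only through this last count, not through the fixed-point values themselves; all of this is classical, so in practice one quotes \cite{1}, but here is the route I would take to verify it.

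First I would invoke Dickson's description of the conjugacy classes of $G$. Because $q=2^{n}$ is even we have $\SL(2,q)=\PGL(2,q)=\PSL(2,q)$, the group has order $q(q-1)(q+1)$, and $q-1$ and $q+1$ are both odd; consequently every nonidentity element is conjugate to exactly one of: a unipotent element, represented by $x\mapsto x+1$, of order $2$; a split semisimple element, represented by $x\mapsto ax$ with $a\in\GF(q)^{\times}$ of multiplicative order $d$ for some divisor $d>1$ of $q-1$; or a nonsplit semisimple element, whose eigenvalues lie in $\GF(q^{2})\setminus\GF(q)$ and have order $d$ for some divisor $d>1$ of $q+1$. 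Since $\gcd(q\pm1,2)=1$, the only involutions are the unipotent ones, so there is no overlap between the three families.

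Next I would read off the fixed points. A nonidentity linear fractional map $x\mapsto(ax+b)/(cx+d)$ fixes precisely the points of $X$ lying over the eigenlines of its $2\times2$ matrix that are rational over $\GF(q)$; concretely these are the roots in $\GF(q)\cup\{\infty\}$ of $cx^{2}+(a+d)x+b=0$, with $\infty$ a fixed point iff $c=0$. Hence the identity fixes all $q+1$ points; a unipotent involution, being a single nontrivial Jordan block, fixes exactly one point; a split element of order $d>1$ is diagonalizable over $\GF(q)$ with two distinct eigenlines and fixes exactly two points; and a nonsplit element has irreducible characteristic polynomial over $\GF(q)$ and fixes no point. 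This already determines $\chi$ as a class function. For the multiplicities I would then use that a cyclic split torus ($\cong\mathbb{Z}_{q-1}$) and a cyclic nonsplit torus ($\cong\mathbb{Z}_{q+1}$) each contain exactly $\phi(d)$ elements of order $d$ for every divisor $d$, that conjugation in $G$ fuses $g$ with $g^{-1}$ via the Weyl element, and that the centralizer orders are $q$, $q-1$, $q+1$ in the three cases; combining these gives both the element counts and the $\phi(d)/2$ conjugacy classes of each order $d>2$, completing the table.

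I do not expect a genuine obstacle here. The only point requiring care is the bookkeeping: aligning the split/nonsplit dichotomy correctly with "$d\mid q-1$" versus "$d\mid q+1$", and using the $q$-even coincidences ($\SL=\PGL=\PSL$, all involutions unipotent, $q\pm1$ odd) consistently so that no class is counted twice and none is missed. For that reason the efficient course — and the one adopted here — is to cite \cite{1} for the finished table.
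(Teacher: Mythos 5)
Your reconstruction is correct: the trichotomy unipotent / split semisimple / nonsplit semisimple, the fixed-point counts via eigenlines of $cx^{2}+(a+d)x+b$, and the $\phi(d)/2$ class count from fusion of $g$ with $g^{-1}$ in the dihedral normalizer of each torus together yield exactly Table~\ref{Table:1 置換指標}. The paper offers no proof of this lemma --- it is quoted verbatim from Dickson \cite{1} --- and your sketch is precisely the standard argument underlying that citation, so there is nothing to reconcile.
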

\begin{table}[h]
\centering
\caption{Permutation character of $G$}
\label{Table:1 置換指標}
\renewcommand{\arraystretch}{0.7}
\setlength{\tabcolsep}{5pt}{
\begin{tabular}{|c|c|c|c|c|}
\hline
Order of $g$ & 1 & 2 & $d \mid 2^n-1, d \neq 1$ & $d \mid 2^n+1$ \\
\hline
Order of the centralizer\quad $\left(|C_{G}(g_{d})|\right)$& $6\binom{2^n+1}{3}$ & $2^n$ & $(2^n-1)$ & $(2^n+1)$ \\
\hline
Number of conjugacy classes\quad $(J_{d})$ & 1 & 1 & $\phi(d)/2$ & $\phi(d)/2$ \\
\hline
the number of fixed points by $g$\quad $(\chi(g))$ & $2^n+1$ & 1 & 2 & 0 \\
\hline
\end{tabular}
}
\end{table}

\begin{lem}[{\cite{9}}]\label{lem:2.4 G_Bとの関わり}
Let $\mathcal{D}=(X,\mathcal{B})$ be a $t$-$(v,k,\lambda)$ design with $b$ blocks and $r$ blocks containing any point. Then the following equations hold:
\[(\alpha):bk=vr, \quad (\beta):\dbinom{v}{t}\lambda=b\dbinom{k}{t}.\]
\end{lem}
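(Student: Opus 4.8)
\textbf{Proof strategy for Lemma~\ref{lem:2.4 G_Bとの関わり}.}
The plan is to prove each identity by a standard double-counting argument, counting a suitably chosen set of incident pairs in two different ways. For $(\alpha)$, I would count the set of \emph{flags} $\{(p,C) \mid p \in X,\ C \in \mathcal{B},\ p \in C\}$, i.e.\ point-block incidences. Counting by blocks first: each of the $b$ blocks is a $k$-subset, so it contributes exactly $k$ incident points, giving $bk$ flags. Counting by points first: by hypothesis every point lies in exactly $r$ blocks, and there are $v = |X|$ points, giving $vr$ flags. Equating the two counts yields $bk = vr$.

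For $(\beta)$, I would count the set of pairs $\{(T,C) \mid T \in \binom{X}{t},\ C \in \mathcal{B},\ T \subseteq C\}$, i.e.\ incidences between $t$-subsets and blocks that contain them. Counting by $t$-subsets first: the defining property of a $t$-$(v,k,\lambda)$ design is that every $t$-subset of $X$ is contained in exactly $\lambda$ blocks, and there are $\binom{v}{t}$ such subsets, so this gives $\binom{v}{t}\lambda$ pairs. Counting by blocks first: each block $C$ is a $k$-set and hence contains exactly $\binom{k}{t}$ distinct $t$-subsets, and there are $b$ blocks, giving $b\binom{k}{t}$ pairs. Equating the two counts yields $\binom{v}{t}\lambda = b\binom{k}{t}$.

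There is no substantive obstacle here; both identities follow immediately once the correct incidence set is fixed. The only points requiring mild care are bookkeeping ones: I should note that the count $\binom{k}{t}$ of $t$-subsets per block relies on $t \le k$ (so that blocks actually contain $t$-subsets), and that the design being simple is not needed for the identities as stated, since $b$ counts blocks with multiplicity if repetitions were allowed. In our setting $t = 3$, $k = 13$, $v = 2^n+1$, and the design arises from a union of $G$-orbits, so these identities specialize directly and will be used to relate $\lambda$, the number of blocks $b$, and the replication number $r$ in the subsequent analysis.
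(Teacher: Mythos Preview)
Your double-counting argument is correct and is the standard textbook proof of these identities. Note, however, that the paper does not actually prove Lemma~\ref{lem:2.4 G_Bとの関わり}: it is stated with a citation to \cite{9} (Beth--Jungnickel--Lenz, \emph{Design Theory}) and used without proof, so there is no ``paper's own proof'' to compare against. Your argument matches what one finds in that reference.
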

An orbit $G(B)$ is a simple $3$-$(2^{n}+1,13,\lambda_B)$ design with $b=|G(B)|$ blocks. By substituting equation (\ref{eq:2.1 軌道固定群定理}), $b=|G(B)|$, and $|G|=q(q^{2}-1)$ into Lemma~\ref{lem:2.4 G_Bとの関わり}, we obtain:
\begin{equation}\label{eq:2.2 ラムダとG_B}
\lambda_B=\frac{1716}{|G_{B}|}.
\end{equation}
As $\lambda_B \in \mathbb{Z}_{>0}$, we have $|G_{B}| \mid 1716 = 2^2 \cdot 3 \cdot 11 \cdot 13$. The following possibilities can be listed as candidates for $|G_{B}|$:
\begin{equation}\label{eq:2.3 GB位数の候補}
|G_{B}| \in \{1, 2, 3, 4, 6, 11, 12, 13, 22, 26, 33, 39, 44, 52, 66, 78, 132, 143, 156, 286, 429, 572, 858, 1716\}.
\end{equation}
To determine which of these values of $|G_B|$ are actually realizable, we need to identify feasible subgroups $G_{B} \subset G$. The classification of subgroups of $G$ is given by the following lemma.
\begin{lem}[{\cite{1}}]\label{lem:2.5 dicson定理}
The subgroups of $G$ are classified as follows:\\
(a): Sylow $2$-subgroups $\mathbb{Z}_{2}^{j}$, where $j \le n$,\\
(b): cyclic groups $\mathbb{Z}_{d}$, where $d\mid 2^{n}\pm1$,\\
(c): dihedral groups of order $2d$, $D_{2d}$, where $d\mid 2^{n}\pm 1$,\\
(d): semidirect products of a Sylow $2$-subgroup $\mathbb{Z}_{2}^{j}$ and a cyclic group $\mathbb{Z}_{d}$, where $d\mid 2^{n}-1$ and $d\mid 2^{j}-1$,\\
(e): $\PSL(2,2^{h})$, where $h\mid n$ and,\\
(f): $A_{4}, A_{5}$, where $n$ is even.
\end{lem}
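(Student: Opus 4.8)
The plan is to prove Dickson's classification by exploiting the sharply $3$-transitive action of $G$ on $X$, together with the element-order data recorded in Lemma~\ref{lem:2.3 置換指標}, and to organise the argument around the fixed-point behaviour of a subgroup on $X$. Since $q=2^n$ is even, I would first isolate the features that simplify this case: every nontrivial $g\in G$ has order dividing $2$, $q-1$, or $q+1$ (read off from the rational canonical form of a lift to $\SL(2,\GF(q))$); an involution fixes exactly one point of $X$, an element of order $d>1$ with $d\mid q-1$ fixes exactly two, and one with $d\mid q+1$ fixes none; and, crucially, each Sylow $2$-subgroup is \emph{elementary abelian} of the form $\mathbb{Z}_2^{n}$, namely a translation group fixing a single point. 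I would then set up the two ``frames'' of the classification: the point stabiliser $G_\infty=\{x\mapsto ax+b\}\cong\mathbb{Z}_2^{n}\rtimes\mathbb{Z}_{q-1}$, and the dihedral normalisers $D_{2(q-1)}$ and $D_{2(q+1)}$ of the split and non-split maximal tori $\mathbb{Z}_{q-1}$ and $\mathbb{Z}_{q+1}$.

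The core is a trichotomy for a nontrivial $H\le G$. If $H$ fixes a point then $H\le G_\infty$, and I would classify the subgroups of the affine group $\mathbb{Z}_2^{n}\rtimes\mathbb{Z}_{q-1}$ directly: a multiplicative part $\langle a\rangle$ of order $d$ stabilises an additive $\GF(2)$-subspace $V\cong\mathbb{Z}_2^{j}$ of $\GF(q)^{+}$ precisely when $V$ is a $\GF(2)[a]$-module, whence $V$ is a $\GF(2^{e})$-space for $e\mid j$ and $d\mid 2^{e}-1\mid 2^{j}-1$; this produces types (a), (b) with $d\mid q-1$, and (d). If $H$ fixes no point but normalises a maximal torus --- equivalently stabilises an unordered pair of points of $X$ (split) or of $\mathbb{P}^1(\GF(q^2))\setminus X$ (non-split) --- then $H$ lies in $D_{2(q-1)}$ or $D_{2(q+1)}$, and since subgroups of dihedral groups are cyclic or dihedral I obtain (b) and (c) with $d\mid q\mp 1$.

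The remaining, and by far the most delicate, case is when $H$ fixes neither a point nor such a pair; here I must show $H$ is forced to be a subfield subgroup $\PSL(2,2^{h})$ with $h\mid n$ or one of $A_4,A_5$, and that nothing else occurs. The principal leverage is that the Sylow $2$-subgroups form a trivial-intersection family indexed by $X$ (each point is fixed by a unique Sylow $2$-subgroup, since $|X|=q+1$ is odd), so $G_\infty$ is strongly embedded; I would analyse a Sylow $2$-subgroup $S\le H$, its normaliser, and the dihedral groups $\langle s,t\rangle$ generated by pairs of involutions, counting involutions and their fusion to pin down $|H|$ and recover the subfield action on a subline $\mathbb{P}^1(\GF(2^h))$. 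The elementary-abelian Sylow structure immediately forbids $S_4$ (whose Sylow $2$-subgroup $D_8$ is non-abelian), which is why only $A_4$ and $A_5\cong\PSL(2,4)$ survive, and both require $\GF(4)\subseteq\GF(q)$, i.e. $n$ even, since they need a primitive cube root of unity (respectively the subfield $\GF(4)$) inside $\GF(q)$. I expect the main obstacle to be exactly this generation-and-counting step that converts ``$H$ stabilises no point and no pair'' into ``$H$ is conjugate to $\PSL(2,2^{h})$ or is exceptional'': it is the substantive content of Dickson's theorem, and I would either carry out a careful orbit-length and involution-count analysis on $X$ or invoke the classification of groups with a strongly embedded subgroup.
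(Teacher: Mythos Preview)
The paper does not prove this lemma at all: it is quoted with a citation to Dickson's book~\cite{1} and used as a black box throughout. There is therefore no ``paper's own proof'' to compare against, and your proposal already goes well beyond what the paper provides.

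As a sketch of Dickson's theorem for $\PSL(2,2^n)$, your outline is sound and follows a standard route. The trichotomy (fixes a point of $X$ / stabilises an unordered pair of points over $\GF(q)$ or $\GF(q^2)$ / neither) is the right organisation, and your identification of the point stabiliser $G_\infty\cong\mathbb{Z}_2^n\rtimes\mathbb{Z}_{q-1}$, the dihedral torus normalisers $D_{2(q\mp1)}$, the elementary-abelian and TI nature of the Sylow $2$-subgroups, and the exclusion of $S_4$ via its non-abelian Sylow $2$-structure are all accurate. Your module-theoretic derivation of the constraint $d\mid 2^j-1$ in type~(d) is the correct mechanism. You are also appropriately candid that the third case---showing that a subgroup with no fixed point and no invariant pair must be a subfield group $\PSL(2,2^h)$ or one of $A_4,A_5$---is where the real work lies, and that it requires either Dickson's original involution-and-orbit counting or an appeal to strongly-embedded-subgroup theory; that step is genuinely nontrivial and your sketch does not (and does not claim to) fill it in. Since the paper only needs the \emph{statement} of the classification to list the possible stabilisers $G_B$, citing the result, as the paper does, is entirely adequate for its purposes.
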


\begin{lem}[{\cite{4}}]\label{lem:2.6 13元集合と固定点}
Let $\chi_{13}(g)$ denote the number of $13$-subsets of $X$ fixed by $g \in G$. Then $\chi_1(g)=\chi(g)$. If $g \in G$ has order $d > 1$, then $g$ has $a=\chi(g)\leq 2$ fixed points, $b=\frac{2^n+1-a}{d}$ cycles of length $d$, and no other cycles. Thus, $\chi_{13}(g)=\binom{a}{r}\binom{b}{x}$, where $13=dx+r$ with $0\leq r < d$.
\end{lem}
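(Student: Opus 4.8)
The plan is to determine the cycle decomposition of a nontrivial element of $G$ acting on $X$ and then count the invariant $13$-subsets by selecting whole orbits. The equality $\chi_{1}(g)=\chi(g)$ is immediate, since $g(\{x\})=\{x\}$ if and only if $x$ is a fixed point of $g$, so the number of $g$-invariant $1$-subsets equals the number of fixed points of $g$, which is $\chi(g)$ by Lemma~\ref{lem:2.3 置換指標}.

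Now fix $g$ with $|g|=d>1$; the key step is the cycle type. I would use the standard trichotomy for non-identity elements of $\PSL(2,2^{n})$, which can be extracted from Lemma~\ref{lem:2.5 dicson定理}: the cyclic group $\langle g\rangle$ either has $2$-power order, hence order $2$ because Sylow $2$-subgroups of $\PSL(2,2^{n})$ are elementary abelian, or has order dividing $2^{n}-1$ (a split torus), or has order dividing $2^{n}+1$ (a nonsplit torus). If $g$ is an involution it has a unique fixed point on $X$ and the remaining $2^{n}$ points split into $2$-cycles. If $d\mid 2^{n}-1$, then $g$ is conjugate to $x\mapsto\alpha x$ with $\alpha$ of multiplicative order $d$; it fixes $0$ and $\infty$ and permutes $\GF(2^{n})^{\times}$ in cosets of $\langle\alpha\rangle$, all of length $d$. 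If $d\mid 2^{n}+1$, then $g$ has no fixed point on $X$, and since every nontrivial power of $g$ is again nonsplit semisimple, hence also fixed-point-free, $\langle g\rangle$ acts semiregularly and all orbits have length $d$. Thus in every case $g$ has $a$ fixed points with $a\in\{1,2,0\}$ and all other orbits of length exactly $d$; comparing with Lemma~\ref{lem:2.3 置換指標} gives $a=\chi(g)\le 2$, and since the orbits partition $X$ we have $a+db=2^{n}+1$, i.e., $b=(2^{n}+1-a)/d$.

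Finally, a $13$-subset $B$ with $g(B)=B$ is a union of $\langle g\rangle$-orbits, so it is specified by choosing $r'$ of the $a$ fixed points and $x'$ of the $b$ $d$-cycles with $r'+dx'=13$, $0\le r'\le a$, $0\le x'\le b$. Writing $13=dx+r$ with $0\le r<d$, the congruence $r'\equiv 13\pmod d$ together with $0\le r'\le a\le 2$ forces $r'=r$ and $x'=x$ (for $d\ge 3$ this is clear since $a<d$; for $d=2$ one gets $r=1$, $x=6$ directly). Hence $\chi_{13}(g)=\binom{a}{r}\binom{b}{x}$, where $\binom{a}{r}$ vanishes automatically whenever $r>a$.

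There is essentially no obstacle here: the whole argument rests on the trichotomy of elements of $\PSL(2,2^{n})$, which is classical and already implicit in Lemmas~\ref{lem:2.3 置換指標} and~\ref{lem:2.5 dicson定理}, and everything else is elementary counting. The only point that would need slightly more care in a fully self-contained treatment is ruling out orbits of intermediate length $1<e<d$: a prime-order power of $g$ fixing such an orbit would lie in a point stabilizer $\mathbb{Z}_{2}^{n}\rtimes\mathbb{Z}_{2^{n}-1}$ of $G$ — in which a direct computation shows every element has all of its non-fixed cycles of length equal to its own order — or in the reflection coset of the dihedral normalizer $D_{2(2^{n}-1)}$ of a split torus, which would force $g$ to be an involution; either way $e=d$.
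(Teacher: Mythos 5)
Your proof is correct: the trichotomy of non-identity elements (involution, order dividing $2^n-1$, order dividing $2^n+1$), the resulting cycle structure, and the orbit-union counting with the forced choice $r'=r$ because $a\le 2$ all check out, and the formula $\chi_{13}(g)=\binom{a}{r}\binom{b}{x}$ follows. The paper itself imports this lemma from \cite{4} without proof, so there is nothing to compare against; your argument is the standard one and would serve as a self-contained justification.
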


\begin{prop}\label{prop:2.7 G_Bの元とnの条件}
For any $B\in \binom{X}{13}$, if $g \in G_B$, then $|g| \in \{1,2,3,11,13\}$. Furthermore, if $|g|=3$ then $n$ is even. If $|g|=11$ then $n \equiv 0 \pmod{10}$. If $|g|=13$ then $n \equiv 0 \pmod{6}$.
\end{prop}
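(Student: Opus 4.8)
The plan is to read the cycle structure of any $g\in G_B$ from Lemma~\ref{lem:2.6 13元集合と固定点}, convert the condition $g(B)=B$ into a short Diophantine restriction on $|g|$, and then eliminate the impossible orders by combining the subgroup structure of $G$ (Lemma~\ref{lem:2.5 dicson定理}) with the permutation character in Table~\ref{Table:1 置換指標}.

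Fix $g\in G_B$ and set $d=|g|$; the case $d=1$ is trivial, so assume $d>1$. By Lemma~\ref{lem:2.6 13元集合と固定点}, $g$ has $a=\chi(g)\le 2$ fixed points on $X$ and $b=(2^n+1-a)/d$ cycles of length $d$, and nothing else. Since the $13$-set $B$ is $g$-invariant it is a union of $\langle g\rangle$-orbits, so $13=r+dx$ where $r$ of the fixed points and $x$ of the $d$-cycles lie in $B$; equivalently $\chi_{13}(g)=\binom{a}{r}\binom{b}{x}\ge 1$ forces $0\le r\le a\le 2$ and $x\le b$. Because $r\le 2<13$ we have $x\ge 1$, hence $d\le 13$ and $d\mid 13-r$ for some $r\in\{0,1,2\}$; that is, $d$ divides one of $11,12,13$, which narrows $d$ to $\{2,3,4,6,11,12,13\}$.

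Now I would use the structural fact that $G=\PSL(2,2^n)$ has no element of order $4$, $6$, or $12$: by Lemma~\ref{lem:2.5 dicson定理}, a nontrivial $2$-element lies in an elementary abelian Sylow $2$-subgroup, hence has order $2$, while an element of odd order lies in a cyclic subgroup of order dividing $2^n-1$ or $2^n+1$, both odd; so every element order in $G$ is $1$, $2$, or an odd divisor of $2^n\pm 1$. This discards $d\in\{4,6,12\}$ and gives $|g|\in\{1,2,3,11,13\}$.

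It remains to pin down the congruences on $n$, and here the common device is that if the order $d$ of $g$ divides $2^n+1$ then $\chi(g)=0$ by Table~\ref{Table:1 置換指標}, so for such a $g$ the equation $13=dx$ would have to have a solution. If $|g|=3$ this fails, so $3\nmid 2^n+1$; as $g$ lies in a cyclic group of order dividing $2^n-1$ or $2^n+1$, we get $3\mid 2^n-1$, i.e.\ $n$ even. If $|g|=11$, then $11\mid 2^n\pm 1$, hence $11\mid 2^{2n}-1$, and since the multiplicative order of $2$ modulo $11$ is $10$ we obtain $5\mid n$; were $n$ odd we would have $n\equiv 5\pmod{10}$, whence $2^n\equiv -1\pmod{11}$, $11\mid 2^n+1$, and again the insoluble $13=11x$ — so $n$ is even and $n\equiv 0\pmod{10}$. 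If $|g|=13$, then $13\mid 2^{2n}-1$ and the order of $2$ modulo $13$ is $12$, so $12\mid 2n$, i.e.\ $n\equiv 0\pmod 6$. The only step that is more than a mechanical divisibility check is the observation that $G$ contains no elements of order $4$, $6$, or $12$, which is precisely where the characteristic $2$ of the underlying field enters; everything else follows once Lemmas~\ref{lem:2.3 置換指標} and~\ref{lem:2.6 13元集合と固定点} are in hand.
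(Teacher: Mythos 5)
Your argument is correct and follows the paper's own route: bound $|g|$ via the cycle decomposition of Lemma~2.6, discard the even orders other than $2$ using the structure of element orders in $G$, and extract the congruences on $n$ from the fixed-point counts in Table~1. The only difference is cosmetic: you obtain the candidate list from the divisibility $d \mid 13-r$ with $0\le r\le 2$, whereas the paper combines $|g|\le 13$ with $|G_B|\mid 1716$; both yield the same set $\{1,2,3,4,6,11,12,13\}$ before the even orders are pruned, and your contrapositive phrasing of the order-$3$ and order-$11$ cases is equivalent to the paper's direct one.
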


\begin{proof}
Let $g \in G_B$. By Lemma~\ref{lem:2.6 13元集合と固定点}, we have $|g| \leq 13$. From Table~\ref{Table:1 置換指標} the only elements of even order in $G$ have order $2$. Since $|G_B| \mid 1716=2^2\cdot 3\cdot 11\cdot 13$ by equation (\ref{eq:2.2 ラムダとG_B}), we have $|g| \in \{1,2,3,11,13\}$. If $G_B$ contains an element of order $3$, then by Lemma~\ref{lem:2.6 13元集合と固定点} this element must have a fixed point in $B$. Table~\ref{Table:1 置換指標} shows that such an element must have exactly two fixed points implying $3 \mid 2^n-1$ which occurs only when $n$ is even. For an element of order $11$, it must have two fixed points in $B$ requiring $11 \mid 2^n-1$ which holds if and only if $n \equiv 0 \pmod{10}$. For elements of order $13$, both cases with zero or two fixed points are possible corresponding to $13 \mid 2^n \pm 1$ which occurs if and only if $n \equiv 0 \pmod{6}$.
\end{proof}
By Proposition~\ref{prop:2.7 G_Bの元とnの条件}, the possible orders of elements in $G_B$, their fixed points, and conditions on $n$ are summarized in Table~\ref{table:2 元の位数と固定点数とn条件}.

\begin{table}[h]
\centering
\caption{Order of elements in $G_B$, related fixed points, and conditions on $n$}
\label{table:2 元の位数と固定点数とn条件}
\renewcommand{\arraystretch}{0.7}
\setlength{\tabcolsep}{5pt} 
\begin{tabular}{|c|c|c|c|c|c|c|}
\hline
Order $|g|$ & 1 & 2 & 3 & 11 & \multicolumn{2}{c|}{13} \\
\hline
Fixed points & $2^n+1$ & 1 & 2 & 2 & 0 & 2 \\
\hline
Condition & \multirow{2}{*}{Always} & \multirow{2}{*}{Always} & $n \equiv 0$ & $n \equiv 0$ & $n \equiv 0$ & $n \equiv 6$ \\
on $n$ & & & (mod 2) & (mod 10) & (mod 12) & (mod 12) \\
\hline
\end{tabular}
\end{table}
By Lemmas~\ref{lem:2.5 dicson定理}, \ref{lem:2.6 13元集合と固定点}, Proposition~\ref{prop:2.7 G_Bの元とnの条件}, and (\ref{eq:2.3 GB位数の候補}), we can determine that 
\begin{equation}\label{eq:2.4 GB位数候補2}
|G_B| \leq 26, \quad |G_B| \notin \{ 33, 39, 44, 52, 66, 78, 132, 143, 156, 286, 429, 572, 858, 1716\}.  
\end{equation}
By (\ref{eq:2.4 GB位数候補2}) and Lemma~\ref{lem:2.5 dicson定理}, the potential values of $|G_B|$, the structure of $G_B$, and the corresponding $\lambda_B$ values are presented in Table~\ref{table:3 G_Bの候補その1}.

\begin{table}[h]
\centering
\caption{Candidates for $|G_B|$, $G_B$, and corresponding $\lambda_B$}
\label{table:3 G_Bの候補その1}
\renewcommand{\arraystretch}{0.7}
\begin{tabular}{|c|c|c|c|c|c|c|c|c|c|c|}
\hline
$|G_B|$ & 1 & 2 & 3 & 4 & 6 & 11 & 12 & 13 & 22 & 26 \\
\hline
$G_B$ & Id & $\mathbb{Z}_2$ & $\mathbb{Z}_3$ & $\mathbb{Z}_2^2$ & $D_6$ & $\mathbb{Z}_{11}$ & $A_4$ & $\mathbb{Z}_{13}$ & $D_{22}$ & $D_{26}$ \\
\hline
$\lambda_B$ & 1716 & 858 & 572 & 429 & 286 & 156 & 143 & 132 & 78 & 66 \\
\hline
\end{tabular}
\end{table}
In $\S 3$, we will demonstrate that $|G_B| \notin \{6, 11, 13\}$, further refining our understanding of the possible stabilizer groups. $\S 4$ establishes the total number of orbits of 13-element subsets. Finally, in $\S 5$, we determine $m_{\lambda_B}$ for each realizable value of $\lambda_B$. In $\S 6$, we present a conclusion that summarizes our results and states the main theorem.

\section{Order of Stabilizers of $13$-subsets}

The candidates for $|G_B|$, $G_B$ and $\lambda_B$ are determined as shown in Table~\ref{table:3 G_Bの候補その1}. In this section, we prove that $|G_B| \notin \{6, 11, 13\}$. Additionally, we determine the exact values of $m_{66}$ and $m_{78}$.

\begin{lem}[{\cite{6}}]\label{lem:3.1 軌道の一意性}
Suppose $p$ is an odd prime such that $p\mid 2^{n}-1$. If $p \leq k \leq p+2$, then there exists a unique orbit $G(B)$ such that $p \mid |G_B|$, where $B$ is a $k$-subset of $X$. Moreover, if $k$ is odd, then $2 \mid |G_B|$.
\end{lem}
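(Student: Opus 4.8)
The plan is to determine, for any $B$ with $p\mid|G_B|$, exactly which $k$-subsets can occur, then to merge all of them into a single $G$-orbit using the normalizer of a subgroup of order $p$, and finally to exhibit an involution in $G_B$ when $k$ is odd. First I would argue the shape of $B$. If $p\mid|G_B|$ then by Cauchy's theorem $G_B$ contains an element $g$ of order $p$; since $p$ is odd and $p\mid 2^n-1$, Lemma~\ref{lem:2.6 13元集合と固定点} (see also Table~\ref{Table:1 置換指標}) shows $g$ has exactly two fixed points $\alpha,\beta$ and partitions the remaining $2^n-1$ points into $(2^n-1)/p$ cycles of length $p$. A $g$-invariant $B$ is a union of $\langle g\rangle$-orbits, and since $p\le k\le p+2<2p$ this forces $B$ to consist of exactly one $p$-cycle $O$ of $g$ together with $r:=k-p\in\{0,1,2\}$ of the points $\alpha,\beta$. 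Conversely, an element $g$ of order $p$ exists because $p\mid 2^n-1$, and for any such $B$ we have $\langle g\rangle\le G_B$, so $p\mid|G_B|$; this settles existence.

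Next I would prove uniqueness of the orbit. One checks that all subgroups $\mathbb{Z}_p\le G$ are conjugate: by the subgroup classification (Lemma~\ref{lem:2.5 dicson定理}) each lies in a cyclic subgroup $T\cong\mathbb{Z}_{2^n-1}$, which contains a unique $\mathbb{Z}_p$, and a count against $|N_G(T)|=2(2^n-1)$ (so $T$ is self-centralizing with dihedral normalizer) shows a single conjugacy class. Given two candidate blocks $B_1,B_2$ with order-$p$ elements $g_i\in G_{B_i}$, conjugate $B_1$ so that $\langle g_1\rangle=\langle g_2\rangle=:H$ with common fixed points $\alpha,\beta$, and write $B_i=O_i\cup F_i$ with $F_i\subseteq\{\alpha,\beta\}$, $|F_i|=r$. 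Modelling $H\le T=\{x\mapsto\lambda x\}$ with $\alpha=0,\beta=\infty$: $T$ is regular on $X\setminus\{0,\infty\}$, hence transitive on the $p$-cycles of $H$ while fixing $\alpha$ and $\beta$, and the reflection $w\colon x\mapsto 1/x$ lies in $N_G(T)=D_{2(2^n-1)}$, normalizes $H$, swaps $\alpha\leftrightarrow\beta$, and permutes the $p$-cycles of $H$. For $r\in\{0,2\}$ a suitable $t\in T$ already sends $B_1$ to $B_2$; for $r=1$, if the two chosen fixed points agree some $t\in T$ works, and if they differ then $t'w$ for a suitable $t'\in T$ does. Hence all candidate blocks lie in one $G$-orbit.

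Finally, suppose $k$ is odd. Then $r=k-p$ is even, so $B$ equals $O$ or $O\cup\{\alpha,\beta\}$. Each reflection $s_\mu\colon x\mapsto\mu/x$ in $N_G(T)$ has a unique fixed point $\sqrt{\mu}\in\GF(2^n)^{*}$ (squaring is a bijection in characteristic $2$) and swaps $0\leftrightarrow\infty$. Choosing $\mu$ with $\sqrt{\mu}\in O$, which is possible since $\{\sqrt{\mu}:\mu\in\GF(2^n)^{*}\}=\GF(2^n)^{*}$, the set $s_\mu(O)=H\cdot s_\mu(x_0)$ is an $H$-orbit containing $\sqrt{\mu}\in O$, hence $s_\mu(O)=O$; combined with $s_\mu(\{\alpha,\beta\})=\{\alpha,\beta\}$ this gives $s_\mu(B)=B$. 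Thus $s_\mu\in G_B$ is an involution and $2\mid|G_B|$.

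I expect the first and third steps to be essentially bookkeeping. The main obstacle is the uniqueness step: showing all subgroups $\mathbb{Z}_p\le G$ are conjugate, and then that the dihedral normalizer $N_G(H)\cong D_{2(2^n-1)}$ acts on $X$ transitively enough — on the $p$-cycles of $H$, and interchanging the two fixed points — to collapse every candidate block (in particular the two natural choices arising when $r=1$) into one orbit. This requires the explicit description of the split torus and its reflections acting on the projective line, together with the fact that an element of order $p\ge 3$ is self-centralizing into a single cyclic subgroup.
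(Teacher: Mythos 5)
Your argument is correct, and it is worth noting that the paper itself offers no proof to compare against: Lemma~\ref{lem:3.1 軌道の一意性} is imported verbatim from \cite{6}, so what you have written is a self-contained derivation of a cited result. All three steps check out. The shape analysis is right because $p\le k\le p+2<2p$ for an odd prime forces exactly one $p$-cycle plus $r=k-p\le 2$ fixed points; the uniqueness step correctly reduces to the action of the split torus $T=\{x\mapsto\lambda x\}$ and the reflection $x\mapsto 1/x$ on the $\langle g\rangle$-cosets of $\GF(2^n)^{\times}$ and on $\{0,\infty\}$; and the involution $s_\mu\colon x\mapsto\mu/x$ with $\mu=x_0^2$, $x_0\in O$, does fix $B$ setwise since it normalizes $H$, fixes $x_0$, and preserves $\{0,\infty\}$ (and $\det\begin{psmallmatrix}0&\mu\\1&0\end{psmallmatrix}=\mu$ is a square in characteristic $2$, so $s_\mu\in G$). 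The only spot I would tighten is the conjugacy of all subgroups $\mathbb{Z}_p\le G$: your counting argument via $|N_G(T)|=2(2^n-1)$ works but is the least transparent part; a cleaner route, consistent with the paper's own toolkit, is that an order-$p$ element has exactly two fixed points (Table~\ref{Table:1 置換指標}), so by $2$-transitivity one may move its fixed-point pair to $(0,\infty)$, placing it inside the cyclic torus $T$, which has a unique subgroup of order $p$. With that substitution the proof is complete and arguably more elementary than relying on the full normalizer structure.
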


\begin{prop}\label{prop:3.2 m78=1}
If $11 \mid |G_B|$, then $|G_B|=22$ and
\[
m_{78} = \begin{cases}
1 & \text{if }n \equiv 0 \pmod{10},\\
0 & \text{if }n \not\equiv 0 \pmod{10}.
\end{cases}
\]
\end{prop}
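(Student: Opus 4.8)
The plan is to reduce everything to Lemma~\ref{lem:3.1 軌道の一意性}, applied with the odd prime $p=11$ and block size $k=13$, after first extracting the congruence condition on $n$ from Proposition~\ref{prop:2.7 G_Bの元とnの条件}.

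First I would observe that if $11 \mid |G_B|$ for some $B\in\binom{X}{13}$, then $G_B$ contains an element of order $11$, so Proposition~\ref{prop:2.7 G_Bの元とnの条件} forces $n\equiv 0\pmod{10}$. Consequently, when $n\not\equiv 0\pmod{10}$ there is no $13$-subset with $11\mid|G_B|$; in particular no orbit has $|G_B|=22$, and since Table~\ref{table:3 G_Bの候補その1} identifies $\lambda_B=78$ precisely with $|G_B|=22$, this yields $m_{78}=0$.

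Next, assuming $n\equiv 0\pmod{10}$, the multiplicative order of $2$ modulo $11$ being $10$ gives $11\mid 2^n-1$, so Lemma~\ref{lem:3.1 軌道の一意性} applies: with $11\le 13\le 11+2$ there is a \emph{unique} orbit $G(B)$ satisfying $11\mid|G_B|$, and since $k=13$ is odd we additionally get $2\mid|G_B|$, hence $22\mid|G_B|$. Comparing this with the bound $|G_B|\le 26$ of (\ref{eq:2.4 GB位数候補2}) and the divisibility $|G_B|\mid 1716$ coming from (\ref{eq:2.2 ラムダとG_B}), the only admissible value is $|G_B|=22$, which establishes the first claim. Then $\lambda_B=1716/22=78$, and because every orbit producing parameter $78$ has $|G_B|=22$ and therefore $11\mid|G_B|$, the uniqueness from Lemma~\ref{lem:3.1 軌道の一意性} forces $m_{78}=1$.

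I do not expect a genuine obstacle here: the substantive input is entirely contained in Lemma~\ref{lem:3.1 軌道の一意性} (in particular, its second clause is exactly what rules out the $\mathbb{Z}_{11}$ candidate), and the remainder is bookkeeping against the candidate list (\ref{eq:2.3 GB位数の候補}) and Table~\ref{table:3 G_Bの候補その1}. The one point to handle with a little care is the final identification of $m_{78}$ with the number of orbits of stabilizer order $22$, which relies on the map $|G_B|\mapsto\lambda_B=1716/|G_B|$ being injective on the candidate set, so that the conditions ``$\lambda_B=78$'' and ``$|G_B|=22$'' are interchangeable.
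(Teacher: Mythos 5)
Your proposal is correct and follows essentially the same route as the paper: the case $n\not\equiv 0\pmod{10}$ is dispatched by the nonexistence of order-$11$ elements, and the case $n\equiv 0\pmod{10}$ is handled by Lemma~\ref{lem:3.1 軌道の一意性} with $p=11$, $k=13$, whose second clause supplies $2\mid|G_B|$ and whose uniqueness clause gives $m_{78}=1$. The only cosmetic difference is that you pin down $|G_B|=22$ via $22\mid|G_B|$ together with the bound $|G_B|\le 26$, whereas the paper reads it off from the candidate list in Table~\ref{table:3 G_Bの候補その1}; these are the same bookkeeping.
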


\begin{proof}
Recall that $m_{78}$ represents the number of orbits $G(B)$ of 3-$(2^{n}+1,13,\lambda_{78})$ designs. By Table~\ref{table:2 元の位数と固定点数とn条件}, the existence of an element of order $11$ requires $n \equiv 0 \pmod{10}$. If $n \not\equiv 0 \pmod{10}$, then no element of order $11$ exists in $G$. Consequently, no such element can exist in any subgroup $G_B \subset G$. This implies that $|G_B| \neq 22$ in this case, thus $m_{78} = 0$ immediately follows. Therefore, in the subsequent proof, we consider only the case where $n \equiv 0 \pmod{10}$. To prove that if $11\mid |G_{B}|$, then $|G_{B}|=22$, it suffices to show that $2\mid |G_{B}|$ by Table~\ref{table:3 G_Bの候補その1}. 

Let $11 \mid |G_B|$. Then $G_B$ contains an element of order $11$. By Table~\ref{Table:1 置換指標}, we have $11 \mid 2^n-1$. Since $k=13$, it follows from Lemma~\ref{lem:3.1 軌道の一意性} that $2 \mid |G_B|$ and $G(B)$ is the unique orbit such that $11 \mid |G_B|$, hence $m_{78}=1$.
\end{proof}

\begin{lem}[{\cite{5}}]\label{lem:3.3 位数2の元の必要十分条件}
Suppose $g(x)=\frac{ax+b}{cx+d}\in G$, then $|g|=2$ if and only if $a=d$ and $(b,c)\neq (0,0)$.
\end{lem}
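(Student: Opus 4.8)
The plan is to lift $g$ to a $2\times 2$ matrix and translate the condition $|g|=2$ into a purely matrix-theoretic statement, exploiting characteristic~$2$ throughout. Associate to $g(x)=\frac{ax+b}{cx+d}$ the matrix $M=\left(\begin{smallmatrix} a & b \\ c & d \end{smallmatrix}\right)$, which is invertible since $\det M = ad+bc \neq 0$ (recall $ad-bc=ad+bc$ here). Composition of linear functions corresponds to matrix multiplication, and two invertible matrices induce the same element of $G$ exactly when they differ by a nonzero scalar; in particular $g=\mathrm{id}$ on $X$ if and only if $M$ is a scalar matrix. Hence $|g|=2$ is equivalent to the assertion that $M$ is not scalar but $M^2$ is scalar.

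Next I would simply compute $M^2=\left(\begin{smallmatrix} a^2+bc & b(a+d) \\ c(a+d) & bc+d^2 \end{smallmatrix}\right)$, where the cross terms $ab+bd$ and $ac+cd$ have been factored as $b(a+d)$ and $c(a+d)$. For $M^2$ to be scalar the two diagonal entries must agree, i.e.\ $a^2=d^2$; since the Frobenius map $x\mapsto x^2$ is injective on $\GF(2^n)$ (equivalently $(a+d)^2=0\Rightarrow a+d=0$), this forces $a=d$. Conversely, once $a=d$ we have $a+d=0$, so the off-diagonal entries of $M^2$ vanish automatically and $M^2=(a^2+bc)I$ with $a^2+bc=\det M\neq 0$, a nonzero scalar matrix. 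Therefore $g^2=\mathrm{id}$ if and only if $a=d$. Finally, under the assumption $a=d$ the matrix $M$ is scalar precisely when $b=c=0$, so $g\neq\mathrm{id}$ is exactly the condition $(b,c)\neq(0,0)$. Combining the two conditions yields $|g|=2\iff a=d \text{ and } (b,c)\neq(0,0)$.

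I do not expect a genuine obstacle in this argument; the only points needing care are bookkeeping in characteristic $2$ (so that $a-d$ may be replaced by $a+d$ and $a^2=d^2$ really does give $a=d$), the identification of $G$ with $2\times 2$ invertible matrices modulo scalars so that ``order $2$'' literally reads ``$M^2$ scalar, $M$ not scalar'', and not forgetting to exclude the degenerate case $g=\mathrm{id}$, which is precisely why the clause $(b,c)\neq(0,0)$ is present. One could alternatively avoid matrices entirely and solve $g(g(x))=x$ for all $x\in X$ directly using the conventions for $\infty$, but that forces an awkward case split at $x=\infty$ and at the pole $cx+d=0$, so the matrix computation is the cleaner route.
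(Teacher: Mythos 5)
Your proof is correct: lifting $g$ to the matrix $M$, computing $M^2$, and using injectivity of the Frobenius map to get $a=d$ from $a^2=d^2$, together with the observation that $M$ is scalar iff $b=c=0$ once $a=d$, is a complete argument. The paper itself states this lemma as a citation from \cite{5} and gives no proof, so there is nothing internal to compare against; your matrix computation is the standard argument for this fact.
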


\begin{remark}\label{rem:3.4}
In the following Proposition~\ref{prop:3.5 m66=1}, we note that according to Table~\ref{table:2 元の位数と固定点数とn条件}, elements of order $13$ in $G$ can occur in two distinct situations: when $n \equiv 0 \pmod{12}$, in which case $13 \mid 2^n-1$ and such elements have exactly $2$ fixed points; or when $n \equiv 6 \pmod{12}$, in which case $13 \mid 2^n+1$ and such elements have no fixed points.

For the case where $n \equiv 0 \pmod{12}$, we could apply Lemma~\ref{lem:3.1 軌道の一意性} to establish our result, similar to the approach used in Proposition~\ref{prop:3.2 m78=1}. However, this lemma is not applicable when $n \equiv 6 \pmod{12}$.

Therefore, the proof of Proposition~\ref{prop:3.5 m66=1} is presented in a manner that encompasses both cases simultaneously, regardless of whether the elements of order $13$ have fixed points or not.
\end{remark}

\begin{prop}\label{prop:3.5 m66=1}
If $13 \mid |G_B|$, then $|G_B|=26$ and
\[
m_{66} = \begin{cases}
1 & \text{if }n \equiv 0 \pmod{6},\\
0 & \text{if }n \not\equiv 0 \pmod{6}.
\end{cases}
\]
\end{prop}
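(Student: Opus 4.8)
The plan is to establish both assertions at once by double-counting the incidence set $\mathcal{P} := \{(g,B) : g \in G,\ |g|=13,\ g \in G_B\}$; as Remark~\ref{rem:3.4} indicates, this treats the two possibilities $13 \mid 2^n-1$ (i.e. $n \equiv 0 \pmod{12}$) and $13 \mid 2^n+1$ (i.e. $n \equiv 6 \pmod{12}$) uniformly, without caring whether an element of order $13$ has fixed points. First I would dispose of the trivial case: if $n \not\equiv 0 \pmod 6$ then $13 \nmid 2^n\pm 1$, so by Table~\ref{table:2 元の位数と固定点数とn条件} no element of order $13$ lies in $G$, hence in no $G_B$; in particular $|G_B| = 26$ (which would force a subgroup $\cong D_{26}$ containing such an element) never occurs, giving $m_{66}=0$, and the first assertion holds vacuously. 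Assume henceforth $n \equiv 0 \pmod 6$.

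For the first count, fix $g$ with $|g| = 13$. Writing $13 = 13\cdot 1 + 0$ in Lemma~\ref{lem:2.6 13元集合と固定点} gives $\chi_{13}(g) = \binom{a}{0}\binom{b}{1} = b$, where $a = \chi(g)$ and $b = (2^n+1-a)/13$. By Table~\ref{Table:1 置換指標}, if $13 \mid 2^n-1$ then $a = 2$ and $|C_G(g)| = 2^n-1$, while if $13 \mid 2^n+1$ then $a = 0$ and $|C_G(g)| = 2^n+1$; either way $b = |C_G(g)|/13$ and $J_{13} = \phi(13)/2 = 6$. Since $|O_{13}| = J_{13}\,|G|/|C_G(g)|$ and every element of order $13$ fixes exactly $b$ of the $13$-subsets of $X$, summing $\chi_{13}$ over $O_{13}$ gives
\[
|\mathcal{P}| \;=\; \sum_{|g|=13}\chi_{13}(g) \;=\; |O_{13}|\cdot b \;=\; \frac{6\,|G|}{|C_G(g)|}\cdot\frac{|C_G(g)|}{13} \;=\; \frac{6\,|G|}{13}.
\]

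For the second count, group $\mathcal{P}$ by its second coordinate. A block $B$ contributes nothing unless $13 \mid |G_B|$, and then Table~\ref{table:3 G_Bの候補その1} leaves only $G_B \cong \mathbb{Z}_{13}$ or $G_B \cong D_{26}$; in each, the Sylow $13$-subgroup is cyclic of order $13$ and normal (index at most $2$), hence unique, so $G_B$ contains exactly $\phi(13)=12$ elements of order $13$. Therefore $|\mathcal{P}| = 12\cdot\#\{B \in \binom{X}{13} : 13 \mid |G_B|\}$, and comparison with the first count yields $\#\{B : 13 \mid |G_B|\} = |G|/26$. Now $\{B : 13 \mid |G_B|\}$ is a disjoint union of $G$-orbits, and an orbit whose stabilizer has order $s \in \{13,26\}$ has size $|G|/s$; hence $|G|/26 = \sum_i |G|/|G_{B_i}|$, i.e. $\sum_i 1/|G_{B_i}| = 1/26$ with each term in $\{1/13,\,1/26\}$, which forces a single orbit with $|G_{B_i}| = 26$. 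This simultaneously shows $13 \mid |G_B| \Rightarrow |G_B| = 26$ and $m_{66} = 1$. I expect the only point needing care to be this final bookkeeping: one must confirm via Table~\ref{table:3 G_Bの候補その1} that no stabilizer order divisible by $13$ other than $13$ and $26$ survives, and that $D_{26}$ really occurs (guaranteed by $|G|/26 > 0$); the strict inequality $|G|/13 > |G|/26$ then excludes the stabilizer $\mathbb{Z}_{13}$ with no further case distinction.
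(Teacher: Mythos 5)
Your argument is correct, and it takes a genuinely different route from the paper. The paper proves the key divisibility claim ($13\mid |G_B|\Rightarrow 2\mid |G_B|$) by an explicit construction: it takes a root $\zeta$ of $x^6+x^5+1$, exhibits a concrete element $g$ of order $13$, a concrete $13$-set $B_0$, and a concrete involution $f$ with $\langle f,g\rangle\cong D_{26}\subseteq G_{B_0}$, then appeals to conjugacy of the order-$13$ subgroups. You instead double-count the incidences $\{(g,B):|g|=13,\ g\in G_B\}$: the character data in Table 1 together with Lemma 2.6 gives $\sum_{|g|=13}\chi_{13}(g)=6|G|/13$ (your identity $b=|C_G(g)|/13$ in both the $13\mid 2^n-1$ and $13\mid 2^n+1$ cases is what makes the two cases of Remark 3.4 merge), while Table 3 restricts the relevant stabilizers to $\mathbb{Z}_{13}$ or $D_{26}$, each containing exactly $12$ elements of order $13$, so $\#\{B:13\mid|G_B|\}=|G|/26$; the orbit-size bookkeeping $\sum_i 1/|G_{B_i}|=1/26$ with terms in $\{1/13,1/26\}$ then forces a single orbit with stabilizer of order $26$. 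Your approach buys two things: it needs no explicit field computation, and it delivers the uniqueness of the orbit (hence $m_{66}=1$) and the exclusion of $|G_B|=13$ in one stroke, rather than deducing them from the conjugacy of order-$13$ subgroups plus the single exhibited example; it is also very much in the spirit of the Cauchy--Frobenius--Burnside counting the paper uses later in Sections 4 and 5. What the paper's construction buys in exchange is an explicit block realizing the $3$-$(2^n+1,13,66)$ design. Your bookkeeping at the end is sound: $|G|/26>0$ guarantees at least one orbit, and a term $1/13$ would already exceed $1/26$, so no orbit with stabilizer $\mathbb{Z}_{13}$ can occur.
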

\begin{proof}
Recall that $m_{66}$ represents the number of orbits $G(B)$ of 3-$(2^{n}+1,13,\lambda_{66})$ designs. By Table~\ref{table:2 元の位数と固定点数とn条件}, the existence of an element of order $13$ requires $n \equiv 0 \pmod{6}$. If $n \not\equiv 0 \pmod{6}$, then no element of order $13$ exists in $G$. Consequently, no such element can exist in any subgroup $G_B \subset G$. This implies that $|G_B| \neq 26$ in this case, thus $m_{66} = 0$ immediately follows. Therefore, in the subsequent proof, we consider only the case where $n \equiv 0 \pmod{6}$. To prove that if $13\mid |G_{B}|$, then $|G_{B}|=26$, it suffices to show that $2\mid |G_{B}|$ by Table~\ref{table:3 G_Bの候補その1}.

By Lemma~\ref{lem:2.5 dicson定理}, the Sylow $13$-subgroup is cyclic. 
Hence any subgroups of order $13$ in $G$ are conjugate to each other. Therefore it is enough to show that a particular subgroup $G_{B_{0}}$ which contains an element of order $13$ contains an involution. Let $\zeta$ be a root of $x^{6}+x^{5}+1=0$ in $\GF(2^{n})$. Note that this is a primitive polynomial over $\GF(2^{6})$, so $\zeta$ is an element of order $63$ in $\GF(2^{n})^{\times}$.

First, we show that $g(x) = \frac{x+\zeta+1}{x}$ is an element of $G$ of order $13$.
Let us define a matrix $A = \zeta^{34}\begin{psmallmatrix} 1 & \zeta+1 \\ 1 & 0 \end{psmallmatrix}=\begin{psmallmatrix} \zeta^{34} & \zeta^{29} \\ \zeta^{34} & 0 \end{psmallmatrix}$.
The determinant of $A$ is $\det(A) = \zeta^{29} \cdot \zeta^{34} = \zeta^{63} = 1$, \text{ so } $A \in \SL(2, 2^n)$. The characteristic polynomial $P_{A}(x)$ of matrix $A$ is 
\[P_A(x)=x^{2}+\operatorname{trace}(A)x+\det(A)=x^{2}+\zeta^{34}x+1.\]
Since 
\[x^{13}+1=(x+1)\prod_{i=0}^{5}(x^{2}+(\zeta^{34})^{2^{i}}+1),\]
the characteristic polynomial divides $x^{13}+1$, which confirms that the matrix has order $13$.

Now, we define a set $B_0$ as follows:
\[B_{0} = \{g^{i}(1)\}_{\{i=0,1,\ldots,12\}}=\{1,\zeta,\zeta^{62},\zeta^{39},\zeta^{26},\zeta^{29},\zeta^{32},\zeta^{19},\zeta^{59},\zeta^{57},\zeta^{58},0,\infty\} \subset X.\]
Clearly $g \in G_{B_{0}}$ and $|g| = 13$. Furthermore, we define the linear function $f(x) = \frac{x+\zeta}{x+1}$. By Lemma~\ref{lem:3.3 位数2の元の必要十分条件}, we have $|f| = 2$. We can verify that $f \in G_{B_0}$ since the relation $fg=g^{12}f$ implies that $f(g^{i}(1))=g^{13-i}(f(1))=g^{13-i}(\infty)=g^{12-i}(1)$ for $i=0,1,\ldots,12$, which demonstrates that $f$ maps $B_0$ to itself. Moreover, the relation $fgf = g^{12}$ holds, so we have $\langle f,g \rangle \cong D_{26}$.
\end{proof}

\begin{lem}[{\cite{1}}]\label{lem:3.6 二面体群は共役}
For $d$ satisfying $d \mid 2^n \pm 1$, all dihedral groups $D_{2d} \subset G$ of order $2d$ are conjugate in $G$.
\end{lem}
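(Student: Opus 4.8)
\textbf{Proof proposal for Lemma~\ref{lem:3.6 二面体群は共役}.}

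The plan is to reduce the statement to the structure of maximal tori of $G=\PSL(2,2^{n})$, using that in characteristic $2$ the squaring map $x\mapsto x^{2}$ is a bijection of $\GF(2^{n})^{\times}$ (equivalently, $G$ has a single conjugacy class of involutions). First note that $2^{n}\pm 1$ is odd, so $d$ is odd; the case $d=1$ is just the assertion that all subgroups $\mathbb{Z}_{2}\subset G$ are conjugate, which is immediate in characteristic $2$. So I would assume $d\ge 3$ from now on.

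For $d\ge 3$ I would first isolate the rotation subgroup. The elements of odd order in a dihedral group $D\cong D_{2d}$ are exactly its rotations, so any subgroup of order $d$ is contained in --- hence equal to --- the cyclic rotation subgroup $C\cong\mathbb{Z}_{d}$, which is therefore characteristic in $D$. Since $d\ge 3$, $d$ divides exactly one of $2^{n}-1$, $2^{n}+1$; let $T$ be the corresponding (split or non-split) maximal torus of $G$, cyclic of order $N:=2^{n}\mp 1$. A generator of $C$ is a regular semisimple element with $C_{G}(C)=T$, and $C$ is the unique subgroup of order $d$ in the cyclic group $T$; combining this with the fact that all maximal tori of a fixed type are conjugate in $G$, all cyclic subgroups of order $d$ in $G$ are conjugate. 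Hence, given two dihedral subgroups $D,D'$ of order $2d$, after conjugating $D$ I may assume their rotation subgroups coincide, say $C=C'$, and moreover $C_{G}(C)=T$.

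Finally, since $C$ is characteristic in the cyclic group $T=C_{G}(C)$, we have $N_{G}(C)=N_{G}(T)$, which is the dihedral normalizer $D_{2N}$ of the maximal torus; thus $D,D'\subseteq D_{2N}$, and I may write $D=\langle C,r\rangle$, $D'=\langle C,r'\rangle$ with $r,r'$ involutions lying in $D_{2N}\setminus T$. Because $N$ is odd, the $N$ reflections of $D_{2N}$ form a single conjugacy class under $T$, so $r'=trt^{-1}$ for some $t\in T$; since $T$ is abelian and $C\subseteq T$, this $t$ fixes $C$, whence $tDt^{-1}=\langle C,r'\rangle=D'$. Composing with the preliminary conjugation finishes the proof. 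In the split case this last step is the explicit computation: conjugate $C$ into the diagonal torus so that $r=\begin{psmallmatrix}0&b\\b^{-1}&0\end{psmallmatrix}$, and observe that conjugation by $\mathrm{diag}(s,s^{-1})$ sends $b\mapsto s^{2}b$, which --- squaring being onto $\GF(2^{n})^{\times}$ --- can be normalized to $b=1$; the non-split torus case is formally identical. The one point requiring care is the simultaneous conjugacy --- guaranteeing that the element taking $r$ to $r'$ also preserves $C$ --- which is exactly why one wants it inside $T$; everything else is standard structure theory of $\PSL(2,q)$.
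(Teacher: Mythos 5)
Your argument is correct, but note that the paper does not prove this lemma at all: it is quoted verbatim from Dickson's classification \cite{1}, so there is no in-paper proof to compare against. Your proposal is a legitimate self-contained derivation, and every step checks out: $d$ is odd, so for $d\ge 3$ the rotation subgroup $C\cong\mathbb{Z}_d$ is the set of odd-order elements and hence characteristic in $D_{2d}$; $d$ divides exactly one of $2^n\mp 1$ because those are coprime; $C_G(C)=T$ is cyclic of order $N$ by Table~1, so $C$ is the unique order-$d$ subgroup of $T$ and $N_G(C)=N_G(T)\cong D_{2N}$; and since $N$ is odd the map $t\mapsto t^2$ is onto $T$, so $tst^{-1}=t^2s$ sweeps out all $N$ reflections while fixing $C$ elementwise, which gives the final conjugation. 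The only caveat is that you invoke two nontrivial structural facts without proof, namely that all maximal tori of a given type (split/non-split) are conjugate in $G$ and that $N_G(T)$ is dihedral of order $2N$; these are true and standard, but they are themselves part of Dickson's theorem (essentially Lemma~2.5 of the paper plus the case $d=N$ of the very statement being proved), so your proof is best read as a clean reduction of the general $d$ to the maximal case $d=N$ rather than an argument from first principles. Given that the paper is content to cite Dickson outright, your write-up actually supplies more detail than the paper does.
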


\begin{remark}
Regarding Lemma \ref{lem:3.6 二面体群は共役}, the condition for a dihedral group of order $2d$ to be a subgroup of $G$ is, by Lemma \ref{lem:2.5 dicson定理}, that $d \mid 2^n \pm 1$. Therefore, whenever a dihedral group $D_{2d}$ exists in $G$, all such dihedral groups are conjugate in $G$. We will use this fact to prove Proposition \ref{prop:3.8 GBは6ではない}.
\end{remark}

\begin{prop}\label{prop:3.8 GBは6ではない}
$|G_{B}| \neq 6$.
\end{prop}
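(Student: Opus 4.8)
The plan is to assume $|G_{B}|=6$ and derive a contradiction from the incompatible ways in which an order-$3$ element and an involution of $G_{B}$ must act on the fixed points of that order-$3$ element.

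First I would pin down the isomorphism type of $G_{B}$. By Lemma~\ref{lem:2.5 dicson定理} and Table~\ref{Table:1 置換指標}, every nonidentity element of $G$ has order $2$ or an odd divisor of $2^{n}\pm 1$, so $G$ contains no element of order $6$; hence a subgroup of order $6$ cannot be cyclic, and $G_{B}\cong D_{6}\cong S_{3}$. In particular $G_{B}$ contains an element $g$ with $|g|=3$ and an involution $f$, and conjugation by $f$ cannot fix $g$ (otherwise $\langle f,g\rangle$ would be abelian of order $6$), so $fgf^{-1}=g^{-1}$, equivalently $gf=fg^{-1}$.

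Next I would analyse the permutation action on $X$. Since $g\in G_{B}$, Proposition~\ref{prop:2.7 G_Bの元とnの条件} forces $n$ even, and by Table~\ref{table:2 元の位数と固定点数とn条件} the element $g$ has exactly two fixed points $p,q\in X$, with $X\setminus\{p,q\}$ partitioned into $3$-cycles of $g$. As $g\in G_{B}$, the block $B$ is a union of $g$-orbits, and from $13\equiv 1\pmod 3$ it follows that $B$ contains exactly one of $p,q$; say $p\in B$ and $q\notin B$. From $gf=fg^{-1}$ we get $g(f(p))=f(g^{-1}(p))=f(p)$, so $f(p)$ is a fixed point of $g$, i.e. $f(p)\in\{p,q\}$. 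If $f(p)=p$ then the same reasoning gives $f(q)=q$, so $f$ fixes two points of $X$, contradicting $\chi(f)=1$ in Table~\ref{Table:1 置換指標}. Hence $f(p)=q$, and since $p\in B$ with $f\in G_{B}$ we obtain $q=f(p)\in B$, contradicting $q\notin B$. Therefore $|G_{B}|\neq 6$.

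I do not expect a serious obstacle here: the argument is essentially self-contained, and the only delicate point is the case split $f(p)\in\{p,q\}$ together with the fixed-point count $\chi(f)=1$ for involutions. If one prefers the route flagged in the remark preceding the statement, one may instead use Lemma~\ref{lem:3.6 二面体群は共役} to replace $G_{B}$ by a single fixed copy of $D_{6}=\langle g,f\rangle\subset G$, run exactly the same fixed-point computation for it, and conclude by conjugacy that no $13$-subset can be $D_{6}$-invariant; the fixed-point computation remains the heart of the proof in either formulation.
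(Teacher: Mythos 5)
Your proof is correct, but it takes a genuinely different and more elementary route than the paper's. Both arguments start identically: dispose of odd $n$, identify $G_B\cong D_6$, and use the dihedral relation to show that the involution $f$ permutes the two fixed points $\{p,q\}$ of the order-$3$ element $g$. At that point the paper uses membership in $B$ to force $f(p)=p$ (the ``common fixed point'' step), then invokes the conjugacy of dihedral subgroups (Lemma~3.6) to normalize that common fixed point to $\infty$, and finishes with an explicit matrix computation showing that upper-triangular elements of orders $2$ and $3$ can never satisfy $srs=r^2$ (such elements in fact generate $A_4$, as \S 5 later exploits). You instead observe that $f(p)=p$ already forces $f(q)=q$ by injectivity of $f$ on the $g$-fixed-point set, so $f$ would have two fixed points, contradicting $\chi(f)=1$ for involutions from Table~1; and the remaining case $f(p)=q$ contradicts the fact that $B$ contains exactly one of $p,q$ because $13\equiv 1\pmod 3$. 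This kills both cases with pure fixed-point counting, needs neither the matrix computation nor the conjugacy lemma, and actually yields the slightly stronger observation that in any $D_6\le G$ the involutions must swap the two fixed points of the rotation. The only thing the paper's longer computation buys is the explicit upper-triangular description of stabilizers of $\infty$, which it reuses in \S 5; as a proof of Proposition~3.8 itself, your argument is shorter and complete.
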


\begin{proof}
From Table~\ref{table:2 元の位数と固定点数とn条件}, the condition for an element of order $3$ to be contained in $G_B$ is that $n$ is even. Therefore, if $n$ is odd, it is immediately clear that $|G_B| \neq 6$. In the following discussion, we will only consider the case where $n$ is even, that is, when elements of order $3$ can exist in $G_B$.

When $|G_B|=6$, note that from Table~\ref{table:3 G_Bの候補その1}, we have $G_B \simeq D_6$. Therefore, we assume $D_6 = \langle r,s \mid r^3=s^2=(rs)^2=\mathrm{id}\rangle \simeq G_B$. Furthermore, by Lemma~\ref{lem:2.6 13元集合と固定点}, $r$ is a product of four $3$-cycles and $s$ is a product of six transpositions, each having exactly one fixed point in $B$.

First, we show that $r$ and $s$ share the same fixed point. Let $x_s$ and $x_r$ be the fixed points of $s$ and $r$ respectively. Since $\langle r,s\rangle \simeq D_6$, we have $rs=sr^2$. By applying $rs$ and $sr^2$ to $x_r$, we obtain $r(s(x_r))=s(x_r)$. Since $x_r$ is the unique fixed point of $r$, we must have $s(x_r)=x_r$. As $x_s$ is the unique fixed point of $s$, we conclude $x_r=x_s$. Thus, $r$ and $s$ share exactly one fixed point in $B$.
Without loss of generality, we may assume this common fixed point is $\infty$. Any element of $G$ fixing $\infty$ has the matrix form $\begin{psmallmatrix} a & b \\ 0 & a^{-1} \end{psmallmatrix}$ where $a \in \GF(2^n)^\times$ and $b \in \GF(2^n)$. This is because when a linear function $f(x)=\frac{ax+b}{cx+d}$ fixes $\infty$, then $c=0$ and the determinant condition $ad-bc=ad=1$ implies $d=a^{-1}$. Elements of order $2$ fixing $\infty$ have the form $\begin{psmallmatrix} 1 & x \\ 0 & 1 \end{psmallmatrix}$ where $x \in \GF(2^n) \setminus \{0\}$, and elements of order $3$ fixing $\infty$ have the form $\begin{psmallmatrix} \alpha & y \\ 0 & \alpha^2 \end{psmallmatrix}$ where $y \in \GF(2^n)$ and $\alpha \in \{\omega,\omega^2\}$. Therefore, $D_6=G_B$ with fixed point $\infty$ must have the form $D_6=\left\langle\begin{psmallmatrix} 1 & x \\ 0 & 1 \end{psmallmatrix}, \begin{psmallmatrix} \alpha & y \\ 0 & \alpha^2 \end{psmallmatrix}\right\rangle$ for some $x,y,\alpha$.

Let $r=\begin{psmallmatrix} \alpha & y \\ 0 & \alpha^2 \end{psmallmatrix}$ and $s=\begin{psmallmatrix} 1 & x \\ 0 & 1 \end{psmallmatrix}$. By the definition of dihedral groups, we must have $srs=r^2$. However, we find that
$srs=\begin{psmallmatrix} \alpha & x+y \\ 0 & \alpha^2 \end{psmallmatrix}$ and
$r^2=\begin{psmallmatrix} \alpha^2 & y \\ 0 & \alpha \end{psmallmatrix}$. Since $\alpha \neq \alpha^2$, we have $srs \neq r^2$ for any choice of $x,y,\alpha$. Therefore, no $D_6$ with fixed point $\infty$ can exist as $G_B$.

By Lemma~\ref{lem:3.6 二面体群は共役}, all dihedral groups $D_{6}$ are conjugate in $G$. Suppose $D_6\simeq G_B$ has a fixed point $p$ in $B$. Then there exists $g \in G$ such that $g$ moves $p$ to $\infty$ by conjugation, and $gG_{B}g^{-1}=G_{g(B)}$ becomes a stabilizer of the 13-element set $g(B)$ with fixed point $\infty$. However, this contradicts our previous result that no dihedral group fixing $\infty$ can exist as $G_B$. Therefore, no dihedral group $D_6 \subset G$ can be realized as $G_B$.
\end{proof}
Summarizing the above results, we obtain the following theorem.
\begin{thm}\label{thm:main}
Suppose that $G(B)$ forms a $3$-$(2^n+1,13,\lambda_{B})$ design. Then the possible values of $\lambda_{B}$ are restricted to:
\[\lambda_{B} \in \{66,78,143,429,572,858,1716\}.\]
\end{thm}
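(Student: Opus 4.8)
The plan is to start from Table~\ref{table:3 G_Bの候補その1}, which already records the complete list of admissible stabilizer orders $|G_B| \in \{1,2,3,4,6,11,12,13,22,26\}$ together with the corresponding values $\lambda_B = 1716/|G_B|$ coming from~(\ref{eq:2.2 ラムダとG_B}). Passing from this list to the statement of the theorem amounts to discarding exactly the three values $|G_B| \in \{6,11,13\}$, equivalently $\lambda_B \in \{286,156,132\}$; every other case is already settled by the earlier results of $\S 2$ and $\S 3$, so the proof is essentially a bookkeeping step once those three exclusions are in place.

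First I would eliminate $|G_B|=11$. If $11 \mid |G_B|$ then $G_B$ contains an element of order $11$, whence $11 \mid 2^n-1$ by Table~\ref{Table:1 置換指標}; since $11 \le k = 13 \le 11+2$, Lemma~\ref{lem:3.1 軌道の一意性} applies and forces $2 \mid |G_B|$, so $|G_B| = 22$ by Table~\ref{table:3 G_Bの候補その1} — this is precisely Proposition~\ref{prop:3.2 m78=1}, and in particular $|G_B| = 11$ cannot occur. The value $|G_B| = 13$ is excluded in the same spirit by Proposition~\ref{prop:3.5 m66=1}: since all subgroups of order $13$ are conjugate (the Sylow $13$-subgroup is cyclic by Lemma~\ref{lem:2.5 dicson定理}), it suffices to exhibit one $13$-subset whose stabilizer contains both an element of order $13$ and an involution; the explicit pair $g(x) = \frac{x+\zeta+1}{x}$ and $f(x) = \frac{x+\zeta}{x+1}$ with $\zeta \in \GF(2^n)^\times$ of order $63$ does this, giving $\langle f,g \rangle \cong D_{26}$ and hence $|G_B| = 26$, so $|G_B| = 13$ is impossible. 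This argument is arranged to cover both $n \equiv 0$ and $n \equiv 6 \pmod{12}$ at once, which is the reason for the more hands-on treatment than in the order-$11$ case.

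Next I would rule out $|G_B| = 6$, which by Table~\ref{table:3 G_Bの候補その1} would force $G_B \cong D_6$. The key point is that an order-$3$ element $r$ and an involution $s$ generating such a $D_6$ must share their unique fixed point in $B$: applying $rs = sr^2$ to the fixed point of $r$ shows $s$ fixes it, and uniqueness of the fixed point of $s$ gives the claim. Normalizing this common point to $\infty$ makes both $r$ and $s$ upper triangular, and a direct matrix computation shows that $srs$ and $r^2$ have different diagonal entries (using $\alpha \neq \alpha^2$ for $\alpha$ a primitive cube root of unity), contradicting the dihedral relation $srs = r^2$. Since all $D_6 \subset G$ are conjugate by Lemma~\ref{lem:3.6 二面体群は共役}, an arbitrary putative $G_B \cong D_6$ can be conjugated to one fixing $\infty$, yielding the contradiction in general — this is Proposition~\ref{prop:3.8 GBは6ではない}.

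With $|G_B| \in \{6,11,13\}$ eliminated, the surviving candidates are $|G_B| \in \{1,2,3,4,12,22,26\}$, and substituting into $\lambda_B = 1716/|G_B|$ gives exactly $\lambda_B \in \{1716, 858, 572, 429, 143, 78, 66\}$, which is the assertion. The theorem is thus an assembly of the three supporting propositions, and the main obstacle lies not in the assembly but in Proposition~\ref{prop:3.5 m66=1}: producing an explicit $D_{26}$-stabilized $13$-set that works simultaneously in the split case $13 \mid 2^n-1$ and the non-split case $13 \mid 2^n+1$, where the uniqueness lemma (Lemma~\ref{lem:3.1 軌道の一意性}) is not available.
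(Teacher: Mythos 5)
Your proposal is correct and follows essentially the same route as the paper: the theorem is assembled from Propositions~\ref{prop:3.2 m78=1}, \ref{prop:3.5 m66=1}, and \ref{prop:3.8 GBは6ではない}, which eliminate $|G_B|\in\{11,13,6\}$ respectively from the candidate list in Table~\ref{table:3 G_Bの候補その1}, exactly as you describe. Your identification of the genuinely nontrivial ingredient --- the explicit $D_{26}$-stabilized $13$-set covering both the $13\mid 2^n-1$ and $13\mid 2^n+1$ cases where Lemma~\ref{lem:3.1 軌道の一意性} is unavailable --- also matches the paper's own emphasis in Remark~\ref{rem:3.4}.
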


\begin{remark}
From the discussion in $\S 3$, we have shown that $|G_{B}| \notin \{6,11,13\}$ ($\lambda_{B} \notin \{132,156,286\}$) from the candidates listed in Table~\ref{table:3 G_Bの候補その1}. Therefore, summarizing these results, we obtain Table~\ref{table:4 GB候補2} below.
\end{remark}

\begin{table}[h]
\centering
\caption{Possible $|G_B|$, $G_B$, and corresponding $\lambda_B$}
\label{table:4 GB候補2}
\renewcommand{\arraystretch}{0.7}
\begin{tabular}{|c|c|c|c|c|c|c|c|}
\hline
$|G_B|$ & 1 & 2 & 3 & 4 & 12 & 22 & 26 \\
\hline
$G_B$ & Id & $\mathbb{Z}_2$ & $\mathbb{Z}_3$ & $\mathbb{Z}_2^2$ &  $A_4$ & $D_{22}$ & $D_{26}$ \\
\hline
$\lambda_B$ & 1716 & 858 & 572 & 429 &  143  & 78 & 66 \\
\hline
\end{tabular}
\end{table}

\section{Number of orbits of $13$-subsets under $G$ }

In $\S 3$, we identified the possible values of $|G_B|$ and corresponding $\lambda_B$ for the $3$-$(2^n+1,13,\lambda_B)$ design formed by $G(B)$, as summarized in Table~\ref{table:4 GB候補2}. To prove which of these values of $\lambda_B$ are actually realizable, we need to determine the corresponding values of $m_{\lambda_B}$. In this section, we will identify the total number of orbits of $13$-element subsets under the action of $G$ to determine all values of $m_{\lambda_B}$. Note that in this $\S 4$, we use the notation $q=2^n$.

\begin{defi}\label{defi: 4.1 準備}
Let $J_d$ denote the number of conjugacy classes of elements of order $d$, and let $|C_G(g_{d})|$ denote the order of the centralizer of elements of order $d$ in $G$. The values of $J_d$ and $|C_G(g_d)|$ are uniquely determined by the value of $d$, and these values are given in Table 1. For any positive integers $a$, $b$, and $d$, we define $F(13,a,b,d)$ to be the number of $13$-subsets that remain invariant under the action of elements of order $d$ when $n \equiv b \pmod{a}$. Using this function, we define $B(13,a,b,d)$ as follows:
\[B(13,a,b,d): \text{The value of } \dfrac{J_{d}}{|C_G(g_{d})|}F(13,a,b,d) \text{ when } n\equiv b\pmod{a}.\]
The value $F(13,a,b,d)$ corresponds to $\chi_{13}(g)$ when elements of order $d$ exist. In particular, the parameters $a$ and $b$ are determined by Table~\ref{table:2 元の位数と固定点数とn条件}.
\end{defi}
The following Lemma~\ref{lem:4.2 バーンサイドの補題のもと} provides the expanded form of the binomial expressions in $\chi_{13}(g)$, making them more suitable for calculation.

\begin{lem}\label{lem:4.2 バーンサイドの補題のもと}
 For $dj\leq 13 \leq d(j+1)-1$, the following formulas hold:

(1) If $d=1$, then $F(13,1,0,1)=\dfrac{1}{13!}\prod_{i=1}^{13}(q-i+2),\quad $(2) If $d=2$, then $F(13,1,0,2)=\dfrac{1}{2^{j}j!}\prod_{i=0}^{j-1}(q-2i),$

(3) Where $d \mid 2^{n}-1:$
\[
F(13,a,b,d) = 
\begin{cases}
\dfrac{\prod_{i=0}^{j-1} (q - di - 1)}{d^j j!} & \text{if } 13 \equiv 0,2 \pmod d, \\
2\dfrac{\prod_{i=0}^{j-1} (q - di - 1)}{d^j j!} & \text{if } 13 \equiv 1 \pmod d, \\
0 & \text{if } 13 \not \equiv 0,1,2 \pmod d,
\end{cases}
\]

(4) Where $d \mid 2^{n}+1:$
\[
F(13,a,b,d) = 
\begin{cases}
\dfrac{\prod_{i=0}^{j-1} (q - di + 1)}{d^j j!} & \text{if } 13 \equiv 0 \pmod d, \\
0 & \text{if}\, 13 \not \equiv 0 \pmod d.
\end{cases}
\]

Consequently, the values of $B(13,a,b,d)$ can be expressed as follows:

(1) If $d=1$, then $B(13,1,0,1)=\dfrac{1}{13!}\prod_{i=1}^{10}(q-i-1),\quad $(2) If $d=2$, then $B(13,1,0,2)=\dfrac{1}{2^{j}j!}\prod_{i=1}^{j-1}(q-2i),$

(3) Where $d \mid 2^{n}-1:$
\[
B(13,a,b,d) = 
\begin{cases}
\dfrac{\phi(d)}{2} \dfrac{\prod_{i=1}^{j-1} (q - di - 1)}{d^j j!} & \text{if } 13 \equiv 0,2 \pmod d,  \\
\phi(d) \dfrac{\prod_{i=1}^{j-1} (q - di - 1)}{d^j j!} & \text{if } 13 \equiv 1 \pmod d,  \\
0 & \text{if } 13 \not \equiv 0,1,2 \pmod d,
\end{cases}
\]

(4) Where $d \mid 2^{n}+1:$
\[
B(13,a,b,d) = 
\begin{cases}
\dfrac{\phi(d)}{2} \dfrac{\prod_{i=1}^{j-1} (q - di + 1)}{d^j j!} & \text{if } 13 \equiv 0 \pmod d, \\
0 & \text{if }  13  \not \equiv 0 \pmod d.
\end{cases}
\]
\end{lem}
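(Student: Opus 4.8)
The plan is to derive every formula directly from Lemma~\ref{lem:2.6 13元集合と固定点}, which already expresses $\chi_{13}(g)$ as a product of two binomial coefficients, and then to read the remaining data off Table~\ref{Table:1 置換指標} and Table~\ref{table:2 元の位数と固定点数とn条件}. First I would fix $d$ and let $g$ be an element of order $d$ actually occurring in $G$; by Lemma~\ref{lem:2.6 13元集合と固定点} it has $a=\chi(g)$ fixed points, $b=\frac{q+1-a}{d}$ cycles of length $d$, and $\chi_{13}(g)=\binom{a}{r}\binom{b}{x}$ with $13=dx+r$, $0\le r<d$. The inequality $dx\le 13\le d(x+1)-1$ forced by $0\le r\le d-1$ is exactly the defining inequality for $j$, so $x=j$ in every case; this identification is what lets the answer be stated with the single index $j$.

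Then I would split into the four cases, substituting the value of $a$: for $d=1$, $a=q+1$ and $r=0$, so $F=\binom{q+1}{13}$; for $d=2$, $a=1$ and $13=2\cdot 6+1$, so $F=\binom{1}{1}\binom{q/2}{6}=\binom{q/2}{6}$; for $d\mid q-1$ with $d\neq 1$ (whence $d\geq 3$, since $q-1$ is odd), $a=2$ and $F=\binom{2}{r}\binom{(q-1)/d}{j}$, where the elementary values $\binom{2}{0}=\binom{2}{2}=1$, $\binom{2}{1}=2$, $\binom{2}{r}=0$ for $r\geq 3$ produce exactly the stated case split on $13\bmod d$; for $d\mid q+1$, $a=0$ and $F=\binom{0}{r}\binom{(q+1)/d}{j}$, which vanishes unless $d\mid 13$. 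In each surviving case I would expand the remaining binomial coefficient as $\binom{b}{j}=\frac1{j!}\prod_{i=0}^{j-1}(b-i)$ and clear the denominators by writing $b-i=\frac{q+1-a-di}{d}$, producing the displayed products $\prod_{i=0}^{j-1}(q-di-1)$, $\prod_{i=0}^{j-1}(q-di+1)$, $\prod_{i=0}^{j-1}(q-2i)$ respectively, and for $d=1$ simply re-indexing $\binom{q+1}{13}=\frac1{13!}\prod_{i=1}^{13}(q-i+2)$.

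For the $B$-formulas I would multiply by $\frac{J_d}{|C_G(g_d)|}$, reading both quantities off Table~\ref{Table:1 置換指標}: $J_1=1$ and $|C_G(g_1)|=6\binom{q+1}{3}=(q+1)q(q-1)$; $J_2=1$ and $|C_G(g_2)|=q$; $J_d=\phi(d)/2$ and $|C_G(g_d)|=q-1$ for $d\mid q-1$, $d\neq1$; $J_d=\phi(d)/2$ and $|C_G(g_d)|=q+1$ for $d\mid q+1$. The key observation is that the centralizer order is precisely the $i=0$ factor of the product appearing in $F$ (namely $q-2\cdot0=q$ for $d=2$, and $q\mp d\cdot 0\mp(-1)=q-1$ or $q+1$ in cases (3) and (4)), while for $d=1$ it equals the product of the first three factors $(q+1)q(q-1)$; hence dividing by it merely raises the lower limit of the product from $i=0$ to $i=1$ (and in the $d=1$ case drops $\prod_{i=1}^{13}(q-i+2)$ to $\prod_{i=1}^{10}(q-i-1)$), which gives the stated expressions. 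The factor $\phi(d)/2$ either survives unchanged or, in the subcase $13\equiv1\pmod d$, absorbs the leading $2$ to become $\phi(d)$.

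The computations are mechanical once the cases are set up, so there is no genuine obstacle; the only points needing care are (i) verifying the identification $x=j$ so the floor-index bookkeeping is consistent across all four cases, (ii) the elementary evaluations of $\binom{a}{r}$ that generate the congruence-based case distinctions (together with the remarks that $d=2$ forces $r=1$ so no split occurs, and that $d\mid q-1$, $d\neq1$ forces $d\geq 3$ so that $r=2$ is admissible), and (iii) noting the mild anomaly that in the $d=1$ case three factors of the $F$-product, rather than one, are absorbed when passing to $B$.
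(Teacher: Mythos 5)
Your derivation is correct and complete: it is exactly the computation the paper intends (the paper in fact states Lemma~4.2 without a written proof), namely substituting the cycle data of Lemma~\ref{lem:2.6 13元集合と固定点} and Table~\ref{Table:1 置換指標} into $\chi_{13}(g)=\binom{a}{r}\binom{b}{j}$ and then dividing by $|C_G(g_d)|$, whose value coincides with the $i=0$ factor (or, for $d=1$, the first three factors) of each product. The three points of care you flag --- $x=j$, the evaluation of $\binom{a}{r}$ driving the congruence splits, and the $d=1$ absorption of $(q+1)q(q-1)$ --- are precisely the places where the bookkeeping could go wrong, and you handle them correctly.
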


\begin{prop}\label{prop:4.3 軌道の総数N13}
Let $N_{13}$ denote the total number of orbits in a 13-element set. We define:
\[X_{1,n}=\frac{1}{6227020800}(q-2)(q-3)(q-4)(q-5)(q-6)(q-7)(q-8)(q-9)(q-10)(q-11),\]
\[X_{2,n}=\frac{1}{46080}(q-2)(q-4)(q-6)(q-8)(q-10), \quad X_{3,n}=\frac{1}{972}(q-4)(q-7)(q-10),\]
and let $X_{n}=X_{1,n}+X_{2,n}+X_{3,n}$. Then, the value of $N_{13}$ is determined as follows:
\[
N_{13} =
\begin{cases}
X_n, & \text{if } n \equiv 2,4,8,14,16,22,26,28,32,34,38,44,46,52,56,58 \pmod{60}, \\
X_n + \frac{5}{11}, & \text{if } n \equiv 10,20,40,50 \pmod{60}, \\
X_n + \frac{6}{13}, & \text{if } n \equiv 6,12,18,24,36,42,48,54 \pmod{60}, \\
X_n + \frac{131}{143}, & \text{if } n \equiv 0,30 \pmod{60}, \, n > 0, \\
X_{1,n} + X_{2,n}, & \text{if}\,\,n\,\,\text{is odd}.
\end{cases}
\]
\end{prop}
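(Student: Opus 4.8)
The plan is to compute $N_{13}$ via the Cauchy--Frobenius--Burnside lemma, writing
\[
N_{13} = \frac{1}{|G|}\sum_{g\in G}\chi_{13}(g) = \sum_{d}\frac{J_d}{|C_G(g_d)|}\,\chi_{13}(g_d),
\]
where the sum runs over the orders $d$ of elements of $G$, grouped by conjugacy class, using $|G|=q(q^2-1)$ and the class/centralizer data of Table~\ref{Table:1 置換指標}. By Lemma~\ref{lem:2.6 13元集合と固定点}, $\chi_{13}(g_d)$ vanishes unless $g_d$ fixes a $13$-subset setwise, which by the cycle-type analysis forces $d\in\{1,2,3,13\}$ (an element of order $11$ could in principle contribute, but $13\not\equiv 0,1,2\pmod{11}$, so its contribution is $0$; likewise most $d\mid q\pm1$ drop out). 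So the sum collapses to the terms $B(13,a,b,1)$, $B(13,a,b,2)$, and — depending on congruences of $n$ — $B(13,a,b,3)$ and $B(13,a,b,13)$, each supplied by Lemma~\ref{lem:4.2 バーンサイドの補題のもと}.

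The main body of the argument is a careful case split on $n\bmod 60$, where $60=\mathrm{lcm}(2,6,10,12)$ captures exactly the arithmetic conditions under which elements of orders $2,3,11,13$ occur with fixed points (Table~\ref{table:2 元の位数と固定点数とn条件}): $3$-elements need $n$ even, $13$-elements need $n\equiv 0\pmod 6$ (split further as $n\equiv 0\pmod{12}$, where $13\mid q-1$, versus $n\equiv 6\pmod{12}$, where $13\mid q+1$), and $11$-elements need $n\equiv 0\pmod{10}$. First I would handle $n$ odd: only $d=1$ and $d=2$ contribute, giving $N_{13}=B(13,1,0,1)+B(13,1,0,2)=X_{1,n}+X_{2,n}$ after identifying $B(13,1,0,1)=\frac{1}{13!}\prod_{i=2}^{11}(q-i)=X_{1,n}$ and $B(13,1,0,2)=\frac{1}{2^6\cdot 6!}\prod_{i=1}^{5}(q-2i)=X_{2,n}$ (here $j=6$ since $2\cdot 6\le 13\le 2\cdot 7-1$). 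For $n$ even but $n\not\equiv 0\pmod 6$ and $n\not\equiv 0\pmod{10}$, add the order-$3$ term $B(13,3,0,3)$ with $j=4$, i.e. $\frac{\phi(3)}{2}\cdot\frac{(q-4)(q-7)(q-10)}{3^4\cdot 4!}=\frac{(q-4)(q-7)(q-10)}{972}=X_{3,n}$, recovering $N_{13}=X_n$. Then the remaining residue classes are obtained by adding whichever of the following surviving constant contributions apply: from order-$11$ elements $B(13,10,0,11)=\frac{\phi(11)}{2}\cdot\frac{1}{11}=\frac{5}{11}$ (since $13=11\cdot 1+2$, so $j=1$ and the product $\prod_{i=1}^{0}$ is empty); from order-$13$ elements $B(13,12,0,13)=\frac{\phi(13)}{2}\cdot\frac{1}{13}=\frac{6}{13}$ in the $13\mid q-1$ case and $B(13,12,6,13)=\frac{\phi(13)}{2}\cdot\frac{1}{13}=\frac{6}{13}$ in the $13\mid q+1$ case (again $j=1$, empty product); and when $n\equiv 0\pmod{30}$ both the $\frac{5}{11}$ and $\frac{6}{13}$ terms appear, with $\frac{5}{11}+\frac{6}{13}=\frac{65+66}{143}=\frac{131}{143}$, matching the stated fourth case (the hypothesis $n>0$ excludes the degenerate $n=0$).

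The identification of each $X_{i,n}$ with the corresponding $B$-value is a routine but bookkeeping-heavy matching of products and factorials; I would present it once per value of $d$ and then reuse it. The genuinely delicate point — and the main obstacle — is ensuring the case partition modulo $60$ is both exhaustive and non-overlapping, and that each listed residue class is assigned exactly the right combination of extra terms: one must check, for instance, that $n\equiv 10,20,40,50\pmod{60}$ are precisely the classes with $n\equiv 0\pmod{10}$ but $n\not\equiv 0\pmod 6$, that $n\equiv 6,12,18,24,36,42,48,54\pmod{60}$ are exactly those with $n\equiv 0\pmod 6$ but $n\not\equiv 0\pmod{10}$ (with both the $12\mid n$ and $n\equiv 6\pmod{12}$ sub-cases yielding the same $\frac{6}{13}$), and that $n\equiv 0,30\pmod{60}$ are the only classes divisible by both $6$ and $10$. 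A secondary subtlety is confirming that no order-$d$ element with $d\notin\{1,2,3,11,13\}$ contributes: for any other divisor $d$ of $q\pm 1$ we need $13\equiv 0,1,2\pmod d$ (for $d\mid q-1$) or $13\equiv 0\pmod d$ (for $d\mid q+1$) to fail, which I would verify directly from Lemma~\ref{lem:4.2 バーンサイドの補題のもと} — noting in particular that even $d=3$ only contributes when $3\mid q-1$, i.e. $n$ even, consistent with the partition. Once these combinatorial checks are in place, the proposition follows by assembling the surviving $B$-terms in each residue class.
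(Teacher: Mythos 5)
Your proposal follows exactly the paper's route: apply the Cauchy--Frobenius--Burnside lemma, collapse the sum over $G$ into the class-weighted terms $B(13,a,b,d)$ supplied by Lemma~\ref{lem:4.2 バーンサイドの補題のもと}, identify $X_{1,n},X_{2,n},X_{3,n}$ with the $d=1,2,3$ contributions, and partition $n$ modulo $60$ to decide which of the constant terms $\tfrac{5}{11}$ and $\tfrac{6}{13}$ survive. The case split and the final assembly agree with the paper, so the argument as a whole goes through.

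However, your opening paragraph contains a self-contradiction you must repair: you assert that the cycle-type analysis forces $d\in\{1,2,3,13\}$ because ``$13\not\equiv 0,1,2\pmod{11}$.'' In fact $13\equiv 2\pmod{11}$, so elements of order $11$ \emph{do} fix $13$-subsets (two fixed points plus one $11$-cycle) and contribute $B(13,10,0,11)=\tfrac{5}{11}$ --- exactly the term you correctly include two paragraphs later. The correct list is $d\in\{1,2,3,11,13\}$, as in Proposition~\ref{prop:2.7 G_Bの元とnの条件}. Two smaller slips: for $d=3$ we have $13\equiv 1\pmod 3$, so by Lemma~\ref{lem:4.2 バーンサイドの補題のもと} the coefficient is $\phi(3)$, not $\phi(3)/2$; your intermediate expression $\frac{\phi(3)}{2}\cdot\frac{(q-4)(q-7)(q-10)}{3^4\cdot 4!}$ equals $\frac{1}{1944}(q-4)(q-7)(q-10)$, not the stated (and correct) $X_{3,n}=\frac{1}{972}(q-4)(q-7)(q-10)$. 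Also, in the paper's notation the order-$3$ term is $B(13,2,0,3)$ (the condition is $n\equiv 0\pmod 2$), not $B(13,3,0,3)$. None of these affects the final formula, but the order-$11$ sentence as written is false and should be deleted or corrected.
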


\begin{proof}
By Cauchy-Frobenius-Burnside lemma, we have:
\[N_{13}=\dfrac{1}{|G|}\left(\sum_{g\in G,|g|=1}\chi_{13}(g)+\sum_{g\in G,|g|=2}\chi_{13}(g)+\sum_{g\in G,|g|=3}\chi_{13}(g)+\sum_{g\in G,|g|=11}\chi_{13}(g)+\sum_{g\in G,|g|=13}\chi_{13}(g)\right).\]

Furthermore, since $\dfrac{1}{|G|}\displaystyle\sum_{g\in G, |g|=d}(\chi_{13}(g)) = B(13,a,b,d)$, by Lemma~\ref{lem:4.2 バーンサイドの補題のもと}, we obtain:
\begin{equation*}
\begin{cases}
B(13,1,0,1) = \frac{1}{6227020800}(q-2)(q-3)(q-4)(q-5)(q-6)(q-7)(q-8)(q-9)(q-10)(q-11) = X_{1,n}, \\
B(13,1,0,2) = \frac{1}{46080}(q-2)(q-4)(q-6)(q-8)(q-10) = X_{2,n}, \\
B(13,2,0,3) = \frac{1}{972}(q-4)(q-7)(q-10) = X_{3,n}, \\
B(13,10,0,11) = \frac{5}{11}, \\
B(13,12,0,13) = B(13,12,6,13) = \frac{6}{13}.
\end{cases}
\end{equation*}
We remark that $B(13,12,0,13) = B(13,12,6,13)$. Throughout the third case below, we will adopt the left-hand side of this equality as our notation. Depending on the value of $n$ modulo $60$, we obtain:
\begin{equation*}
N_{13} = 
\begin{cases}
B(13,1,0,1) + B(13,1,0,2) + B(13,2,0,3) = X_n, \\
\quad \text{if }n \equiv 2,4,8,14,16,22,26,28,32,34,38,44,46,52,56,58 \pmod{60}, \\[0.5em]
B(13,1,0,1) + B(13,1,0,2) + B(13,2,0,3) + B(13,10,0,11) = X_n + \dfrac{5}{11}, \\
\quad \text{if }n \equiv 10,20,40,50 \pmod{60}, \\[0.5em]
B(13,1,0,1) + B(13,1,0,2) + B(13,2,0,3) + B(13,12,0,13) = X_n + \dfrac{6}{13}, \\
\quad \text{if }n \equiv 6,12,18,24,36,42,48,54 \pmod{60}, \\[0.5em]
B(13,1,0,1) + B(13,1,0,2) + B(13,2,0,3) + B(13,10,0,11) + B(13,12,0,13) = X_{n} + \dfrac{131}{143}, \\
\quad \text{if }n \equiv 0,30 \pmod{60},\; n > 0, \\[0.5em]
B(13,1,0,1) + B(13,1,0,2) = X_{1,n} + X_{2,n}, \\
\quad \text{if}\,\,n\,\,\text{is odd}.
\end{cases}
\end{equation*}
\end{proof}

\section{Distributions of $3$-designs with block size $13$}

Recall that we have already determined that for any block $B\in \binom{X}{13}$, the orbit $G(B)$ forms a $3$-$(2^n+1, 13, \lambda_B)$ design, where $\lambda_B = \frac{1716}{|G_B|}$. By Table~\ref{table:4 GB候補2}, we obtain the possible values of $|G_B|$ and corresponding $\lambda_B$ as follows:
\[|G_B| \in \{1, 2, 3, 4, 12, 22, 26\}, \quad \lambda_B \in \{66,78,143,429,572,858,1716\}.\]

In this section, our objective is to determine the values of $m_{\lambda_B}$ for each $\lambda_B$ listed above, where $m_{\lambda_B}$ represents the number of distinct orbits that form $3$-$(2^n+1, 13, \lambda_B)$ designs. We note that the values of $m_{78}$ and $m_{66}$ have already been completely determined by Propositions~\ref{prop:3.2 m78=1} and \ref{prop:3.5 m66=1}, respectively. Therefore, we will focus on determining the values of $m_{\lambda_B}$ for the remaining parameters.

\begin{lem}[{\cite{4}}]\label{lem:5.1 連立方程式}
The following equations hold:
\[(A): \sum m_{\lambda} =N_{13}, \quad (B): \sum \lambda m_{\lambda}=\dbinom{2^{n}-2}{10}. \]
\end{lem}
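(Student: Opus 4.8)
The plan is to prove both identities by elementary double counting, relying on two facts already established in the excerpt: every $G$-orbit of a $13$-subset $B$ is itself a $3$-$(2^n+1,13,\lambda_B)$ design with $\lambda_B=1716/|G_B|$ (see~(\ref{eq:2.2 ラムダとG_B}) and Lemma~\ref{lem:2.2 デザインの自己同型群}), and by Theorem~\ref{thm:main} the value $\lambda_B$ lies in the finite set $\{66,78,143,429,572,858,1716\}$; moreover $\binom{X}{13}$ is the disjoint union of all these orbits.

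For $(A)$: partition the set of all $G$-orbits of $13$-subsets according to the attached value $\lambda_B$. Since $m_\lambda$ is by definition the number of orbits with parameter $\lambda$, summing over the finitely many possible values recovers every orbit exactly once, so $\sum_\lambda m_\lambda$ equals the total number of orbits, which is $N_{13}$ as determined in Proposition~\ref{prop:4.3 軌道の総数N13}.

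For $(B)$: consider the complete design $\mathcal{D}=(X,\binom{X}{13})$. Each $3$-subset $T\subset X$ is contained in exactly $\binom{(2^n+1)-3}{13-3}=\binom{2^n-2}{10}$ of the $13$-subsets of $X$, so $\mathcal{D}$ is a $3$-$(2^n+1,13,\binom{2^n-2}{10})$ design (equivalently, apply Lemma~\ref{lem:2.4 G_Bとの関わり}$(\beta)$ with $b=\binom{2^n+1}{13}$). On the other hand $\binom{X}{13}=\bigsqcup_{\text{orbits}}G(B)$, so for fixed $T$ the number of $13$-subsets containing $T$ equals the sum over all orbits of the number of blocks of that orbit containing $T$, namely $\sum_{\text{orbits}}\lambda_B$. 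Grouping orbits by their common $\lambda$-value gives $\binom{2^n-2}{10}=\sum_\lambda \lambda\, m_\lambda$. As a cross-check, one may instead count $13$-subsets directly: $\binom{2^n+1}{13}=\sum_{\text{orbits}}|G(B)|$ and, using $|G(B)|=|G|/|G_B|=\frac{q(q^2-1)}{1716}\lambda_B$, this reduces to $\sum_\lambda \lambda m_\lambda=\frac{1716}{q(q^2-1)}\binom{2^n+1}{13}=\frac{1}{10!}\prod_{i=2}^{11}(q-i)=\binom{q-2}{10}$.

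There is no substantive obstacle; both parts are routine counting arguments. The only care needed is in invoking the earlier structural results so that grouping orbits by $\lambda$-value is legitimate — in particular that each orbit genuinely is a $3$-design with parameter $1716/|G_B|$, that the possible parameters form the finite list in Theorem~\ref{thm:main}, and that distinct orbits sharing the same $\lambda$ are correctly tallied by the multiplicity $m_\lambda$.
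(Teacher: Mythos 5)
Your proof is correct. The paper does not prove this lemma at all --- it is imported from \cite{4} with only a citation --- and your double-counting argument is exactly the standard one underlying that reference: $(A)$ is the definitional partition of the $N_{13}$ orbits by their $\lambda_B$-value, and $(B)$ follows from counting, for a fixed $3$-subset $T$, the $\binom{2^n-2}{10}$ blocks of the complete design containing $T$ across the orbit decomposition, each orbit contributing $\lambda_B$ by virtue of being a $3$-design. Your cross-check via $\binom{2^n+1}{13}=\sum|G(B)|$ and $|G(B)|=|G|\lambda_B/1716$ is also correct, since $1716/13!=1/10!$. One small remark: you do not actually need Theorem~\ref{thm:main} to justify the grouping --- the finiteness of the set of possible $\lambda$-values already follows from $\lambda_B=1716/|G_B|$ with $|G_B|\mid 1716$, so the sums in $(A)$ and $(B)$ are over a finite index set regardless of which values are ultimately realizable.
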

By Lemma~\ref{lem:5.1 連立方程式}, we have:
\begin{align*}
    \text{(A)'} & : m_{1716} + m_{858} + m_{572} + m_{429} + m_{143} + m_{78} + m_{66} = N_{13}, \\
    \text{(B)'} & : 1716m_{1716} + 858m_{858} + 572m_{572} + 429m_{429} + 143m_{143} + 78m_{78} + 66m_{66} = \binom{2^{n}-2}{10}.
\end{align*}

\subsection{Calculation of $m_{143}$}
\begin{remark}
When $G_B \simeq A_4$, the corresponding value $\lambda_B = 143$ represents the parameter of the $3$-$(2^n+1,13,143)$ design formed by the orbit $G(B)$. We aim to determine $m_{143}$, which represents the total number of orbits forming such designs.
\end{remark}

\begin{defi}
For a given subgroup $Q\subset G$, we define:
\[\mathcal{B}_{Q}\coloneq \{B\in \binom{X}{13} \mid G_{B} \simeq Q\},\quad O_{d}\coloneq \{g\in G \mid |g|=d\},\quad \mathcal{H}\coloneq \{ H\subset G \mid H\simeq Q\}.\]
\end{defi}
From Table~\ref{Table:1 置換指標}, we have $|O_{2}|=4^{n}-1$, and for even $n$, $|O_{3}|=2^{n}(2^{n}+1)$. Note that the number of elements of order $d$ in $G$ can be calculated by $\frac{J_{d}\cdot |G|}{|N_{G}(g_{d})|}$, where $J_d$ is the number of conjugacy classes of elements of order $d$ in $G$.
By Cauchy-Frobenius-Burnside lemma, we obtain $m_{\lambda_B}=\frac{1}{|G|}\sum_{g\in G}|\mathcal{B}_{A_{4}}^{g}|$, where  $\mathcal{B}_{A_{4}}^{g} = \{B \in \mathcal{B}_{A_{4}} \mid g(B) = B\}$.

\begin{lem}\label{lem:5.4 A4の固定点はBの中で1つ}
Let $A_4 = \left<r,s\mid r^{3}=s^{2}=(sr)^{3}=\mathrm{id} \right> = G_B$. Then $r$ and $s$ have a common fixed point in $B$.
\end{lem}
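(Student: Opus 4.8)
The plan is to analyze the action of $A_4 = G_B$ on the $13$-element set $B$ using the cycle structure dictated by Lemma~\ref{lem:2.6 13元集合と固定点}. Since $|A_4| = 12$ and $13$ is coprime to $12$, the point stabilizers in $B$ cannot all be trivial; more precisely, by the orbit-counting / orbit-size considerations, every $A_4$-orbit on $B$ has size dividing $12$, namely in $\{1,3,4,6,12\}$, and these must sum to $13$. First I would enumerate the possible orbit-type decompositions of $13$ into such parts and show that each admissible decomposition forces at least one fixed point of some element of order $3$ and some element of order $2$; then I would pin down that in fact $r$ (an order-$3$ element) and $s$ (an order-$2$ element) can be chosen to share a fixed point in $B$.

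The key structural input is that an element $r$ of order $3$ in $G$ has exactly $a = 2$ fixed points on $X$ (Table~\ref{table:2 元の位数と固定点数とn条件}), and by Lemma~\ref{lem:2.6 13元集合と固定点} it fixes $r=1$ point inside $B$ (since $13 = 3\cdot 4 + 1$), contributing four $3$-cycles plus one fixed point; similarly an involution $s$ has one fixed point on $X$ and, since $13 = 2\cdot 6 + 1$, fixes exactly one point of $B$ and acts as six transpositions on the rest. So each of the four order-$3$ elements (two subgroups $\mathbb{Z}_3$, counted with inverses) and each of the three involutions of $A_4$ has a unique fixed point in $B$. The plan is then to show these fixed points coincide appropriately: the point $x_r$ fixed by $r$ in $B$ must be fixed by the whole cyclic group $\langle r\rangle$, and I would argue, exactly as in the proof of Proposition~\ref{prop:3.8 GBは6ではない}, that for an involution $s \in A_4$ the point $s(x_r)$ is again fixed by a conjugate of $r$ (using a dihedral or $A_4$ relation such as $srs = r'$ for another order-$3$ element $r'$), hence $s(x_r) \in B$ is the fixed point of that $\mathbb{Z}_3$; a short case analysis on the two $\mathbb{Z}_3$-subgroups and the Klein four-group of involutions then shows all these fixed points collapse to a single common point of $B$, which is in particular fixed by some chosen pair $r,s$.

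Concretely, I would proceed: (1) record the cycle structures of $r$ and $s$ on $B$ from Lemma~\ref{lem:2.6 13元集合と固定点}, obtaining unique fixed points $x_r, x_s \in B$; (2) use $\langle sr\rangle$ having order $3$ — so $sr$ also has a unique fixed point $x_{sr} \in B$ — together with the relations in $A_4$ to link $x_r$, $x_s$, $x_{sr}$; (3) deduce that the four fixed points of the four order-$3$ elements and the three fixed points of the involutions lie in a single $A_4$-orbit inside $B$ of size dividing both $3$ and $2$ relationships, forcing that orbit to be a fixed point; (4) conclude $x_r = x_s$. The main obstacle I anticipate is step (3): ruling out the scenario where, say, the two $\mathbb{Z}_3$-subgroups have distinct fixed points in $B$ that are swapped around by involutions, producing an $A_4$-orbit of size $3$ or $4$ of "special" points rather than a common fixed point. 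Handling this cleanly will likely require either passing to the matrix form (normalizing a common fixed point to $\infty$ as in Proposition~\ref{prop:3.8 GBは6ではない} and deriving a contradiction from the $A_4$ relations among upper-triangular matrices) or a careful counting argument showing the total number of non-fixed-point incidences is incompatible with $|B| = 13$. I expect the normalization-to-$\infty$ route to be the most economical, mirroring the technique already used successfully for the $D_6$ case.
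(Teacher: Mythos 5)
Your write-up stops short of a proof at exactly the decisive point: you correctly extract from Lemma~\ref{lem:2.6 13元集合と固定点} that $r$ fixes exactly one point $x_r\in B$ and $s$ exactly one point $x_s\in B$, but the claim $x_r=x_s$ is precisely your ``step (3)'', which you flag as an obstacle and defer to an unexecuted matrix computation or counting argument. That fallback is also shakier than you suggest: you cannot ``normalize a common fixed point to $\infty$'' before knowing one exists, so you would have to send $x_s$ to $\infty$ and then work with a completely general order-$3$ matrix $r$ subject to the $A_4$ relations; and unlike Proposition~\ref{prop:3.8 GBは6ではない} the goal here is not a contradiction ($A_4$ stabilizers do occur, cf.\ Lemma~\ref{lem:5.5 H'の中で共役}), so the analogy with the $D_6$ argument does not transfer directly. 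The paper instead closes the gap with a short computation inside the normal Klein four-subgroup $V=\{\mathrm{id},s,s',s''\}$ with $s'=rsr^2$, $s''=r^2sr$: the relation $(sr)^3=\mathrm{id}$ gives $s's''=s$, so if $a:=s''(x_s)\neq x_s$ then $s'(a)=x_s$ and $s(a)=s''s'(a)=a$, forcing the involution $s$ to have two fixed points, contrary to Table~\ref{Table:1 置換指標}; since the unique fixed point of $s''=r^{-1}sr$ is $r^{-1}(x_s)$, the conclusion $s''(x_s)=x_s$ is equivalent to $r(x_s)=x_s$. You should either reproduce an argument of this kind or finish one of your own routes.

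It is worth noting that your first paragraph actually contains the seed of a complete (and arguably cleaner) alternative, which you did not carry out. The $A_4$-orbits on $B$ have sizes in $\{1,3,4,6,12\}$. A size-$3$ orbit has point stabilizer the normal subgroup $V$, so every involution would fix all three of its points; a size-$6$ orbit has point stabilizers running over the three conjugate $\mathbb{Z}_2$'s, each twice, so every involution would fix two of its points; both contradict the fact that an involution of $G$ fixes exactly one point of $X$. A size-$4$ orbit contributes exactly one fixed point of $r$, so since $s$ forces exactly one size-$1$ orbit and $r$ fixes exactly one point of $B$, there can be no size-$4$ orbit. The only decomposition left is $13=12+1$, and the singleton orbit is a point fixed by all of $A_4$, in particular by $r$ and $s$. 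Had you completed this enumeration, your proof would be correct and genuinely different from the paper's; as submitted, the argument is not finished.
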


\begin{proof}
Consider $A_4 = \left<r,s\mid r^3=s^2=(sr)^3=\mathrm{id}\right> = G_B$. Let $\mathbb{Z}_2^2=\{\mathrm{id}, s,rsr^2,r^2sr\} \subset A_4$ and denote $s' \coloneqq rsr^2$ and $s'' \coloneqq r^2sr$. Let $x_s$ be the fixed point of $s$. We will derive a contradiction by assuming $r(x_s) \neq x_s$. Since $s's''s=\mathrm{id}$, we have $s's''s(x_s)=x_s$. As $s's''(x_s)=x_s$, and given our assumption $r(x_s) \neq x_s$, let us denote $s''(x_s) \coloneqq a \neq x_s$. Then we must have $s'(a)=x_s$. We have $s''s'(a)=a$, and since $s's''=s$, this implies $s(a)=a$. However, this means that $s$ has two fixed points, $a$ and $x_s$. This contradicts Table 1, which shows that elements of order 2 have exactly one fixed point. Therefore, $r$ and $s$ must share the same fixed point in $A_4$, and consequently, all elements in $A_4$ have the same fixed point.
\end{proof}

\begin{lem}\label{lem:5.5 H'の中で共役}
$G$ acts transitively on 
\[\mathcal{H}' = \{H \subset G \mid H = \left<r,s\right> \simeq A_4, |r|=3, |s|=2, \text{ where $\infty$ is the common fixed-point of $r$ and $s$}\}.\]
\end{lem}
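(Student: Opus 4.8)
The plan is to fix a single ``reference copy'' of $A_4$ inside $G$ that has $\infty$ as the common fixed point of its generators, and then show that every $H \in \mathcal{H}'$ is obtained from it by conjugation by an element of $G$. First I would pin down the shape of the elements involved: by the argument already used in the proof of Proposition~\ref{prop:3.8 GBは6ではない}, an element of order $3$ in $G$ fixing $\infty$ has matrix form $\begin{psmallmatrix}\alpha & y\\ 0 & \alpha^2\end{psmallmatrix}$ with $\alpha\in\{\omega,\omega^2\}$ and $y\in\GF(2^n)$, and an element of order $2$ fixing $\infty$ has the form $\begin{psmallmatrix}1 & x\\ 0 & 1\end{psmallmatrix}$ with $x\in\GF(2^n)^\times$. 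So any $H\in\mathcal{H}'$ is generated by one element $r$ of each such form. The key observation is that conjugation by an upper-triangular element $t=\begin{psmallmatrix}a & b\\ 0 & a^{-1}\end{psmallmatrix}$ (which also fixes $\infty$) acts on these parameters in a controlled, essentially affine way: it rescales $x$ by $a^2$ and sends $(\alpha,y)$ to $(\alpha, \text{(something linear in }y,b))$. Since $\GF(2^n)^\times$ acts transitively on $\GF(2^n)^\times$ by multiplication, I can first conjugate so that $x$ is normalized (say $x=1$), and then choose $b$ to kill $y$, reducing an arbitrary $H\in\mathcal{H}'$ to a fixed standard form.

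The steps, in order, would be: (1) record the matrix forms above for order-$2$ and order-$3$ elements of $G$ fixing $\infty$; (2) observe that for $H=\langle r,s\rangle\in\mathcal{H}'$, after possibly replacing $r$ by $r^2$ we may take $r=\begin{psmallmatrix}\omega & y\\ 0 & \omega^2\end{psmallmatrix}$ and $s=\begin{psmallmatrix}1 & x\\ 0 & 1\end{psmallmatrix}$ with $x\neq 0$; (3) compute the conjugate $t H t^{-1}$ for $t=\begin{psmallmatrix}a & b\\ 0 & a^{-1}\end{psmallmatrix}$, getting $trt^{-1}=\begin{psmallmatrix}\omega & a^2 y + b(\omega^2+\omega)\\ 0 & \omega^2\end{psmallmatrix}$ and $tst^{-1}=\begin{psmallmatrix}1 & a^2 x\\ 0 & 1\end{psmallmatrix}$ (up to the usual $\SL_2/\PSL_2$ scalar identification); (4) choose $a$ with $a^2 x = 1$ (possible since squaring is a bijection on $\GF(2^n)$) and then $b$ with $a^2 y + b(\omega^2+\omega)=0$ (possible since $\omega^2+\omega = 1 \neq 0$ in characteristic $2$); this sends $H$ to the fixed subgroup $H_0 = \left\langle\begin{psmallmatrix}\omega & 0\\ 0 & \omega^2\end{psmallmatrix},\begin{psmallmatrix}1 & 1\\ 0 & 1\end{psmallmatrix}\right\rangle$; (5) check that $H_0$ indeed lies in $\mathcal{H}'$ — i.e.\ that these two matrices generate an $A_4$ with the right fixed point — which is a finite verification (the two order-$2$ and order-$3$ elements generate a group of order $12$ with the presentation $\langle r,s\mid r^3=s^2=(sr)^3=\mathrm{id}\rangle$), and conclude transitivity.

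The main obstacle I anticipate is step (5): one must make sure the standard pair really does generate $A_4$ rather than some larger or different group, and in particular that $(sr)^3 = \mathrm{id}$ holds for the chosen normalization — this needs a short matrix computation and uses $\omega^2+\omega = 1$ crucially. A secondary subtlety is the $\SL(2,2^n)$ versus $\PSL(2,2^n)$ bookkeeping: in characteristic $2$ we have $\SL(2,2^n) = \PSL(2,2^n)$ since $-1=1$, so this is actually harmless, but I would state it explicitly to avoid confusion when manipulating matrices. Everything else is routine linear algebra over $\GF(2^n)$, and the transitivity of $\GF(2^n)^\times$ on itself plus the bijectivity of squaring in characteristic $2$ are exactly the facts that make the normalization go through.
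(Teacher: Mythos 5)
Your proposal is correct and follows essentially the same route as the paper: both normalize the parameters $(x,y)$ of the order-$2$ and order-$3$ generators by conjugating with an upper-triangular element fixing $\infty$, using that squaring is bijective on $\GF(2^n)$ and that $\omega+\omega^2=1$. The only blemish is a small computational slip in step (3) — the off-diagonal entry of $trt^{-1}$ is $a^2y+ab(\omega+\omega^2)=a^2y+ab$ rather than $a^2y+b(\omega^2+\omega)$ — but this does not affect the argument, since one can still solve for $b$ (namely $b=ay$) to reach the standard form.
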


\begin{proof}
Any element of $G$ fixing $\infty$ has the matrix form $\begin{mmatrix} a & b \\ 0 & a^{-1} \end{mmatrix}$ where $a \in \GF(2^n)^\times$ and $b \in \GF(2^n)$. Because if a linear function $f(x)=\frac{ax+b}{cx+d}$ fixes $\infty$, then $c=0$ and the determinant condition $ad-bc=ad=1$ implies $d=a^{-1}$. Elements of order $2$ fixing $\infty$ have the form $\begin{mmatrix} 1 & x \\ 0 & 1 \end{mmatrix}$ where $x \in \GF(2^n) \setminus \{0\}$, and elements of order $3$ fixing $\infty$ have the form $\begin{mmatrix} \alpha & y \\ 0 & \alpha^2 \end{mmatrix}$ where $y \in \GF(2^n)$ and $\alpha \in \{\omega,\omega^2\}$. Since $A_4 = \left<r,s \mid r^3=s^2=(sr)^3=1\right>$ has elements $\{id, r, r^2, s, rs, r^2s, sr, sr^2, rsr, rsr^2, r^2sr, r^2sr^2\}$, $\left<\begin{mmatrix} \alpha & y \\ 0 & \alpha^2 \end{mmatrix}, \begin{mmatrix} 1 & x \\ 0 & 1 \end{mmatrix}\right>$ is isomorphic to $A_4$. A subgroup of $G$ isomorphic to $A_4$ fixes $\infty$ in $B$ if and only if it is of the form 
\begin{equation}\label{eq:5.1 無限を固定するA4と同型な行列群の形}
\left\langle\begin{mmatrix} \alpha & y \\ 0 & \alpha^2 \end{mmatrix}, \begin{mmatrix} 1 & x \\ 0 & 1 \end{mmatrix}\right\rangle, x\in \GF(2^{n})^{\times}, y\in \GF(2^{n}), \alpha\in \{\omega,\omega^{2}\}.  
\end{equation}
For any $\gamma \in \GF(2^n)^{\times}$, let $c := (\gamma x^{-1})^{2^{n-1}}$. 
Note that $c^2 = (\gamma x^{-1})^{2^n} = \gamma x^{-1}.$
For any $\delta \in \GF(2^n)$, let $b := \delta + c^2y$.
Define $g = \begin{mmatrix} c & bc^{-1} \\ 0 & c^{-1} \end{mmatrix} \in G$. By direct calculation, we have:
\[g\begin{mmatrix} 1 & x \\ 0 & 1 \end{mmatrix}g^{-1} = \begin{mmatrix} 1 & \gamma \\ 0 & 1 \end{mmatrix},\quad 
g\begin{mmatrix} \alpha & y \\ 0 & \alpha^2 \end{mmatrix}g^{-1} = \begin{mmatrix} \alpha & c^2y+b \\ 0 & \alpha^2 \end{mmatrix} = \begin{mmatrix} \alpha & \delta \\ 0 & \alpha^2 \end{mmatrix}.\] Therefore, for any $\gamma \in \GF(2^n)^{\times}$ and any $\delta \in \GF(2^n)$, we can find a $g \in G$ such that:
\[g\left\langle\begin{mmatrix} \alpha & y \\ 0 & \alpha^2 \end{mmatrix}, \begin{mmatrix} 1 & x \\ 0 & 1 \end{mmatrix}\right\rangle g^{-1} = \left\langle\begin{mmatrix} \alpha & \delta \\ 0 & \alpha^2 \end{mmatrix}, \begin{mmatrix} 1 & \gamma \\ 0 & 1 \end{mmatrix}\right\rangle.\]
\end{proof}

\begin{lem}\label{lem:5.6 すべてのA4はGの中で共役}
$G$ acts transitively on
\[\mathcal{H} = \{H \subset G \mid H = \left<r,s\right> \simeq A_4\}.\]
\end{lem}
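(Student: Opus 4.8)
The plan is to reduce everything to Lemma~\ref{lem:5.5 H'の中で共役} by showing that every subgroup $H\in\mathcal{H}$ is $G$-conjugate to one lying in $\mathcal{H}'$, i.e.\ to an $A_4$ all of whose elements have $\infty$ as a common fixed point. So I would proceed in three steps: (i) prove that every subgroup of $G$ isomorphic to $A_4$ has a point of $X$ fixed by all of its elements; (ii) move that point to $\infty$ by an element of $G$, landing inside $\mathcal{H}'$; (iii) invoke Lemma~\ref{lem:5.5 H'の中で共役}, which states that $G$ acts transitively on $\mathcal{H}'$, and glue the conjugations together.

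For step (i), let $H=\langle r,s\rangle\simeq A_4$ with $|r|=3$ and $|s|=2$. The argument given in the proof of Lemma~\ref{lem:5.4 A4の固定点はBの中で1つ} applies verbatim here: it uses only that every involution of $G=\PSL(2,2^n)$ has a \emph{unique} fixed point on $X$ (Lemma~\ref{lem:2.3 置換指標} and Table~\ref{Table:1 置換指標}) together with the defining relations of $A_4$, and never the hypothesis $H=G_B$. Hence the generators $r$ and $s$ share a fixed point $p\in X$, and since $H=\langle r,s\rangle$, the point $p$ is fixed by every element of $H$. (Alternatively, one can observe that the three involutions of the normal Klein four subgroup $V\trianglelefteq H$ pairwise commute, that two commuting involutions of $\PSL(2,2^n)$ must share their unique fixed point, and that $r$, normalizing $V$, therefore fixes that common point as well.)

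For steps (ii) and (iii): since $G$ acts transitively on $X$ (indeed sharply $3$-transitively), there is $g\in G$ with $g(p)=\infty$, and then $gHg^{-1}$ is a subgroup isomorphic to $A_4$ whose elements all fix $\infty$, so $gHg^{-1}\in\mathcal{H}'$. Now given any two $H_1,H_2\in\mathcal{H}$, conjugate each into $\mathcal{H}'$ as above and apply Lemma~\ref{lem:5.5 H'の中で共役} to carry the image of $H_1$ onto the image of $H_2$; composing the three conjugating elements exhibits a single element of $G$ conjugating $H_1$ to $H_2$. This proves that $G$ acts transitively on $\mathcal{H}$. (When $n$ is odd, $\mathcal{H}=\varnothing$ by Lemma~\ref{lem:2.5 dicson定理}(f) and the statement is vacuous, so one may assume $n$ even.)

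The only real content is step (i) — checking that the argument of Lemma~\ref{lem:5.4 A4の固定点はBの中で1つ} does not depend on $H$ being a stabilizer, and that the common fixed point of the chosen generators is fixed by all of $H$ (immediate, since they generate). Everything after that is routine transitivity bookkeeping, so I expect no genuine obstacle.
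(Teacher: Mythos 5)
Your proposal is correct and follows essentially the same route as the paper: establish a common fixed point for the whole $A_4$, conjugate it to $\infty$ by $3$-transitivity, and then invoke Lemma~\ref{lem:5.5 H'の中で共役} for the subgroups in $\mathcal{H}'$. Your step (i) is in fact slightly more careful than the paper's own write-up, which quotes Lemma~\ref{lem:5.4 A4の固定点はBの中で1つ} (stated only for $A_4 = G_B$) without remarking, as you do, that its argument uses only the uniqueness of fixed points of involutions and therefore applies to an arbitrary $A_4 \le G$.
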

\begin{proof}
Let $H = \left<\begin{psmallmatrix} 1 & x \\ 0 & 1 \end{psmallmatrix}, \begin{psmallmatrix} \alpha & y \\ 0 & \alpha^2 \end{psmallmatrix}\right>$ for some $x\in \GF(2^{n})^{\times}, y \in \GF(2^n), \alpha \in \{\omega,\omega^2\}$. According to (\ref{eq:5.1 無限を固定するA4と同型な行列群の形}), this is the form of any subgroup of $G$ isomorphic to $A_4$ fixing $\infty$ in $B$. Let $H_1$ be any subgroup of $G$ isomorphic to $A_4$ with fixed point $p_1 \in B$.
By Lemma~\ref{lem:5.4 A4の固定点はBの中で1つ}, any $A_4$ subgroup of $G$ that appears as $G_B$ has generators sharing a common fixed point. Since $G$ acts $3$-transitively on $X$, there exists $g \in G$ such that $g(p_1) = \infty$. Then $gH_1g^{-1}$ is a subgroup of $G$ isomorphic to $A_4$ fixing $\infty$. By Lemma~\ref{lem:5.5 H'の中で共役}, $gH_1g^{-1}$ is conjugate to $H$ in $G$. Therefore, any subgroup of $G$ isomorphic to $A_4$ is conjugate to $H$, implying that $G$ acts transitively on $\mathcal{H}$.
\end{proof}

\begin{lem}\label{lem:5.7 Hの正規化群}
Let $\mathcal{H}=\{H\subset G\mid H=\langle r,s\rangle\simeq A_4\}$. Then $H$ is self-normalizing.
\end{lem}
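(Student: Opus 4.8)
The plan is to show that $N_G(H)=H$ whenever $H\cong A_4$, by using Lemma~\ref{lem:5.6 すべてのA4はGの中で共役} (all such $A_4$ are conjugate, so it suffices to treat one representative) together with the fact that the common fixed point of the generators is canonically attached to $H$. First I would fix $H$ to be of the standard form (\ref{eq:5.1 無限を固定するA4と同型な行列群の形}), so that $\infty$ is the unique point of $X$ fixed by every nonidentity element of the cyclic $\mathbb{Z}_3$ subgroup of $H$ (an order-$3$ element of $G$ has exactly two fixed points in general, but the order-$2$ elements of $H$ each have a single fixed point, and by Lemma~\ref{lem:5.4 A4の固定点はBの中で1つ} all of $H$ shares $\infty$; so $\infty$ is intrinsically the common fixed point of the involutions of $H$). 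Hence any $g\in N_G(H)$ permutes the fixed-point sets of the involutions of $H$, and therefore must fix $\infty$: $g$ lies in the Borel subgroup $B_\infty=\left\{\begin{mmatrix} a & b \\ 0 & a^{-1}\end{mmatrix}\right\}$ of upper-triangular matrices.

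Next I would compute $N_{B_\infty}(H)$ directly. Writing $g=\begin{mmatrix} a & b \\ 0 & a^{-1}\end{mmatrix}$, conjugation by $g$ sends $\begin{mmatrix} 1 & x \\ 0 & 1\end{mmatrix}\mapsto\begin{mmatrix} 1 & a^2 x \\ 0 & 1\end{mmatrix}$ and $\begin{mmatrix} \alpha & y \\ 0 & \alpha^2\end{mmatrix}\mapsto\begin{mmatrix} \alpha & a^2 y + b(\alpha^2+\alpha) \\ 0 & \alpha^2\end{mmatrix}$ (up to the scalar $\pm I$, which is trivial in $G$ since we are in characteristic $2$). The $\mathbb{Z}_2^2$-part of $H$ is the subgroup $U_H=\left\{\begin{mmatrix} 1 & u \\ 0 & 1\end{mmatrix}\mid u\in W\right\}$ where $W=\{0,x,?,?\}$ is the $\GF(2)$-subspace of $\GF(2^n)$ generated by the "$x$-entries" occurring in the four elements of the Klein four-group; one checks $W$ is in fact a $2$-dimensional $\GF(2)$-space (or can be arranged so via the explicit generators $s,rsr^2,r^2sr$). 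Normalizing $H$ forces $a^2 W = W$ and then forces the conjugate of the order-$3$ generator to again lie in $H$; tracking the $\mathbb{Z}_3$-action on $W$ (which is the nontrivial irreducible action of $\mathbb{Z}_3$ on $\GF(4)\cong W$) pins $a^2$ to be a cube root of unity and then $b$ to lie in $W$. Counting: the resulting normalizer has order $|U_H|\cdot|\mathbb{Z}_3|=4\cdot 3=12=|H|$, and since $H\le N_G(H)$ always, we conclude $N_G(H)=H$.

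The main obstacle I anticipate is the bookkeeping in the last step — precisely identifying the $\GF(2)$-span $W$ of the translation entries of the Klein four subgroup and verifying that the $\mathbb{Z}_3$-conjugation acts on $W$ as the order-$3$ field automorphism of $\GF(4)$, so that requiring $a^2W=W$ together with $\langle a\rangle$ normalizing the $\mathbb{Z}_3$-factor leaves only $a^2\in\{1,\omega,\omega^2\}$ and $b\in W$. Everything else (reduction to $B_\infty$ via the canonical fixed point, the matrix conjugation formulas, and the final order count) is routine. An alternative, slicker route that avoids the coordinate computation: since all $A_4$'s in $G$ are conjugate (Lemma~\ref{lem:5.6 すべてのA4はGの中で共役}), the number of subgroups isomorphic to $A_4$ is $|G|/|N_G(H)|$; if one can independently count these subgroups — e.g.\ each is determined by its unique fixed point ($2^n+1$ choices) together with the number of standard-form $A_4$'s fixing $\infty$, which by the proof of Lemma~\ref{lem:5.5 H'の中で共役} is parametrized by $(\alpha,x,y)$ modulo the evident redundancy — then comparing with $|G|=2^n(2^{2n}-1)$ yields $|N_G(H)|=12$ directly. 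I would present the direct conjugation argument as the primary proof and mention the counting argument as a remark.
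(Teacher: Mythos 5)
Your proposal is correct and follows essentially the same route as the paper: reduce to the standard representative $H=\left\langle\begin{psmallmatrix}1&1\\0&1\end{psmallmatrix},\begin{psmallmatrix}\omega^2&0\\0&\omega\end{psmallmatrix}\right\rangle$ via Lemma~\ref{lem:5.6 すべてのA4はGの中で共役}, note that a normalizing element must fix $\infty$ and hence be upper triangular, and then extract from the conjugation relations that the diagonal entry satisfies $a^2\in\{1,\omega,\omega^2\}$ while the off-diagonal entry ranges over a set of size $4$, giving $|N_G(H)|=12=|H|$. Your explicit justification that $N_G(H)$ must preserve the common fixed point of the involutions is in fact spelled out more carefully than in the paper (which simply asserts the upper-triangular form); the only slip is that the conjugate of $\begin{psmallmatrix}\alpha&y\\0&\alpha^2\end{psmallmatrix}$ by $\begin{psmallmatrix}a&b\\0&a^{-1}\end{psmallmatrix}$ has upper-right entry $a^2y+ab(\alpha+\alpha^2)$ rather than $a^2y+b(\alpha+\alpha^2)$, which does not affect the final count.
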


\begin{proof}
By (\ref{eq:5.1 無限を固定するA4と同型な行列群の形}) of Lemma~\ref{lem:5.5 H'の中で共役}, when $A_4=G_B$ and $\infty$ is a fixed point in $B$, then $A_4$ can be written as $\langle\begin{psmallmatrix} 1 & x \\ 0 & 1 \end{psmallmatrix}, \begin{psmallmatrix} \alpha & y \\ 0 & \alpha^2 \end{psmallmatrix}\rangle $ for some $ x\in \GF(2^n)^\times, y\in \GF(2^n), \alpha\in \{\omega,\omega^2\}$. Furthermore, by Lemma~\ref{lem:5.6 すべてのA4はGの中で共役}, all subgroups isomorphic to $A_4$ are conjugate in $G$. Therefore, it suffices to prove $N_G(H)=H$ for $H=\langle\begin{psmallmatrix} 1 & 1 \\ 0 & 1 \end{psmallmatrix}, \begin{psmallmatrix} \omega^2 & 0 \\ 0 & \omega \end{psmallmatrix}\rangle$.
Since $g\in N_G(H)$ normalizes $H$, we can assume $g=\begin{psmallmatrix} g_1 & g_2 \\ 0 & g_1^{-1} \end{psmallmatrix}$ where $g_1\in \GF(2^n)^\times$ and $g_2\in \GF(2^n)$. By conjugation:
\[g\begin{psmallmatrix} \omega^2 & 0 \\ 0 & \omega \end{psmallmatrix}g^{-1} = \begin{psmallmatrix} \omega^2 & g_1g_2 \\ 0 & \omega \end{psmallmatrix} \in H.\]
This implies $g_1g_2 \in \{0,1,\omega,\omega^2\}$. Therefore, $g_2=jg_1^{-1}$ for some $j\in \{0,1,\omega,\omega^2\}$, giving $g = \begin{psmallmatrix} g_1 & jg_1^{-1} \\ 0 & g_1^{-1} \end{psmallmatrix}.$
For any $x\in \{0,1,\omega,\omega^2\}$, we have $g\begin{psmallmatrix} 1 & x \\ 0 & 1 \end{psmallmatrix}g^{-1} = \begin{psmallmatrix} 1 & xg_1^2 \\ 0 & 1 \end{psmallmatrix} \in H.$
This implies $xg_1^2 \in \{0,1,\omega,\omega^2\}$, and therefore $g_1\in \{1,\omega,\omega^2\}$. Hence $N_G(H)=\{\begin{psmallmatrix} g_1 & jg_1^{-1} \\ 0 & g_1^{-1} \end{psmallmatrix}\mid j\in \{0,1,\omega,\omega^2\}, g_1\in \{1,\omega,\omega^2\}\}=H$.
\end{proof}

\begin{remark}\label{rem:5.8 バーンサイドの補題適用の仕方}
To determine $m_{143}$, we apply the Cauchy-Frobenius-Burnside lemma, which gives:
\[m_{143} = \frac{1}{|G|}\sum_{g\in G}|\mathcal{B}_{A_4}^{g}|.\]
Since elements of order $2$ and $3$ are all conjugate within their respective orders (as shown in Table~\ref{Table:1 置換指標}), and considering that $|O_2| = 4^n-1$ and $|O_3| = 2^n(2^n+1)$ for even $n$, we can simplify this to:
\[m_{143} = \frac{1}{|G|}\left(|\mathcal{B}_{A_4}| + (4^n - 1)|\mathcal{B}_{A_4}^{g_1}| + 2^n(2^n + 1)|\mathcal{B}_{A_4}^{g_2}|\right),\]
where $g_1=\begin{psmallmatrix} 1 & 1 \\ 0 & 1 \end{psmallmatrix}$and $g_2=\begin{psmallmatrix} \omega^2 & 0 \\ 0 & \omega \end{psmallmatrix}$.
\end{remark}

\begin{lem}\label{lem:5.9 BA4}
Let $\mathcal{B}_{A_{4}}\coloneq \{B\in \binom{X}{13} \mid G_{B} \simeq A_{4}\}$. Then
\[|\mathcal{B}_{A_4}| = \begin{cases} 
\dfrac{2^n(4^n-1)(2^n-4)}{144} & \text{if}\,\,n\,\,\text{is even},  \\
0 & \text{if}\,\,n\,\,\text{is odd}.
\end{cases}\]
\end{lem}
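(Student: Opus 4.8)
The plan is to count $\mathcal{B}_{A_4}$ via the orbit--counting principle together with the conjugacy results already established. If $n$ is odd, Lemma~\ref{lem:2.5 dicson定理}(f) shows $G$ has no subgroup isomorphic to $A_4$, so $\mathcal{B}_{A_4}=\emptyset$ and we are done; assume $n$ even henceforth. Write $\mathcal{B}_{A_4}=\bigsqcup_{H\in\mathcal{H}}\{B\in\binom{X}{13}\mid G_B=H\}$, a disjoint union since each $B$ has a unique stabilizer. Conjugation by $g\in G$ induces a bijection $\{B\mid G_B=H\}\to\{B\mid G_B=gHg^{-1}\}$, so by Lemma~\ref{lem:5.6 すべてのA4はGの中で共役} these fibers all have a common cardinality $N$, whence $|\mathcal{B}_{A_4}|=|\mathcal{H}|\cdot N$. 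By Lemmas~\ref{lem:5.6 すべてのA4はGの中で共役} and \ref{lem:5.7 Hの正規化群}, $|\mathcal{H}|=[G:N_G(H)]=|G|/12=2^n(4^n-1)/12$, so everything reduces to computing $N$, the number of $13$-subsets $B$ with $G_B$ equal to a fixed $A_4$-subgroup, for which I would take $H_0=\langle\begin{psmallmatrix}1&1\\0&1\end{psmallmatrix},\begin{psmallmatrix}\omega^2&0\\0&\omega\end{psmallmatrix}\rangle$ from Lemma~\ref{lem:5.7 Hの正規化群}.

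The key step is the orbit structure of $H_0$ on $X$. Realized as linear functions, $H_0=\{x\mapsto\omega^i x+a\mid i\in\{0,1,2\},\ a\in\GF(4)\}$, the affine group of $\GF(4)$ fixing $\infty$; that the translation part is exactly $\GF(4)$ follows from $1+\omega+\omega^2=0$ in characteristic $2$, and $n$ even ensures $\GF(4)\subseteq\GF(2^n)$. A direct check shows the $H_0$-orbits on $X$ are: the fixed point $\{\infty\}$; the orbit $\GF(4)$ of length $4$ (point stabilizer $\mathbb{Z}_3$); and $(2^n-4)/12$ orbits of length $12$ on which $H_0$ acts freely, together covering $\GF(2^n)\setminus\GF(4)$. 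Any $B\in\binom{X}{13}$ with $H_0\subseteq G_B$ is a union of $H_0$-orbits, and the only way to write $13$ as a sum of the available lengths $1,4,12$ is $13=1+12$; hence such a $B$ equals $\{\infty\}\cup O$ for one of the $(2^n-4)/12$ length-$12$ orbits $O$. Conversely each such $B$ is $H_0$-invariant, so $H_0\subseteq G_B$; since $A_4\cong H_0\subseteq G_B$ forces $12\mid|G_B|$ while Table~\ref{table:4 GB候補2} restricts $|G_B|$ to $\{1,2,3,4,12,22,26\}$, we conclude $|G_B|=12$ and therefore $G_B=H_0$. Thus $N=(2^n-4)/12$, giving
\[|\mathcal{B}_{A_4}|=\frac{|G|}{12}\cdot\frac{2^n-4}{12}=\frac{2^n(4^n-1)(2^n-4)}{144}.\]

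I expect the orbit-structure computation for $H_0$ to be the main point (though not a hard one): one must identify the translation subgroup as $\GF(4)$, verify that $H_0$ acts freely off $\GF(4)\cup\{\infty\}$ so that every remaining orbit has length exactly $12$, and confirm that $13=1+12$ is genuinely the unique decomposition into orbit lengths, in particular ruling out $4+\cdots$ and $1+4+\cdots$. Once these are in hand, the passage from $H_0\subseteq G_B$ to $G_B=H_0$ is immediate from Table~\ref{table:4 GB候補2}, and the final count is a single multiplication.
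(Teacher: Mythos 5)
Your proof is correct and follows essentially the same route as the paper: count the $A_4$-subgroups via the self-normalizing property ($|\mathcal{H}|=|G|/12$) and multiply by the number of admissible $13$-sets per subgroup, namely $(2^n-4)/12$. You are in fact somewhat more explicit than the paper at two points it leaves implicit --- the verification that $13=1+12$ is the only decomposition into $H_0$-orbit lengths, and the step from $H_0\subseteq G_B$ to $G_B=H_0$ via Table~\ref{table:4 GB候補2} --- but the underlying argument is the same.
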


\begin{proof}
Let $\mathcal{H}=\{H\subset G\mid H\simeq A_4\}$, and $\mathcal{B}_{A_{4}}\coloneq \{B\in \binom{X}{13} \mid G_{B} \simeq A_{4}\}$. By Table~\ref{table:2 元の位数と固定点数とn条件}, when $n$ is odd, there are no elements of order $3$ in $G$, and therefore $\mathcal{B}_{A_4} = 0$. For even $n$, we will determine the value of $\mathcal{B}_{A_4}$.

Under the conjugation action of $G$ on $\mathcal{H}$, the stabilizer of $H\in \mathcal{H}$ coincides with its normalizer $N_G(H)$. By Lemma~\ref{lem:5.7 Hの正規化群}, we have $|N_G(H)|=|H|=12$. Therefore, by the orbit-stabilizer theorem:
\[|\mathcal{H}| = \dfrac{|G|}{|N_G(H)|} = \dfrac{2^n(4^n-1)}{12}.\]
By Lemma~\ref{lem:5.6 すべてのA4はGの中で共役}, all subgroups isomorphic to $A_4$ are conjugate in $G$, we can assume $H=\left\langle\begin{psmallmatrix} 1 & 1 \\ 0 & 1 \end{psmallmatrix}, \begin{psmallmatrix} \omega^2 & 0 \\ 0 & \omega \end{psmallmatrix}\right\rangle$. For $a\in \GF(2^{n})$, $B\setminus\{\infty\}=H(a)$ if and only if $a\notin\{0,1,\omega,\omega^2\}$, giving $2^n-4$ possible choices for $a$. Since $H$ acts transitively on $B\setminus\{\infty\}$, the number of possible choices for $B$ given $H$ is $\frac{2^n-4}{12}$. For each $H\in \mathcal{H}$, we have $\frac{2^n-4}{12}$ possible choices for $B$. Therefore:
\[|\mathcal{B}_{A_4}| = |\mathcal{H}| \cdot \dfrac{2^n-4}{12} = \dfrac{2^n(4^n-1)}{12}\cdot \dfrac{2^n-4}{12} = \dfrac{2^n(4^n-1)(2^n-4)}{144}.\]
\end{proof}

\begin{lem}\label{lem:5.10 BA4g1}
Let $g_1 = \begin{psmallmatrix} 1 & 1 \\ 0 & 1 \end{psmallmatrix}$,  $\mathcal{B}_{A_4}^{g_1} = \{B\in \binom{X}{13} \mid G_B \simeq A_4, g_1(B)=B\}$. Then
\[|\mathcal{B}_{A_4}^{g_1}| = \begin{cases} 
\dfrac{2^n(2^n-4)}{48} & \text{if}\,\,n\,\,\text{is even},  \\
0 & \text{if}\,\,n\,\,\text{is odd}.
\end{cases}\]

\end{lem}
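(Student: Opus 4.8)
The plan is to treat the two cases separately and, in the even case, to avoid enumerating the blocks fixed by $g_1$ directly, instead reducing to the already-computed quantity $|\mathcal{B}_{A_4}|$ of Lemma~\ref{lem:5.9 BA4} through a double count. The odd case is immediate: $\mathcal{B}_{A_4}^{g_1}\subseteq\mathcal{B}_{A_4}$, and $\mathcal{B}_{A_4}=\emptyset$ for odd $n$ by Lemma~\ref{lem:5.9 BA4} (there are no elements of order $3$ in $G$ by Table~\ref{table:2 元の位数と固定点数とn条件}). So from now on assume $n$ is even, and first record that $g_1=\begin{psmallmatrix}1&1\\0&1\end{psmallmatrix}$ has order $2$ (Lemma~\ref{lem:3.3 位数2の元の必要十分条件}), hence it is one of the involutions that can sit inside a stabilizer isomorphic to $A_4$.

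Next I would set up the double count on the incidence set $S=\{(B,t)\mid B\in\mathcal{B}_{A_4},\ t\in G_B,\ |t|=2\}$. Counting by the first coordinate: for each $B\in\mathcal{B}_{A_4}$ the stabilizer $G_B\simeq A_4$ has a unique Sylow $2$-subgroup, a Klein four group, which supplies exactly $3$ involutions (every order-$2$ element of $A_4$ lies in it), so $|S|=3\,|\mathcal{B}_{A_4}|$. Counting by the second coordinate: $|S|=\sum_{t\in O_2}|\mathcal{B}_{A_4}^{t}|$, and I would argue each summand equals $|\mathcal{B}_{A_4}^{g_1}|$. Indeed, all involutions of $G$ form a single conjugacy class (Table~\ref{Table:1 置換指標}); and if $t'=hth^{-1}$ then $B\mapsto h(B)$ is a bijection $\mathcal{B}_{A_4}^{t}\to\mathcal{B}_{A_4}^{t'}$, since $G_{h(B)}=hG_Bh^{-1}\simeq A_4$ and $t'(h(B))=h(t(B))=h(B)$. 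Hence $|\mathcal{B}_{A_4}^{t}|=|\mathcal{B}_{A_4}^{g_1}|$ for every involution $t$, and with $|O_2|=4^n-1$ we get $|S|=(4^n-1)\,|\mathcal{B}_{A_4}^{g_1}|$. Equating the two expressions and inserting $|\mathcal{B}_{A_4}|=\frac{2^n(4^n-1)(2^n-4)}{144}$ from Lemma~\ref{lem:5.9 BA4} gives $(4^n-1)\,|\mathcal{B}_{A_4}^{g_1}|=\frac{2^n(4^n-1)(2^n-4)}{48}$, hence $|\mathcal{B}_{A_4}^{g_1}|=\frac{2^n(2^n-4)}{48}$, which is the claimed value.

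I do not expect a genuine obstacle; the only step needing care is the conjugation-invariance $|\mathcal{B}_{A_4}^{t}|=|\mathcal{B}_{A_4}^{g_1}|$, which combines the single-conjugacy-class fact for involutions with the observation that conjugation preserves both the condition $G_B\simeq A_4$ and the condition $t(B)=B$. As a sanity check and alternative, one can argue directly: since $g_1$ fixes only $\infty$, any $A_4\simeq G_B$ containing $g_1$ has $\infty$ as its unique common fixed point by Lemma~\ref{lem:5.4 A4の固定点はBの中で1つ}, so it has the shape (\ref{eq:5.1 無限を固定するA4と同型な行列群の形}); one then checks that exactly $2^{n-2}$ such $A_4$ contain $\begin{psmallmatrix}1&1\\0&1\end{psmallmatrix}$ and that each such $A_4$ equals $G_B$ for exactly $\frac{2^n-4}{12}$ blocks $B$ (the orbit count from the proof of Lemma~\ref{lem:5.9 BA4}), again yielding $2^{n-2}\cdot\frac{2^n-4}{12}=\frac{2^n(2^n-4)}{48}$. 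I would present the double-count version, since it quotes Lemma~\ref{lem:5.9 BA4} verbatim and keeps the bookkeeping minimal.
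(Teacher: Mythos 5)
Your argument is correct, and it takes a genuinely different route from the paper. The paper's proof works inside the centralizer of $g_1$: it computes $C_G(g_1)=\left\{\begin{psmallmatrix}1&h_2\\0&1\end{psmallmatrix}\mid h_2\in \GF(2^n)\right\}$ of order $2^n$, shows that the stabilizer of $H$ under conjugation by this centralizer is $N_{C}(H)\simeq\mathbb{Z}_2^2$ of order $4$, and concludes by the orbit--stabilizer theorem that exactly $2^{n-2}$ subgroups isomorphic to $A_4$ contain $g_1$; multiplying by the $\frac{2^n-4}{12}$ blocks per subgroup from Lemma~\ref{lem:5.9 BA4} gives the result (this is precisely the ``alternative'' you sketch at the end). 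Your double count over pairs $(B,t)$ with $t$ an involution of $G_B$ bypasses the matrix computations entirely: it needs only that $A_4$ contains exactly three involutions, that the involutions of $G$ form a single conjugacy class of size $|O_2|=4^n-1$ (Table~\ref{Table:1 置換指標}), the conjugation-equivariance $|\mathcal{B}_{A_4}^{t}|=|\mathcal{B}_{A_4}^{hth^{-1}}|$ which you correctly identify as the one step requiring justification, and the value of $|\mathcal{B}_{A_4}|$ from Lemma~\ref{lem:5.9 BA4} --- so the dependency on that lemma is the same as the paper's, just in aggregated form. The arithmetic checks out: $3\cdot\frac{2^n(4^n-1)(2^n-4)}{144}=(4^n-1)\cdot\frac{2^n(2^n-4)}{48}$. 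What the paper's version buys is an explicit description of the set $\mathcal{H}_{g_1}$, but your framework is more economical and scales: the identical count with the eight order-$3$ elements of $A_4$ against $|O_3|=2^n(2^n+1)$ immediately reproduces Lemma~\ref{lem:5.11 BA4g2} as well.
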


\begin{proof}
We write $\mathcal{B}_{A_4}^{g_1} = \{B\in \binom{X}{13} \mid G_B \simeq A_4, g_1(B)=B\}$. By Table~\ref{table:2 元の位数と固定点数とn条件}, when $n$ is odd, there are no elements of order $3$ in $G$, and therefore $\mathcal{B}_{A_4}^{g_{1}} = 0$. For even $n$, we will determine the value of $\mathcal{B}_{A_4}^{g_{1}}$. By Lemma~\ref{lem:5.6 すべてのA4はGの中で共役}, all subgroups isomorphic to $A_4$ are conjugate in $G$. Moreover, Lemma~\ref{lem:5.5 H'の中で共役} shows that when $\infty$ is a fixed point in $B$, we can assume $H=\left\langle g_1,\begin{psmallmatrix} \omega^2 & 0 \\ 0 & \omega \end{psmallmatrix}\right\rangle$. We write $\mathcal{H}_{g_{1}}=\{H\subset G\mid H\simeq A_4, g_{1}\in H\}$.

We first determine the centralizer $C\coloneq C_G(g_1)$. Any element $h\in C$ has the form $h=\begin{psmallmatrix} h_1 & h_2 \\ 0 & h_1^{-1} \end{psmallmatrix}$ with $h_1\in \GF(2^n)^\times$ and $h_2\in \GF(2^n)$. The conjugation equation $hg_1h^{-1}=\begin{psmallmatrix} 1 & h_1^2 \\ 0 & 1 \end{psmallmatrix}=g_1$
yields $h_1=1$. Thus we have
$C=\left\{\begin{psmallmatrix} 1 & h_2 \\ 0 & 1 \end{psmallmatrix}\mid h_2\in \GF(2^n)\right\}.$

We next determine $N_C(H)$. For $\begin{psmallmatrix} 1 & h_2 \\ 0 & 1 \end{psmallmatrix}\in C$, conjugation gives $\begin{psmallmatrix} 1 & h_2 \\ 0 & 1 \end{psmallmatrix}\begin{psmallmatrix} \omega^2 & 0 \\ 0 & \omega \end{psmallmatrix}\begin{psmallmatrix} 1 & h_2 \\ 0 & 1 \end{psmallmatrix}=\begin{psmallmatrix} \omega^2 & h_2 \\ 0 & \omega \end{psmallmatrix}\in H.$
This equation implies $h_2\in \{0,1,\omega,\omega^2\}$. Therefore we obtain
$N_C(H)=\left\{\begin{psmallmatrix} 1 & h_2 \\ 0 & 1 \end{psmallmatrix}\mid h_2\in \{0,1,\omega,\omega^2\}\right\}\simeq \mathbb{Z}_2^2.$ Under conjugation by elements of $C$, the stabilizer of $H\in \mathcal{H}_{g_{1}}$ is $N_C(H)$. The orbit-stabilizer theorem gives
\[|\mathcal{H}_{g_{1}}|=\frac{|C|}{|N_C(H)|}=2^{n-2}.\]
Lemma~\ref{lem:5.9 BA4} shows that each $H\in \mathcal{H}_{g_{1}}$ allows $\frac{2^n-4}{12}$ possible choices for $B$. We thus conclude
\[|\mathcal{B}_{A_4}^{g_1}|=|\mathcal{H}_{g_{1}}| \cdot \frac{2^n-4}{12} =2^{n-2}\cdot \frac{2^n-4}{12}=\frac{2^n(2^n-4)}{48}.\]
\end{proof}

\begin{lem}\label{lem:5.11 BA4g2}
Let $g_2 = \begin{psmallmatrix} \omega^2 & 0 \\ 0 & \omega \end{psmallmatrix}$, $\mathcal{B}_{A_4}^{g_2} = \{B\in \binom{X}{13} \mid G_B \simeq A_4, g_2(B)=B\}.$ Then 
\[|\mathcal{B}_{A_4}^{g_2}| = \begin{cases} 
\dfrac{(2^n-1)(2^n-4)}{18} & \text{if}\,\,n\,\,\text{is even},  \\
0 & \text{if}\,\,n\,\,\text{is odd}.
\end{cases}\]

\end{lem}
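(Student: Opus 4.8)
The plan is to count $|\mathcal{B}_{A_4}^{g_2}|$ by a double-counting argument over the set $\mathcal{H}_{g_2} := \{H \subset G \mid H \simeq A_4,\ g_2 \in H\}$ of $A_4$-subgroups containing the chosen element $g_2$ of order $3$, exactly parallel to the proofs of Lemmas~\ref{lem:5.9 BA4} and \ref{lem:5.10 BA4g1}. Since $n$ odd forces no elements of order $3$ (Table~\ref{table:2 元の位数と固定点数とn条件}), that case gives $0$ immediately, so assume $n$ even. First I would observe that if $B \in \mathcal{B}_{A_4}^{g_2}$ with $G_B = H$, then $g_2 \in G_B = H$, so $B$ is counted against some $H \in \mathcal{H}_{g_2}$; conversely each $H \in \mathcal{H}_{g_2}$ contributes $\frac{2^n-4}{12}$ choices of $B$ (the block $B \setminus \{\infty\}$ must be a full $H$-orbit of size $12$ among the $2^n - 4$ points outside the four fixed-points of the Sylow-$2$ subgroup, as established in Lemma~\ref{lem:5.9 BA4}). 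Hence $|\mathcal{B}_{A_4}^{g_2}| = |\mathcal{H}_{g_2}| \cdot \frac{2^n-4}{12}$, and everything reduces to computing $|\mathcal{H}_{g_2}|$.

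To compute $|\mathcal{H}_{g_2}|$ I would use the conjugation action of the centralizer $C := C_G(g_2)$ on $\mathcal{H}_{g_2}$. Since $g_2 = \begin{psmallmatrix} \omega^2 & 0 \\ 0 & \omega \end{psmallmatrix}$ is diagonal with distinct eigenvalues, its centralizer in $G$ consists of the diagonal matrices $\begin{psmallmatrix} c & 0 \\ 0 & c^{-1} \end{psmallmatrix}$, $c \in \GF(2^n)^\times$, so $|C| = 2^n - 1$ (consistent with Table~\ref{Table:1 置換指標}). By Lemma~\ref{lem:5.6 すべてのA4はGの中で共役} any two members of $\mathcal{H}$ are $G$-conjugate; I would need to upgrade this to: any two members of $\mathcal{H}_{g_2}$ are $C$-conjugate. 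The point is that given $H, H' \in \mathcal{H}_{g_2}$ there is $g$ with $gHg^{-1} = H'$; then both $g g_2 g^{-1}$ and $g_2$ lie in $H'$ and have order $3$, and the elements of order $3$ in an $A_4$ form a single conjugacy class of $H'$ (size $8$, split into $\langle\text{element}\rangle$ pairs) — so after adjusting $g$ by an element of $H'$ we may assume $g g_2 g^{-1} = g_2$, i.e.\ $g \in C$. Thus $C$ acts transitively on $\mathcal{H}_{g_2}$, and $|\mathcal{H}_{g_2}| = |C|/|N_C(H)| = (2^n-1)/|N_C(H)|$ for the fixed representative $H = \langle g_1, g_2 \rangle$ with $g_1 = \begin{psmallmatrix} 1 & 1 \\ 0 & 1 \end{psmallmatrix}$.

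The remaining step is to compute $N_C(H)$ for $H = \left\langle \begin{psmallmatrix} 1 & 1 \\ 0 & 1 \end{psmallmatrix}, \begin{psmallmatrix} \omega^2 & 0 \\ 0 & \omega \end{psmallmatrix} \right\rangle$. A diagonal element $\begin{psmallmatrix} c & 0 \\ 0 & c^{-1} \end{psmallmatrix}$ conjugates $\begin{psmallmatrix} 1 & 1 \\ 0 & 1 \end{psmallmatrix}$ to $\begin{psmallmatrix} 1 & c^2 \\ 0 & 1 \end{psmallmatrix}$; for this to lie in $H$ (whose unipotent upper-triangular part is $\left\{\begin{psmallmatrix} 1 & x \\ 0 & 1 \end{psmallmatrix} : x \in \{0,1,\omega,\omega^2\} = \GF(4)\right\}$, the $\mathbb{Z}_2^2$ of Lemma~\ref{lem:5.7 Hの正規化群}) we need $c^2 \in \GF(4)^\times$, i.e.\ $c \in \GF(4)^\times$, so $|N_C(H)| = 3$ and $N_C(H) = \{ \begin{psmallmatrix} c & 0 \\ 0 & c^{-1} \end{psmallmatrix} : c \in \{1,\omega,\omega^2\}\} = \langle g_2 \rangle$. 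Therefore $|\mathcal{H}_{g_2}| = (2^n-1)/3$ and $|\mathcal{B}_{A_4}^{g_2}| = \frac{2^n-1}{3} \cdot \frac{2^n-4}{12} = \frac{(2^n-1)(2^n-4)}{18}$, as claimed. I expect the main obstacle to be the transitivity upgrade from $\mathcal{H}$ to $\mathcal{H}_{g_2}$ under the smaller group $C$ — one must be careful that conjugating $g_2$ inside $H'$ by an element of $A_4$ can realize any generator of any of the four Sylow-$3$ subgroups, so that $g$ can indeed be corrected into $C$; this is where I would spend the most care, though it is still essentially bookkeeping inside $A_4$.
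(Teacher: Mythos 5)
Your overall strategy (reduce to counting $\mathcal{H}_{g_2}$ via the conjugation action of $C=C_G(g_2)$, then multiply by $\frac{2^n-4}{12}$ blocks per subgroup) is the same as the paper's, and your computations of $C$ and of $N_C(H)=\langle g_2\rangle$ are correct. But there is a genuine gap: the claim that $C$ acts transitively on $\mathcal{H}_{g_2}$ is false. Your justification rests on the assertion that the elements of order $3$ in $A_4$ form a single conjugacy class of size $8$; in fact they form \emph{two} classes of size $4$, interchanged by inversion. So from $gHg^{-1}=H'$ you can only adjust $g$ by an element of $H'$ to achieve $gg_2g^{-1}\in\{g_2,g_2^{-1}\}$, i.e.\ $g\in N_G(\langle g_2\rangle)$, which is strictly larger than $C$. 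Concretely, $g_2$ has the two fixed points $0$ and $\infty$, and by Lemma~\ref{lem:5.4 A4の固定点はBの中で1つ} each $H\in\mathcal{H}_{g_2}$ has a single common fixed point, which must be $0$ or $\infty$; a diagonal matrix fixes both, so $C$ cannot conjugate $H=\langle\begin{psmallmatrix}1&1\\0&1\end{psmallmatrix},g_2\rangle$ (fixed point $\infty$) to $\langle\begin{psmallmatrix}1&0\\1&1\end{psmallmatrix},g_2\rangle$ (fixed point $0$). Hence $\mathcal{H}_{g_2}$ splits into two $C$-orbits, each of size $\frac{2^n-1}{3}$, and $|\mathcal{H}_{g_2}|=\frac{2(2^n-1)}{3}$ --- exactly the decomposition $\mathcal{H}_{g_2}=\mathcal{H}_{g_2,\infty}\cup\mathcal{H}_{g_2,0}$ that the paper uses.

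You nevertheless land on the correct final formula only because of a compensating arithmetic slip: with your value $|\mathcal{H}_{g_2}|=\frac{2^n-1}{3}$ the product $\frac{2^n-1}{3}\cdot\frac{2^n-4}{12}$ equals $\frac{(2^n-1)(2^n-4)}{36}$, not $\frac{(2^n-1)(2^n-4)}{18}$. The missing factor of $2$ from the second $C$-orbit and the dropped factor of $2$ in the denominator cancel. (As a check, $\frac{(2^n-1)(2^n-4)}{36}$ would break the Burnside computation in Proposition~\ref{prop:5.12 m143}, which requires the contribution $\frac{2^n-4}{18}$ from the order-$3$ terms.) To repair the proof, replace the transitivity claim by the two-orbit decomposition above and sum the two contributions.
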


\begin{proof}
We write $\mathcal{B}_{A_4}^{g_2} = \{B \in \binom{X}{13} \mid G_B \simeq A_4, g_2(B)=B\}$ and let $\mathcal{H}_{g_{2}}=\{H\subset G\mid H\simeq A_4, g_2\in H\}$. By Table~\ref{table:2 元の位数と固定点数とn条件}, when $n$ is odd, there are no elements of order $3$ in $G$, and therefore $\mathcal{B}_{A_4}^{g_{2}} = 0$. For even $n$, we will determine the value of $\mathcal{B}_{A_4}^{g_{2}}$.

By Lemma~\ref{lem:5.6 すべてのA4はGの中で共役}, all subgroups isomorphic to $A_4$ are conjugate in $G$. We observe that $g_2$ has two fixed points $0$ and $\infty$. When $\infty$ is a fixed point by $H\in \mathcal{H}_{g_{2}}$, Lemma~\ref{lem:5.5 H'の中で共役} shows we can write $H=\left\langle \begin{psmallmatrix} 1 & x \\ 0 & 1 \end{psmallmatrix}, g_2 \right\rangle \text{for}\, x\in \GF(2^n)^\times$. Similarly, when $0$ is a fixed point by $H\in \mathcal{H}_{g_{2}}$, we can write $H=\left\langle \begin{psmallmatrix} 1 & 0 \\ x & 1 \end{psmallmatrix}, g_2 \right\rangle \text{for}\, x\in \GF(2^n)^\times$. Let $\mathcal{H}_{g_2,\infty}=\{H\subset G\mid H\simeq A_4, g_2\in H, \text{ fixed-point of }H \text{ is }\infty\}$ and $\mathcal{H}_{g_2,0}=\{H\subset G\mid H\simeq A_4, g_2\in H, \text{ fixed-point of }H \text{ is }0\}$. Then $\mathcal{H}_{g_{2}}=\mathcal{H}_{g_2,\infty}\cup \mathcal{H}_{g_2,0}$. Since $\mathcal{H}_{g_2,\infty}$ and $\mathcal{H}_{g_2,0}$ are conjugate in $G$, it suffices to analyze $\mathcal{H}_{g_2,\infty}$.

We first determine $C\coloneq C_G(g_2)$. By Lemma~\ref{lem:5.5 H'の中で共役}, since all subgroups in $\mathcal{H}_{g_2,\infty}$ are conjugate, we may assume without loss of generality that 
$H=\left\langle\begin{psmallmatrix} 1 & 1 \\ 0 & 1 \end{psmallmatrix},g_{2}\right\rangle$.\\
Let $h\in C$ have the form $h=\begin{psmallmatrix} h_1 & h_2 \\ h_3 & h_4 \end{psmallmatrix}$. The conjugation equation
$hg_2h^{-1}=\begin{psmallmatrix} \omega^2h_1h_4+\omega h_2h_3 & h_1h_2 \\ h_3h_4 & \omega h_{1}h_{4}+\omega^{2}h_{2}h_{3} \end{psmallmatrix}=g_2$
yields $h_1h_2=h_3h_4=0$. We consider two cases: (A) $h_1=h_4=0$ and (B) $h_2=h_3=0$. Case (A) leads to contradictory equations $h_2h_3=\omega$ and $h_2h_3=\omega^2$. For case (B), we obtain $h_1h_4=1$, thus
$C=\left\{\begin{psmallmatrix} h_1 & 0 \\ 0 & h_1^{-1} \end{psmallmatrix}\mid h_1\in \GF(2^n)^\times\right\}.$ 

Next, we determine $N_C(H)$. For $h=\begin{psmallmatrix} h_1 & 0 \\ 0 & h_1^{-1} \end{psmallmatrix}\in C$, conjugation gives
$h\begin{psmallmatrix} 1 & 1 \\ 0 & 1 \end{psmallmatrix}h^{-1}=\begin{psmallmatrix} 1 & h_1^2 \\ 0 & 1 \end{psmallmatrix}\in H.$
This implies $h_1^2\in \{1,\omega,\omega^2\}$, equivalent to $h_1\in \{1,\omega,\omega^2\}$. Therefore,
$N_C(H)=\{1, g_2, g_2^2\}\simeq \mathbb{Z}_3.$ Under conjugation by elements of $C$, the stabilizer of $H\in \mathcal{H}$ is $N_C(H)$. By the orbit-stabilizer theorem, we obtain
\[|\mathcal{H}_{g_2,\infty}|=\frac{|C|}{|N_C(H)|}=\frac{2^n-1}{3}.\]
A symmetric argument shows $|\mathcal{H}_{g_2,0}|=\frac{2^n-1}{3}$, where $\mathcal{H}_{g_2,\infty}$ and $\mathcal{H}_{g_2,0}$ correspond to the cases where $\infty$ and $0$ are fixed points in $B$, respectively. By Lemma~\ref{lem:5.9 BA4}, each $H\in \mathcal{H}$ allows $\frac{2^n-4}{12}$ possible choices for $B$. Therefore, we obtain
\[|\mathcal{B}_{A_4}^{g_2}|=2\cdot |\mathcal{H}_{g_2,\infty}|\cdot \frac{2^{n}-4}{12}=2\cdot \frac{2^n-1}{3}\cdot \frac{2^n-4}{12}=\frac{(2^n-1)(2^n-4)}{18}.\]
\end{proof}

\begin{prop}\label{prop:5.12 m143}
\[
m_{143} = \begin{cases}
\dfrac{2^{n}-4}{12} & \text{if}\,\,n \,\,\text{is even}, \\
0 & \text{if}\,\,n\,\,\text{is odd}.
\end{cases}
\]
\end{prop}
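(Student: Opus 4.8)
The plan is to feed the three counting lemmas just proved into the Cauchy--Frobenius--Burnside expression recorded in Remark~\ref{rem:5.8 バーンサイドの補題適用の仕方} and collapse the resulting rational expression. First I would dispatch the odd case: by Lemmas~\ref{lem:5.9 BA4}, \ref{lem:5.10 BA4g1}, and \ref{lem:5.11 BA4g2}, when $n$ is odd each of $|\mathcal{B}_{A_4}|$, $|\mathcal{B}_{A_4}^{g_1}|$, $|\mathcal{B}_{A_4}^{g_2}|$ equals $0$, so
\[m_{143} = \frac{1}{|G|}\left(|\mathcal{B}_{A_4}| + (4^n - 1)|\mathcal{B}_{A_4}^{g_1}| + 2^n(2^n + 1)|\mathcal{B}_{A_4}^{g_2}|\right) = 0.\]

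For even $n$, I would set $q = 2^n$ and use $|G| = q(q^2-1)$, $4^n-1 = q^2-1$, and $2^n(2^n+1) = q(q+1)$. Substituting the even-$n$ values
\[|\mathcal{B}_{A_4}| = \frac{q(q^2-1)(q-4)}{144}, \qquad |\mathcal{B}_{A_4}^{g_1}| = \frac{q(q-4)}{48}, \qquad |\mathcal{B}_{A_4}^{g_2}| = \frac{(q-1)(q-4)}{18},\]
the three terms inside the parentheses become $\frac{q(q^2-1)(q-4)}{144}$, $\frac{q(q^2-1)(q-4)}{48}$, and $\frac{q(q^2-1)(q-4)}{18}$ respectively (using $q(q+1)(q-1) = q(q^2-1)$ for the last one). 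Pulling out the common factor $q(q^2-1)$ and cancelling it against $|G|$ leaves
\[m_{143} = (q-4)\left(\frac{1}{144} + \frac{1}{48} + \frac{1}{18}\right) = (q-4)\cdot\frac{1+3+8}{144} = \frac{q-4}{12} = \frac{2^n-4}{12}.\]

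Since all of the preparatory work lives in Lemmas~\ref{lem:5.9 BA4}--\ref{lem:5.11 BA4g2}, there is no real obstacle at this stage; the computation is just bookkeeping, the one genuinely delicate inputs being the orbit--stabilizer counts of $A_4$-subgroups containing a fixed element of order $2$ or $3$, which are already handled. For reassurance I would note that the output is a nonnegative integer: for even $n \geq 2$ we have $2^n = 4\cdot 4^{(n-2)/2} \equiv 4 \pmod{12}$, hence $12 \mid 2^n - 4$, and the value is $0$ precisely when $n = 2$, consistent with $\binom{X}{13}$ being empty for $|X| = 5$.
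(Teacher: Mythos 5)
Your proposal is correct and follows exactly the paper's route: apply the Cauchy--Frobenius--Burnside expression from Remark~\ref{rem:5.8 バーンサイドの補題適用の仕方}, substitute the values from Lemmas~\ref{lem:5.9 BA4}, \ref{lem:5.10 BA4g1}, and \ref{lem:5.11 BA4g2}, and simplify. The paper leaves the arithmetic implicit; your explicit reduction to $(q-4)(\tfrac{1}{144}+\tfrac{1}{48}+\tfrac{1}{18})$ and the integrality check $12 \mid 2^n-4$ are accurate additions.
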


\begin{proof}
Note that $|O_{2}| = 4^{n}-1$, and for even $n$, $|O_{3}| = 2^{n}(2^{n}+1)$. By Table~\ref{table:2 元の位数と固定点数とn条件}, when $n$ is odd, there are no elements of order $3$ in $G$, and therefore $m_{143} = 0$.

For even $n$, by the Cauchy-Frobenius-Burnside lemma:
\[m_{143} = \dfrac{1}{|G|}\left(|\mathcal{B}_{A_4}| + (4^n-1)|\mathcal{B}_{A_4}^{g_1}| + 2^n(2^n+1)|\mathcal{B}_{A_4}^{g_2}|\right).\]
By Lemmas~\ref{lem:5.9 BA4}, \ref{lem:5.10 BA4g1}, and \ref{lem:5.11 BA4g2}, we have:
\[|\mathcal{B}_{A_4}| = \dfrac{2^n(4^n-1)(2^n-4)}{144}, \quad |\mathcal{B}_{A_4}^{g_1}| = \dfrac{2^n(2^n-4)}{48}, \quad |\mathcal{B}_{A_4}^{g_2}| = \dfrac{(2^n-1)(2^n-4)}{18}.\]
Substituting these values and $|G| = 2^n(4^n-1)$ into the formula yields the desired result.
\end{proof}

\subsection{Calculation of $m_{429}$}

\begin{remark}
When $G_B \simeq \mathbb{Z}_{2}^{2}$, the corresponding value $\lambda_B = 429$ represents the parameter of the $3$-$(2^n+1,13,429)$ design formed by the orbit $G(B)$. We aim to determine $m_{429}$, which represents the total number of orbits forming such designs.
\end{remark}
By Lemma~\ref{lem:5.4 A4の固定点はBの中で1つ}, following Lemma~\ref{lem:5.14 Z22はB内に同一の固定点を持つ} holds.

\begin{lem}\label{lem:5.14 Z22はB内に同一の固定点を持つ}
Let $\mathbb{Z}_2^2=\langle\sigma_1,\sigma_2\rangle\simeq G_B$. Then $\mathbb{Z}_2^2$ has a fixed point in $B$, and in particular, any fixed point of $\sigma_1$ is also a fixed point of $\sigma_2$.
\end{lem}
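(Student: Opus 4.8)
The plan is to run essentially the same argument as Lemma~\ref{lem:5.4 A4の固定点はBの中で1つ}, only shorter because $\mathbb{Z}_2^2$ is abelian. Write $\mathbb{Z}_2^2 = \{\mathrm{id},\sigma_1,\sigma_2,\sigma_1\sigma_2\} = G_B$; every non-identity element here has order $2$, and the three involutions commute pairwise.

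First I would recall from Table~\ref{table:2 元の位数と固定点数とn条件} (equivalently, Lemma~\ref{lem:2.6 13元集合と固定点} with $13 = 2\cdot 6 + 1$, so $r=1$) that each involution of $G$ has exactly one fixed point on $X$, and that an involution lying in $G_B$ must fix exactly one point of $B$, which is then forced to be its unique fixed point on $X$. (That a point of $B$ is fixed follows since $|B|=13$ is odd and $\sigma_i$ is an involution.) Let $x_i \in B$ denote the fixed point of $\sigma_i$ for $i = 1,2$.

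The one substantive step is to show $x_1 = x_2$. Since $\sigma_1$ and $\sigma_2$ commute, $\sigma_2(\sigma_1(x_2)) = \sigma_1(\sigma_2(x_2)) = \sigma_1(x_2)$, so $\sigma_1(x_2)$ is a fixed point of $\sigma_2$; by uniqueness, $\sigma_1(x_2) = x_2$, that is, $x_2$ is fixed by $\sigma_1$, and by uniqueness of $\sigma_1$'s fixed point we get $x_1 = x_2$. Set $p := x_1 = x_2 \in B$. Then $p$ is fixed by $\sigma_1$ and $\sigma_2$, hence by $\sigma_1\sigma_2$, so $p$ is a fixed point in $B$ of the whole group $\mathbb{Z}_2^2$; and since the unique fixed point of $\sigma_1$ equals $p = x_2$, it is in particular a fixed point of $\sigma_2$, which is the final assertion.

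I do not expect a genuine obstacle here. The only points needing care are that each $\sigma_i \in G_B$ really fixes a point of $B$ (forced by the parity of $|B| = 13$), and that this fixed point is unique on $X$ (the entry $\chi(g)=1$ of Table~\ref{Table:1 置換指標}); granting those, the conclusion is just the short commutativity manipulation above. This is why the statement is phrased as following from the method of Lemma~\ref{lem:5.4 A4の固定点はBの中で1つ}.
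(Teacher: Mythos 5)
Your proof is correct and runs on the same engine as the paper's: the paper proves Lemma 5.14 by pointing to the Klein-four argument inside the proof of Lemma 5.4, whose substance is exactly your observation that commuting involutions, each with a unique fixed point on $X$ that is forced into $B$ by the parity of $|B|=13$, must share that fixed point. Your direct commutativity manipulation is if anything a little cleaner than the paper's contradiction via $s's''s=\mathrm{id}$, but it is the same idea.
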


\begin{lem}\label{lem:5.15 無限を固定するZ22は1101が入る形に共役である}
Any subgroup $\mathbb{Z}_{2}^2 \subset G$ having $\infty$ as a fixed point  is conjugate in $G$ to $\langle\begin{psmallmatrix} 1 & 1 \\ 0 & 1 \end{psmallmatrix},\begin{psmallmatrix} 1 & z \\ 0 & 1 \end{psmallmatrix}\rangle$ for some $z\in \GF(2^n)\setminus \{0,1\}$.
\end{lem}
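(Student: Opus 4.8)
The plan is to reduce the statement to a normalization argument inside the stabilizer of $\infty$, mirroring the structure of the $A_4$ case in Lemma~\ref{lem:5.5 H'の中で共役}. First I would recall that any element of $G$ fixing $\infty$ has matrix form $\begin{psmallmatrix} a & b \\ 0 & a^{-1} \end{psmallmatrix}$ with $a\in\GF(2^n)^\times$, $b\in\GF(2^n)$, and in characteristic $2$ an involution fixing $\infty$ is forced (by Lemma~\ref{lem:3.3 位数2の元の必要十分条件}, or by a direct computation: $a=a^{-1}$ forces $a=1$) to be of the form $\begin{psmallmatrix} 1 & x \\ 0 & 1 \end{psmallmatrix}$ with $x\in\GF(2^n)\setminus\{0\}$. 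Hence a subgroup $\mathbb{Z}_2^2\subset G$ with $\infty$ as a common fixed point — which is the relevant situation by Lemma~\ref{lem:5.14 Z22はB内に同一の固定点を持つ} — consists of $\{\mathrm{id}\}$ together with three such unipotent matrices, and since the nonzero entries of two of them must sum to the third (the group being elementary abelian, so $\begin{psmallmatrix} 1 & x \\ 0 & 1 \end{psmallmatrix}\begin{psmallmatrix} 1 & y \\ 0 & 1 \end{psmallmatrix}=\begin{psmallmatrix} 1 & x+y \\ 0 & 1 \end{psmallmatrix}$), such a subgroup is exactly $\langle\begin{psmallmatrix} 1 & x \\ 0 & 1 \end{psmallmatrix},\begin{psmallmatrix} 1 & y \\ 0 & 1 \end{psmallmatrix}\rangle$ for two $\GF(2)$-linearly independent elements $x,y\in\GF(2^n)^\times$.

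Next I would exhibit an explicit conjugating element. Given such a subgroup with generators $\begin{psmallmatrix} 1 & x \\ 0 & 1 \end{psmallmatrix}$ and $\begin{psmallmatrix} 1 & y \\ 0 & 1 \end{psmallmatrix}$, set $c:=x^{2^{n-1}}$ so that $c^2=x$, and take $g:=\begin{psmallmatrix} c^{-1} & 0 \\ 0 & c \end{psmallmatrix}\in G$. A direct calculation gives $g\begin{psmallmatrix} 1 & x \\ 0 & 1 \end{psmallmatrix}g^{-1}=\begin{psmallmatrix} 1 & x c^{-2} \\ 0 & 1 \end{psmallmatrix}=\begin{psmallmatrix} 1 & 1 \\ 0 & 1 \end{psmallmatrix}$ and $g\begin{psmallmatrix} 1 & y \\ 0 & 1 \end{psmallmatrix}g^{-1}=\begin{psmallmatrix} 1 & y c^{-2} \\ 0 & 1 \end{psmallmatrix}=\begin{psmallmatrix} 1 & z \\ 0 & 1 \end{psmallmatrix}$ with $z:=y/x$. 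Since $x,y$ are $\GF(2)$-independent, $z\notin\{0,1\}$, so $z\in\GF(2^n)\setminus\{0,1\}$ as required. Note $n\geq 2$ here, since for $n=1$ there is no such $z$ and indeed no $\mathbb{Z}_2^2$ subgroup; this is consistent because $\mathbb{Z}_2^2$ requires $j=2\le n$ in Lemma~\ref{lem:2.5 dicson定理}.

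The only real subtlety — and thus the step I would present most carefully — is confirming that the hypothesis "$\mathbb{Z}_2^2\subset G$ having $\infty$ as a fixed point" does force both generators, hence the whole group, to fix $\infty$: this is immediate from the observation above that every nontrivial element of a subgroup fixing $\infty$ is a unipotent upper-triangular matrix. With that in place the argument is just the orbit computation above. I do not expect any genuine obstacle; the work is entirely the explicit conjugation, which is the same square-root-in-$\GF(2^n)$ trick already used in Lemma~\ref{lem:5.5 H'の中で共役}.
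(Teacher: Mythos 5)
Your proposal is correct and follows essentially the same route as the paper's proof: reduce to the fact that involutions fixing $\infty$ are unipotent upper-triangular, then conjugate by a diagonal element built from the square root $x^{2^{n-1}}$ of $x$ in $\GF(2^n)$ to normalize one generator to $\begin{psmallmatrix} 1 & 1 \\ 0 & 1 \end{psmallmatrix}$ and the other to $\begin{psmallmatrix} 1 & y/x \\ 0 & 1 \end{psmallmatrix}$. Your explicit justification that $z=y/x\notin\{0,1\}$ via $\GF(2)$-linear independence is in fact slightly cleaner than the paper's phrasing.
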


\begin{proof}
By Lemma~\ref{lem:5.5 H'の中で共役}, elements of order $2$ fixing $\infty$ must be of the form $\begin{psmallmatrix} 1 & x \\ 0 & 1 \end{psmallmatrix}$ where $x\in \GF(2^n)^{\times}$. By Lemma~\ref{lem:5.14 Z22はB内に同一の固定点を持つ}, elements of order 2 in $\mathbb{Z}_{2}^{2}$ share their fixed points. Thus any $\mathbb{Z}_{2}^{2}$ fixing $\infty$ can be written as $H \coloneqq \langle\begin{psmallmatrix} 1 & x \\ 0 & 1 \end{psmallmatrix},\begin{psmallmatrix} 1 & y \\ 0 & 1 \end{psmallmatrix} \rangle$ for $x,y\in \GF(2^n)\setminus \{0,1\}$.\\
Consider conjugation of $H$. For $g\in N_G(H)$, we may assume $g=\begin{psmallmatrix} g_1 & g_2 \\ 0 & g_1^{-1} \end{psmallmatrix}$. Then $g \begin{psmallmatrix} 1 & x \\ 0 & 1 \end{psmallmatrix} g^{-1}=\begin{psmallmatrix} 1 & g_1^2x \\ 0 & 1 \end{psmallmatrix}$. Taking $g_1=x^{-1/2}=x^{-2^{n-1}}$, we obtain $g \begin{psmallmatrix} 1 & x \\ 0 & 1 \end{psmallmatrix} g^{-1}=\begin{psmallmatrix} 1 & 1 \\ 0 & 1 \end{psmallmatrix}$.

Moreover, $g \begin{psmallmatrix} 1 & y \\ 0 & 1 \end{psmallmatrix} g^{-1}=\begin{psmallmatrix} 1 & x^{-1}y \\ 0 & 1 \end{psmallmatrix}$. Setting $z\coloneqq x^{-1}y$, we see that $H$ is conjugate to $\langle\begin{psmallmatrix} 1 & 1 \\ 0 & 1 \end{psmallmatrix},\begin{psmallmatrix} 1 & z \\ 0 & 1 \end{psmallmatrix}\rangle$ where $z\in \GF(2^n)\setminus \{0,1\}$.
\end{proof}

\begin{lem}\label{lem:5.16 Z22完全代表系}
The number of conjugacy classes of subgroups isomorphic to $\mathbb{Z}_{2}^2$ in $G$ fixing $\infty$ is given by:
\[ \begin{cases} 
\dfrac{2^{n}+2}{6} & \text{if}\,\,n\,\,\text{is even}, \\
\dfrac{2^{n}-2}{6} & \text{if}\,\,n\,\,\text{is odd}.
\end{cases} \]
\end{lem}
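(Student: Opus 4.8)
By Lemma~\ref{lem:5.15 無限を固定するZ22は1101が入る形に共役である}, every $\mathbb{Z}_2^2 \subset G$ fixing $\infty$ is $G$-conjugate to some $H_z := \langle\begin{psmallmatrix} 1 & 1 \\ 0 & 1 \end{psmallmatrix},\begin{psmallmatrix} 1 & z \\ 0 & 1 \end{psmallmatrix}\rangle$ with $z \in \GF(2^n)\setminus\{0,1\}$. So it suffices to count how many of these $H_z$ are mutually conjugate, i.e. to understand the equivalence relation ``$H_z$ is $G$-conjugate to $H_{z'}$'' on the parameter set $\GF(2^n)\setminus\{0,1\}$. First I would observe that, as a subgroup of the unipotent group $\{\begin{psmallmatrix} 1 & t \\ 0 & 1 \end{psmallmatrix}\} \cong (\GF(2^n),+)$, the group $H_z$ corresponds precisely to the $\GF(2)$-subspace $\langle 1, z\rangle_{\GF(2)} \subset \GF(2^n)$ of dimension $2$; thus $H_z = H_{z'}$ as subgroups iff $\langle 1,z\rangle = \langle 1,z'\rangle$, i.e. iff $z' \in \{z, 1+z\}$ (the element $z+1$ being the third nontrivial element of the $2$-dimensional space, since $1, z, 1+z$ are its nonzero vectors).

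**Reducing to an orbit count.** Next I would pin down when $H_z$ and $H_{z'}$ are conjugate in $G$. Any conjugating element may be taken (after adjusting by $H_z$) to fix $\infty$, hence of the form $g = \begin{psmallmatrix} g_1 & g_2 \\ 0 & g_1^{-1} \end{psmallmatrix}$; conjugation by such $g$ sends $\begin{psmallmatrix} 1 & t \\ 0 & 1 \end{psmallmatrix}$ to $\begin{psmallmatrix} 1 & g_1^2 t \\ 0 & 1 \end{psmallmatrix}$, so on the level of $2$-dimensional $\GF(2)$-subspaces of $\GF(2^n)$ it acts by scaling by $g_1^2$, where $g_1$ ranges over $\GF(2^n)^\times$ — and since squaring is a bijection of $\GF(2^n)^\times$, the scalar $g_1^2$ ranges over all of $\GF(2^n)^\times$. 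Therefore the conjugacy classes of $\mathbb{Z}_2^2 \subset G$ fixing $\infty$ are in bijection with the orbits of $\GF(2^n)^\times$ acting by multiplication on the set of $2$-dimensional $\GF(2)$-subspaces of $\GF(2^n)$, equivalently on $2$-dimensional subspaces containing a chosen nonzero vector after normalization — concretely, the map $H_z \mapsto$ its $\GF(2)$-span, modulo scaling, gives the bijection. So the count we want is
\[
\#\{\text{$2$-dim.\ }\GF(2)\text{-subspaces of }\GF(2^n)\} \Big/ \GF(2^n)^\times.
\]

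**The counting.** The number of $2$-dimensional $\GF(2)$-subspaces of the $n$-dimensional space $\GF(2^n)$ is the Gaussian binomial $\binom{n}{2}_2 = \frac{(2^n-1)(2^n-2)}{(2^2-1)(2^2-2)} = \frac{(2^n-1)(2^n-2)}{6}$. The group $\GF(2^n)^\times$ of order $2^n-1$ acts on this set; I would show the action is \emph{semiregular} (free), i.e.\ no nonzero scalar $c\neq 1$ stabilizes a $2$-dimensional subspace $V$: if $cV = V$ then multiplication by $c$ is a $\GF(2)$-linear automorphism of $V\cong \GF(2)^2$ of some order $e \mid |c| \mid 2^n-1$ with $e \mid |\GL_2(\GF(2))| = 6$; the only odd divisors available force $e \in \{1,3\}$, and $e=3$ would require $3 \mid 2^n-1$, i.e.\ $n$ even, \emph{and} that $c$ generate a copy of $\GF(4)^\times$ inside $\GF(2^n)^\times$ acting on $V$ — which makes $V$ a $1$-dimensional $\GF(4)$-subspace; one then checks whether such $V$ actually arises. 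This is where the parity of $n$ enters and is the main obstacle: when $n$ is even, $\GF(4)\subseteq\GF(2^n)$, so there \emph{are} $2$-dimensional $\GF(2)$-subspaces fixed by the order-$3$ scalars (namely the $\frac{2^n-1}{3}$ ``$\GF(4)$-lines''), giving short orbits of size $\frac{2^n-1}{3}$; when $n$ is odd, $3\nmid 2^n-1$ and the action is genuinely free. Counting orbits by a small Burnside/partition argument then yields, for $n$ even,
\[
\frac{1}{2^n-1}\left[\tfrac{(2^n-1)(2^n-2)}{6} - \tfrac{2^n-1}{3}\right]\cdot\tfrac{1}{?} + (\text{short orbits}),
\]
which after bookkeeping collapses to $\frac{2^n+2}{6}$, while for $n$ odd the free action gives exactly $\frac{1}{2^n-1}\cdot\frac{(2^n-1)(2^n-2)}{6} = \frac{2^n-2}{6}$. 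I would present the even case via: total subspaces $= \frac{(2^n-1)(2^n-2)}{6}$, of which $\frac{2^n-1}{3}$ lie in orbits of size $\frac{2^n-1}{3}$ (one such orbit) and the rest in free orbits of size $2^n-1$, giving $1 + \frac{1}{2^n-1}\left(\frac{(2^n-1)(2^n-2)}{6}-\frac{2^n-1}{3}\right) = 1 + \frac{2^n-4}{6} = \frac{2^n+2}{6}$ — the arithmetic is routine once the structure of short orbits is correctly identified, so the genuine content is the classification of fixed subspaces of scalars, which I expect to be the only delicate point.
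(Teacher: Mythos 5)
Your proposal is correct and reaches the right counts, but by a genuinely different route than the paper. The paper first normalizes: it passes to the subfamily $\mathcal{H}_\infty'$ of groups containing $\begin{psmallmatrix}1&1\\0&1\end{psmallmatrix}$, parametrizes these by $a\in\GF(2^n)\setminus\{0,1\}$ modulo $a\sim a+1$, and then observes that the residual conjugation freedom groups the parameters into triples $\{a,a^{-1},(a+1)^{-1}\}$ (collapsing to a singleton exactly when $a\in\{\omega,\omega^2\}$), whence $\tfrac13\cdot\tfrac{2^n-4}{2}+1$ for $n$ even and $\tfrac13\cdot\tfrac{2^n-2}{2}$ for $n$ odd. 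You instead skip the normalization, identify the subgroups fixing $\infty$ with the $2$-dimensional $\GF(2)$-subspaces of $\GF(2^n)$ (there are $\tfrac{(2^n-1)(2^n-2)}{6}$ of them), note that conjugation acts as scalar multiplication by $\GF(2^n)^\times$, and run an orbit--stabilizer count: the only possible nontrivial stabilizers are generated by $\omega$, the stabilized subspaces are precisely the $\GF(4)$-lines, and these form a single short orbit. This makes the exceptional role of $\{\omega,\omega^2\}$ conceptually transparent (it is just the $\GF(4)$-structure on $\GF(2^n)$) at the cost of a short argument with $|\GL_2(\GF(2))|=6$; the paper's version is more computational but needs no stabilizer analysis. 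Two small repairs to your write-up: the claim that the conjugating element can be taken to fix $\infty$ is true, but not ``after adjusting by $H_z$'' (elements of $H_z$ already fix $\infty$, so composing with them changes nothing); the right reason is that $\infty$ is the \emph{unique} common fixed point of each such $\mathbb{Z}_2^2$, so any $g$ with $gHg^{-1}=H'$ must send $\infty$ to $\infty$. Also, the displayed formula containing the placeholder ``$\tfrac{1}{?}$'' should be deleted in favor of the clean version you give immediately afterwards, which is the correct computation.
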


\begin{proof}
Let $\mathcal{H}_{\infty}=\{H\subset G \mid H\simeq \mathbb{Z}_{2}^{2}, H \text{ stabilizes } \infty \}$ and $\mathcal{H}_{\infty}'=\{H\in \mathcal{H}_{\infty} \mid \begin{psmallmatrix} 1 & 1 \\ 0 & 1 \end{psmallmatrix}\in H \}$. Since any subgroup of $G$ isomorphic to $\mathbb{Z}_{2}^{2}$ fixing $\infty$ is conjugate to $H=\langle\begin{psmallmatrix} 1 & 1 \\ 0 & 1 \end{psmallmatrix},\begin{psmallmatrix} 1 & a \\ 0 & 1 \end{psmallmatrix}\rangle=\{\begin{psmallmatrix} 1 & x \\ 0 & 1 \end{psmallmatrix}\mid x\in \{0,1,a,a+1\}\}$ for some $a\in \GF(2^n)\setminus\{0,1\}$ by Lemma~\ref{lem:5.15 無限を固定するZ22は1101が入る形に共役である}, the number of conjugacy classes of $\mathcal{H}_{\infty}$ is equal to that of $\mathcal{H}_{\infty}'$.
Therefore, it suﬃces to determine the number of distinct conjugacy classes of the set $\mathcal{H}_{\infty}'$ of groups.

Consider conjugation $g^{-1}H g=\langle\begin{psmallmatrix} 1 & 1 \\ 0 & 1 \end{psmallmatrix},\begin{psmallmatrix} 1 & a' \\ 0 & 1 \end{psmallmatrix}\rangle$ where $a'\in \GF(2^n)\setminus\{0,1\}$. Since both subgroups consist of upper triangular matrices, we can assume $g=\begin{psmallmatrix} g_1 & g_2 \\ 0 & g_1^{-1} \end{psmallmatrix}$. Under conjugation, we have $g^{-1}\begin{psmallmatrix} 1 & x \\ 0 & 1 \end{psmallmatrix}g=\begin{psmallmatrix} 1 & g_1^{-2}x \\ 0 & 1 \end{psmallmatrix}$ for $x\in \{0,1,a,a+1\}$. Thus $g^{-1}Hg=\left\{\begin{psmallmatrix} 1 & 0 \\ 0 & 1 \end{psmallmatrix},\begin{psmallmatrix} 1 & g_1^{-2} \\ 0 & 1 \end{psmallmatrix},\begin{psmallmatrix} 1 & g_1^{-2}a \\ 0 & 1 \end{psmallmatrix},\begin{psmallmatrix} 1 & g_1^{-2}(a+1) \\ 0 & 1 \end{psmallmatrix}\right\}$. Since this subgroup must contain $\begin{psmallmatrix} 1 & 1 \\ 0 & 1 \end{psmallmatrix}$, we have $g_1^{-2}\in \{1,a^{-1},(a+1)^{-1}\}$.

This leads to a conjugacy class of $\mathcal{H}_{\infty}'$ represented by $\{H_{1,a}, H_{2,a}, H_{3,a}\}$ where
\begin{align*}
H_{1,a}&=\left\{\begin{psmallmatrix} 1 & 0 \\ 0 & 1 \end{psmallmatrix},\begin{psmallmatrix} 1 & 1 \\ 0 & 1 \end{psmallmatrix},\begin{psmallmatrix} 1 & a \\ 0 & 1 \end{psmallmatrix},\begin{psmallmatrix} 1 & a+1 \\ 0 & 1 \end{psmallmatrix}\right\}, \\
H_{2,a}&=\left\{\begin{psmallmatrix} 1 & 0 \\ 0 & 1 \end{psmallmatrix},\begin{psmallmatrix} 1 & 1 \\ 0 & 1 \end{psmallmatrix},\begin{psmallmatrix} 1 & a^{-1} \\ 0 & 1 \end{psmallmatrix},\begin{psmallmatrix} 1 & a^{-1}+1 \\ 0 & 1 \end{psmallmatrix}\right\}, \\
H_{3,a}&=\left\{\begin{psmallmatrix} 1 & 0 \\ 0 & 1 \end{psmallmatrix},\begin{psmallmatrix} 1 & 1 \\ 0 & 1 \end{psmallmatrix},\begin{psmallmatrix} 1 & (a+1)^{-1} \\ 0 & 1 \end{psmallmatrix},\begin{psmallmatrix} 1 & (a+1)^{-1}+1 \\ 0 & 1 \end{psmallmatrix}\right\}.
\end{align*}
The size of this conjugacy class is $1$ if and only if $a\in \{\omega,\omega^2\}$. Any subgroup of $G$ isomorphic to $\mathbb{Z}_{2}^2$ fixing $\infty$ is conjugate to either $\langle\begin{psmallmatrix} 1 & 1 \\ 0 & 1 \end{psmallmatrix}, \begin{psmallmatrix} 1 & \omega \\ 0 & 1 \end{psmallmatrix}\rangle$ or $H_{1,a}$ where $a\in \GF(2^n)\setminus\{0,1,\omega,\omega^2\}$. 

For even $n$, observe that we can choose $a$ from $\GF(2^n)\setminus\{0,1,\omega,\omega^2\}$, giving $2^n-4$ possibilities. Since replacing $a$ by $a+1$ yields the same group, we obtain $\frac{2^{n}-4}{2}$ distinct groups, three of them make a conjugacy class except when $a\in \{\omega,\omega^2\}$. Therefore, the total number of conjugacy classes is $\frac{1}{3}\cdot \frac{2^n-4}{2}+1=\frac{2^{n}+2}{6}$. For odd $n$, since no primitive cube roots exist in $\GF(2^n)$, we only exclude $a\in \{0,1\}$, resulting in $\frac{1}{3}\cdot \frac{2^n-2}{2}=\frac{2^{n}-2}{6}$ conjugacy classes.
\end{proof}

\begin{lem}\label{lem:5.17 BZ22}
Let $\mathcal{B}_{\mathbb{Z}_{2}^{2}}=\{B\in \binom{X}{13} \mid G_B \simeq \mathbb{Z}_{2}^{2}\}$. Then
\[|\mathcal{B}_{\mathbb{Z}_{2}^{2}}| = \begin{cases} 
\dfrac{2^{2n}(4^n-1)(2^n-4)(2^n-10)}{2304} & \text{if}\,\,n\,\,\text{is even},  \\
\dfrac{2^n(4^n-1)(2^n-2)(2^n-4)(2^n-8)}{2304} & \text{if}\,\,n\,\,\text{is odd}.
\end{cases}\]
\end{lem}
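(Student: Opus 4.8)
The plan is to count the pairs $(H,B)$ with $H\subset G$, $H\simeq\mathbb{Z}_2^2$, and $H\subset G_B$ for $B\in\binom{X}{13}$, in two ways, and then solve for $|\mathcal{B}_{\mathbb{Z}_2^2}|$ using the value of $|\mathcal{B}_{A_4}|$ from Lemma~\ref{lem:5.9 BA4}. First I would isolate the local structure. The argument of Lemma~\ref{lem:5.14 Z22はB内に同一の固定点を持つ} uses only that two commuting involutions of $G$, each having a single fixed point on $X$ (Table~\ref{Table:1 置換指標}), must share that fixed point; hence every subgroup $H\simeq\mathbb{Z}_2^2$ of $G$ fixes a unique point $p=p_H\in X$ and acts freely on $X\setminus\{p\}$, which therefore splits into $2^{n-2}$ orbits of length $4$. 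Any $B\in\binom{X}{13}$ with $H\subset G_B$ is a union of $H$-orbits on $X$; since $4\nmid 13$ it must contain $p$, and then $B\setminus\{p\}$ is a union of exactly three of the length-$4$ orbits. So for each such $H$ there are precisely $\binom{2^{n-2}}{3}$ sets $B$ with $H\subset G_B$, independently of the parity of $n$.

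Next I would count $h$, the number of subgroups of $G$ isomorphic to $\mathbb{Z}_2^2$. Since each has a unique fixed point and $G$ is transitive on $X$, we have $h=(2^n+1)\,h_\infty$, where $h_\infty$ is the number of such subgroups fixing $\infty$. By Lemma~\ref{lem:3.3 位数2の元の必要十分条件} the involutions of $G$ fixing $\infty$ are exactly the maps $x\mapsto x+b$ with $b\in\GF(2^n)^{\times}$; together with the identity they form the elementary abelian group $U\cong(\GF(2^n),+)\cong\mathbb{F}_2^{\,n}$, and a $\mathbb{Z}_2^2$ fixing $\infty$ is exactly a $2$-dimensional $\mathbb{F}_2$-subspace of $U$. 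Counting such subspaces (the relevant Gaussian binomial coefficient) gives $h_\infty=\frac{(2^n-1)(2^{n-1}-1)}{3}$, hence $h=\frac{(4^n-1)(2^{n-1}-1)}{3}$.

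Finally I would run the double count. Summing over $H$ gives $h\binom{2^{n-2}}{3}$ pairs. Summing over $B\in\binom{X}{13}$ instead: by Table~\ref{table:4 GB候補2} the only stabilisers $G_B$ that contain a copy of $\mathbb{Z}_2^2$ are $G_B\simeq\mathbb{Z}_2^2$, which contributes one subgroup (namely $G_B$ itself), and $G_B\simeq A_4$, which contributes exactly one (its unique, normal, Sylow $2$-subgroup); so this count also equals $|\mathcal{B}_{\mathbb{Z}_2^2}|+|\mathcal{B}_{A_4}|$. Therefore $|\mathcal{B}_{\mathbb{Z}_2^2}|=h\binom{2^{n-2}}{3}-|\mathcal{B}_{A_4}|$, and substituting the value of $h$ and Lemma~\ref{lem:5.9 BA4} (with $|\mathcal{B}_{A_4}|=0$ for odd $n$), then clearing the common denominator $2304$, yields the two stated cases; for even $n$ the reduction hinges on the identity $(2^n-2)(2^n-8)-16=2^n(2^n-10)$ after factoring out $\frac{2^n(4^n-1)(2^n-4)}{2304}$.

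The main obstacle is getting the multiplicities in the double count exactly right: one must confirm that an $H$-invariant $13$-set is \emph{forced} to contain the common fixed point of $H$ (so that the count per $H$ is uniformly $\binom{2^{n-2}}{3}$ and does not depend on $n$ modulo anything), and that a block with $G_B\simeq A_4$ is over-counted precisely once. Both points follow from the structural facts above together with the classification of admissible $G_B$ in Table~\ref{table:4 GB候補2}; what then remains is only the routine algebraic simplification.
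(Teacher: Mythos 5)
Your proposal is correct, and it reaches the stated formulas by a genuinely different route than the paper. The paper works locally: it fixes $\infty$, classifies the conjugacy classes of $\mathbb{Z}_2^2$ subgroups fixing $\infty$ (Lemma~\ref{lem:5.16 Z22完全代表系}), computes $N_G(H)$ and the orbit size separately in the two cases $a\notin\{\omega,\omega^2\}$ and $a\in\{\omega,\omega^2\}$, and subtracts the $A_4$-correction $|\mathcal{B}_{A_4}^{g_1}|$ only for the exceptional subgroup $\langle\begin{psmallmatrix}1&1\\0&1\end{psmallmatrix},\begin{psmallmatrix}1&\omega\\0&1\end{psmallmatrix}\rangle$. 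You instead run a single global double count of pairs $(H,B)$ with $H\simeq\mathbb{Z}_2^2$ and $H\subseteq G_B$: the per-$H$ count $\binom{2^{n-2}}{3}=\frac{2^n(2^n-4)(2^n-8)}{384}$ agrees with the paper's $\frac{2^n}{4}\cdot\frac{2^n-4}{4}\cdot\frac{2^n-8}{4}\cdot\frac{1}{3!}$; your subgroup count $h=(2^n+1)\cdot\frac{(2^n-1)(2^{n-1}-1)}{3}=\frac{(4^n-1)(2^n-2)}{6}$ (via $2$-dimensional subspaces of the translation group) agrees with what the paper's orbit-stabilizer computations yield in aggregate; and your single subtraction of $|\mathcal{B}_{A_4}|=\frac{2^n(4^n-1)(2^n-4)}{144}$ (each $A_4$ having a unique Klein four-subgroup) equals the paper's $\frac{4^n-1}{3}\cdot|\mathcal{B}_{A_4}^{g_1}|$. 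The payoff of your version is that it bypasses the normalizer computations and the $\omega$-dependent case split entirely, treats even and odd $n$ uniformly up to the final inclusion--exclusion step, and makes transparent exactly where the $A_4$ correction enters; the two points you flag as potential obstacles (the forced inclusion of the common fixed point, and the multiplicity one for $A_4$-blocks) are indeed the crux, and both are justified as you state, using Table~\ref{table:4 GB候補2} to restrict $G_B\supseteq H$ to $\{\mathbb{Z}_2^2,A_4\}$. The paper's longer route does have the side benefit that the conjugacy-class analysis of Lemma~\ref{lem:5.16 Z22完全代表系} is reused in Lemma~\ref{lem:5.18 BZ22g1}, whereas your method would need a separate (though analogous) restricted count there.
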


\begin{proof}
Let $\mathcal{B}_{\mathbb{Z}_{2}^{2}}=\{B\in \binom{X}{13} \mid G_B \simeq \mathbb{Z}_{2}^{2}\}$.
First, we consider the case where $n$ is even. By Lemma~\ref{lem:5.16 Z22完全代表系}, we have two types of cases to consider: the complete representatives $\{H_{1,a}, H_{2,a}, H_{3,a}\}$ and the case $\left\langle\begin{psmallmatrix} 1 & 1 \\ 0 & 1 \end{psmallmatrix},\begin{psmallmatrix} 1 & \omega \\ 0 & 1 \end{psmallmatrix}\right\rangle$.

(A): We first consider the case $a \notin \{0,1,\omega,\omega^2\}$ with $H \coloneq H_{1,a} = \left\langle\begin{psmallmatrix} 1 & 1 \\ 0 & 1 \end{psmallmatrix},\begin{psmallmatrix} 1 & a \\ 0 & 1 \end{psmallmatrix}\right\rangle$. It is clear that these $H_{1,a}, H_{2,a}, H_{3,a}$ fix $\infty$ in $B$. By Lemma~\ref{lem:5.5 H'の中で共役}, all subgroups isomorphic to $A_4$ in $G$ that fix $\infty$ in $B$ have the form
$\left\langle\begin{psmallmatrix} \alpha & y \\ 0 & \alpha^2 \end{psmallmatrix}, \begin{psmallmatrix} 1 & x \\ 0 & 1 \end{psmallmatrix}\right\rangle$ where $\alpha \in \{\omega,\omega^2\}$, $x \in \GF(2^n)^\times$, $y \in \GF(2^n)$. In particular, if we let $H'$ be a subgroup isomorphic to $A_4$ in $G$ which contains $\begin{psmallmatrix} 1 & 1 \\ 0 & 1 \end{psmallmatrix}$ and fixes $\infty$ in $B$, then $H'=\left\langle\begin{psmallmatrix} 1 & 1 \\ 0 & 1 \end{psmallmatrix},\begin{psmallmatrix} \alpha & y \\ 0 & \alpha^2 \end{psmallmatrix}\right\rangle=$
\[\left\{\begin{psmallmatrix} 1 & 0 \\ 0 & 1 \end{psmallmatrix}, 
\begin{psmallmatrix} 1 & 1 \\ 0 & 1 \end{psmallmatrix}, 
\begin{psmallmatrix} 1 & \omega \\ 0 & 1 \end{psmallmatrix}, 
\begin{psmallmatrix} 1 & \omega^2 \\ 0 & 1 \end{psmallmatrix}, 
\begin{psmallmatrix} \omega^2 & y \\ 0 & \omega \end{psmallmatrix}, 
\begin{psmallmatrix} \omega^2 & y+1 \\ 0 & \omega \end{psmallmatrix}, 
\begin{psmallmatrix} \omega^2 & y+\omega \\ 0 & \omega \end{psmallmatrix}, 
\begin{psmallmatrix} \omega^2 & y+\omega^2 \\ 0 & \omega \end{psmallmatrix}, 
\begin{psmallmatrix} \omega & y \\ 0 & \omega^2 \end{psmallmatrix}, 
\begin{psmallmatrix} \omega & y+1 \\ 0 & \omega^2 \end{psmallmatrix}, 
\begin{psmallmatrix} \omega & y+\omega \\ 0 & \omega^2 \end{psmallmatrix}, 
\begin{psmallmatrix} \omega & y+\omega^2 \\ 0 & \omega^2 \end{psmallmatrix}\right\}.\]
Therefore $H_{1,a}, H_{2,a}, H_{3,a}$ cannot be subgroups of $H'$ since $a\notin \{w,w^{2}\}$. Since $H$ fixes $\infty$ in $B$, any $g \in N_G(H)$ must also fix $\infty$, and therefore $g$ has the form $g=\begin{psmallmatrix} g_1 & g_2 \\ 0 & g_1^{-1} \end{psmallmatrix}$ where $g_1 \in \GF(2^n)^\times$ and $g_2 \in \GF(2^n)$. Then we have
$g\begin{psmallmatrix} 1 & 1 \\ 0 & 1 \end{psmallmatrix}g^{-1}=\begin{psmallmatrix} 1 & g_1^2 \\ 0 & 1 \end{psmallmatrix}$ and
$g\begin{psmallmatrix} 1 & a \\ 0 & 1 \end{psmallmatrix}g^{-1}=\begin{psmallmatrix} 1 & g_1^2a \\ 0 & 1 \end{psmallmatrix}$. Therefore, 
$\left\langle\begin{psmallmatrix} 1 & g_1^2 \\ 0 & 1 \end{psmallmatrix},\begin{psmallmatrix} 1 & g_1^2a \\ 0 & 1 \end{psmallmatrix}\right\rangle = \left\langle\begin{psmallmatrix} 1 & 1 \\ 0 & 1 \end{psmallmatrix},\begin{psmallmatrix} 1 & a \\ 0 & 1 \end{psmallmatrix}\right\rangle
= \left\{\begin{psmallmatrix} 1 & 0 \\ 0 & 1 \end{psmallmatrix},\begin{psmallmatrix} 1 & 1 \\ 0 & 1 \end{psmallmatrix},\begin{psmallmatrix} 1 & a \\ 0 & 1 \end{psmallmatrix},\begin{psmallmatrix} 1 & a+1 \\ 0 & 1 \end{psmallmatrix}\right\}$.
For $a \notin \{\omega,\omega^2\}$, this implies $g_1 = 1$. Therefore
$N_G(H) = \left\{\begin{psmallmatrix} 1 & g_2 \\ 0 & 1 \end{psmallmatrix} \mid g_2 \in \GF(2^n)\right\}$.
From the form of $N_G(H)$, we obtain $|N_G(H)| = 2^n$. Since the stabilizer of $H$ under conjugation by $G$ is equal to $N_G(H)$, by the orbit-stabilizer theorem:
\[|\Orbit(H)| = \frac{|G|}{|N_G(H)|} = \frac{2^n(4^n-1)}{2^n} = 4^n-1.\]
Let $\mathcal{B}'_{\mathbb{Z}_{2}^{2}}=\{B\in \binom{X}{13} \mid G_B \simeq \mathbb{Z}_{2}^{2}, a\notin \{0,1,\omega,\omega^2\}\}$. By Lemma~\ref{lem:5.16 Z22完全代表系}, we have $\frac{1}{3}\cdot \frac{2^n-4}{2}$ possibilities with $|\Orbit(H)|=4^n-1$ choices for each $H$. Since we have shown $\mathcal{B}_{A_{4}}^{g_{1}}\not \subset \mathcal{B}_{\mathbb{Z}_{2}^{2}}$, we now count the possible choices for $B$ for each fixed $H$. Any set $B$ with $G_B \simeq \mathbb{Z}_2^2$ must have the form
\[B = \{x,x+1,x+a,x+a+1, y,y+1,y+a,y+a+1,z,z+1,z+a,z+a+1,\infty\},\]
where $x,y,z \in \GF(2^n)$ and all elements in $B$ are distinct. When constructing such a set, we have $2^n$ possibilities for $x$, but since choosing any element from $\{x,x+1,x+a,x+a+1\}$ generates the same subset, we effectively have $\frac{2^n}{4}$ choices. Similarly, we have $\frac{2^n-4}{4}$ choices for the second block started with $y$, and $\frac{2^n-8}{4}$ choices for the third block started with $z$. Furthermore, since the ordering of these three blocks does not affect the resulting set $B$, we must divide by $3!$. Therefore, for each fixed $H$, the number of possible choices for $B$ is $\frac{2^n}{4} \cdot \frac{2^n-4}{4} \cdot \frac{2^n-8}{4} \cdot \frac{1}{3!}$. Hence:
\[|\mathcal{B}'_{\mathbb{Z}_{2}^{2}}| = \frac{1}{3}\cdot \frac{2^n-4}{2}\cdot (4^n-1) \cdot \frac{2^n}{4} \cdot \frac{2^n-4}{4} \cdot \frac{2^n-8}{4} \cdot \frac{1}{3!}.\]

(B): We next consider the case $a \in \{\omega,\omega^2\}$. Let $H = \left\langle\begin{psmallmatrix} 1 & 1 \\ 0 & 1 \end{psmallmatrix},\begin{psmallmatrix} 1 & \omega \\ 0 & 1 \end{psmallmatrix}\right\rangle = \left\{\begin{psmallmatrix} 1 & 0 \\ 0 & 1 \end{psmallmatrix}, \begin{psmallmatrix} 1 & 1 \\ 0 & 1 \end{psmallmatrix}, \begin{psmallmatrix} 1 & \omega \\ 0 & 1 \end{psmallmatrix}, \begin{psmallmatrix} 1 & \omega^2 \\ 0 & 1 \end{psmallmatrix}\right\}$. All subgroups isomorphic to $A_4$ in $G$ that fix $\infty$ in $B$ have the form $\left\langle\begin{psmallmatrix} \alpha & y \\ 0 & \alpha^2 \end{psmallmatrix}, \begin{psmallmatrix} 1 & x \\ 0 & 1 \end{psmallmatrix}\right\rangle$ where $\alpha \in \{\omega,\omega^2\}$, $x \in \GF(2^n)^\times$, $y \in \GF(2^n)$. Therefore, if $H'$ is a subgroup isomorphic to $A_4$ which contains $\begin{psmallmatrix} 1 & 1 \\ 0 & 1 \end{psmallmatrix}$ and fixes $\infty$ in $B$, then $H'=\left\langle\begin{psmallmatrix} 1 & 1 \\ 0 & 1 \end{psmallmatrix},\begin{psmallmatrix} \alpha & y \\ 0 & \alpha^2 \end{psmallmatrix}\right\rangle$. Let $h\in N_G(H)$ be of the form $h=\begin{psmallmatrix} h_1 & h_2 \\ 0 & h_1^{-1} \end{psmallmatrix}$. Then
$\left\langle\begin{psmallmatrix} 1 & h_1^2 \\ 0 & 1 \end{psmallmatrix},\begin{psmallmatrix} 1 & h_1^2\omega \\ 0 & 1 \end{psmallmatrix}\right\rangle = \left\langle\begin{psmallmatrix} 1 & 1 \\ 0 & 1 \end{psmallmatrix},\begin{psmallmatrix} 1 & \omega \\ 0 & 1 \end{psmallmatrix}\right\rangle
= \left\{\begin{psmallmatrix} 1 & 0 \\ 0 & 1 \end{psmallmatrix},\begin{psmallmatrix} 1 & 1 \\ 0 & 1 \end{psmallmatrix},\begin{psmallmatrix} 1 & \omega \\ 0 & 1 \end{psmallmatrix},\begin{psmallmatrix} 1 & \omega^2 \\ 0 & 1 \end{psmallmatrix}\right\}.$ This implies $h_1^2\in \{1,\omega,\omega^2\}$, which gives $h_1\in \{1,\omega,\omega^2\}$. Therefore
$N_G(H) = \left\{\begin{psmallmatrix} 1 & b \\ 0 & 1 \end{psmallmatrix},\begin{psmallmatrix} \omega & b \\ 0 & \omega^2 \end{psmallmatrix},\begin{psmallmatrix} \omega^2 & b \\ 0 & \omega \end{psmallmatrix} \mid b \in \GF(2^n)\right\}$. Thus $|N_G(H)| = 3\cdot 2^n$, and by the orbit-stabilizer theorem:
\[|\Orbit(H)| = \frac{|G|}{|N_G(H)|} = \frac{2^n(4^n-1)}{3\cdot 2^n} = \frac{4^n-1}{3}.\]

Let $\mathcal{B}''_{\mathbb{Z}_{2}^{2}}=\{B\in \binom{X}{13} \mid G_B \simeq \mathbb{Z}_{2}^{2}, a\in \{\omega,\omega^2\}\}$. For each $H$, the number of possible choices for $B$ is $\frac{2^n(2^n-4)(2^n-8)}{4^3 \cdot 3!} - |\mathcal{B}_{A_4}^{g_1}|$, where $|\mathcal{B}_{A_4}^{g_1}| = \#\{B\in \binom{X}{13} \mid G_B \simeq A_{4}, g_1(B)=B\}$ where $g_1=\begin{psmallmatrix} 1 & 1 \\ 0 & 1 \end{psmallmatrix}$. By Lemma~\ref{lem:5.10 BA4g1}, we have $|\mathcal{B}_{A_4}^{g_1}| = \frac{2^n(2^n-4)}{48}$. Therefore:
\[|\mathcal{B}''_{\mathbb{Z}_{2}^{2}}| = |\Orbit(H)|\cdot \left(\frac{2^n(2^n-4)(2^n-8)}{4^3 \cdot 3!}-\frac{2^n(2^n-4)}{48}\right) = \frac{4^n-1}{3}\left(\frac{2^n(2^n-4)(2^n-8)}{4^3 \cdot 3!}-\frac{2^n(2^n-4)}{48}\right).\]
Hence, for even $n$:
\[|\mathcal{B}_{\mathbb{Z}_{2}^{2}}| = |\mathcal{B}'_{\mathbb{Z}_{2}^{2}}| + |\mathcal{B}''_{\mathbb{Z}_{2}^{2}}| = \frac{2^n(4^n-1)(2^n-2)(2^n-4)(2^n-8)}{2304}.\]

For odd $n$, since there are no primitive cube roots in $\GF(2^n)$, we can apply the same argument as case (A) above. Note that the number of subgroups isomorphic to $\mathbb{Z}_{2}^{2}$ is $\frac{1}{3}\cdot \frac{2^{n}-2}{2}$. Therefore, $|\mathcal{B}_{\mathbb{Z}_{2}^{2}}|=\frac{2^n-2}{2} \cdot(4^{n}-1)\cdot  \frac{2^n(2^n-4)(2^n-8)}{4^3 \cdot 3!} = \frac{2^n(4^n-1)(2^n-2)(2^n-4)(2^n-8)}{2304}$.
\end{proof}

\begin{lem}\label{lem:5.18 BZ22g1}
Let 
$\mathcal{B}_{\mathbb{Z}_{2}^{2}}^{g_1} = \{B\in \binom{X}{13} \mid G_B \simeq \mathbb{Z}_{2}^{2}, g_1(B)=B\}$ where $g_1 = \begin{psmallmatrix} 1 & 1 \\ 0 & 1 \end{psmallmatrix}$. Then
\[|\mathcal{B}_{\mathbb{Z}_2^2}^{g_1}| = \begin{cases}
\dfrac{2^{2n}(2^n-4)(2^n-10)}{768} & \text{if}\,\,n\,\,\text{is even},  \\
\dfrac{2^n(2^n-2)(2^n-4)(2^n-8)}{768} & \text{if}\,\,n\,\,\text{is odd}. 
\end{cases}\]
\end{lem}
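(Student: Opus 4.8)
The plan is to count $\mathcal{B}_{\mathbb{Z}_2^2}^{g_1}$ by fibering over the stabilizer. Since $g_1(B)=B$ forces $g_1\in G_B$, every $B\in\mathcal{B}_{\mathbb{Z}_2^2}^{g_1}$ has $G_B$ equal to some copy of $\mathbb{Z}_2^2$ containing $g_1$, and $B$ lies in exactly one such fibre; so I would write
\[
|\mathcal{B}_{\mathbb{Z}_2^2}^{g_1}|=\sum_{H\in\mathcal{H}_{g_1}}\#\{B\in\binom{X}{13}\mid G_B=H\},\qquad \mathcal{H}_{g_1}:=\{H\subset G\mid H\simeq\mathbb{Z}_2^2,\ g_1\in H\}.
\]
First I would pin down $\mathcal{H}_{g_1}$. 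Because $g_1$ has $\infty$ as its unique fixed point, Lemma~\ref{lem:5.14 Z22はB内に同一の固定点を持つ} together with the description of order-$2$ elements fixing $\infty$ from Lemma~\ref{lem:5.5 H'の中で共役} shows every $H\in\mathcal{H}_{g_1}$ has the form $\langle g_1,\begin{psmallmatrix}1&a\\0&1\end{psmallmatrix}\rangle=\{\begin{psmallmatrix}1&t\\0&1\end{psmallmatrix}\mid t\in\{0,1,a,a+1\}\}$ with $a\in\GF(2^n)\setminus\{0,1\}$, and $a$, $a+1$ give the same $H$; hence $|\mathcal{H}_{g_1}|=\tfrac{2^n-2}{2}$.

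Next I would count, for a fixed $H\in\mathcal{H}_{g_1}$, the sets $B$ with $H(B)=B$. The $H$-orbits on $X$ are $\{\infty\}$ together with the $2^{n-2}$ cosets in $\GF(2^n)$ of the additive subgroup $\{0,1,a,a+1\}$, each of size $4$; since $13=1+3\cdot4$ and $4\nmid13$, such a $B$ must contain $\infty$ and exactly three of these cosets, so $\#\{B\mid H(B)=B\}=\binom{2^{n-2}}{3}=\tfrac{2^n(2^n-4)(2^n-8)}{4^3\cdot3!}$. To pass from $H(B)=B$ to $G_B=H$ I would subtract the $B$ whose stabilizer properly contains $H$. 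By (\ref{eq:2.4 GB位数候補2}), Lemma~\ref{lem:2.5 dicson定理} and Table~\ref{table:4 GB候補2}, among the realizable stabilizers the only one properly containing a $\mathbb{Z}_2^2$ is $A_4$ (the dihedral groups $D_{22}$, $D_{26}$ contain no $\mathbb{Z}_2^2$); this forces $n$ even, and since the normal $\mathbb{Z}_2^2$ of an $A_4\subset G$ fixing $\infty$ is $\{\begin{psmallmatrix}1&t\\0&1\end{psmallmatrix}\mid t\in x\,\GF(4)\}$ for some $x$, membership of $g_1$ forces $a\in\{\omega,\omega^2\}$, i.e. $H=H_0:=\langle g_1,\begin{psmallmatrix}1&\omega\\0&1\end{psmallmatrix}\rangle$. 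Conversely, any $B$ with $G_B\simeq A_4$ and $g_1\in G_B$ has $H_0$ as the normal $\mathbb{Z}_2^2$ of $G_B$, so the collection of such $B$ is exactly $\mathcal{B}_{A_4}^{g_1}$ (and these $B$ occur in no fibre other than the $H_0$-fibre).

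Assembling the pieces: when $n$ is odd no $A_4$ occurs, so each of the $\tfrac{2^n-2}{2}$ subgroups contributes $\binom{2^{n-2}}{3}$ and $|\mathcal{B}_{\mathbb{Z}_2^2}^{g_1}|=\tfrac{2^n-2}{2}\binom{2^{n-2}}{3}=\tfrac{2^n(2^n-2)(2^n-4)(2^n-8)}{768}$. When $n$ is even, $H_0$ contributes $\binom{2^{n-2}}{3}-|\mathcal{B}_{A_4}^{g_1}|$ and the remaining $\tfrac{2^n-4}{2}$ subgroups each contribute $\binom{2^{n-2}}{3}$, so $|\mathcal{B}_{\mathbb{Z}_2^2}^{g_1}|=\tfrac{2^n-2}{2}\binom{2^{n-2}}{3}-|\mathcal{B}_{A_4}^{g_1}|$; substituting $|\mathcal{B}_{A_4}^{g_1}|=\tfrac{2^n(2^n-4)}{48}$ from Lemma~\ref{lem:5.10 BA4g1} and using $(2^n-2)(2^n-8)-16=2^n(2^n-10)$ collapses this to $\tfrac{2^{2n}(2^n-4)(2^n-10)}{768}$, as claimed. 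The main obstacle is the overgroup bookkeeping: confirming that $A_4$ is the only proper overgroup that can occur, that this pins $H$ down to $H_0$, and that the sets counted over distinct $H$ are genuinely disjoint (immediate, since $B$ determines $G_B=H$). Once that is settled, both cases reduce to the elementary simplifications above.
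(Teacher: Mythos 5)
Your proposal is correct and takes essentially the same route as the paper's proof: both enumerate the $\tfrac{2^n-2}{2}$ subgroups $H\simeq\mathbb{Z}_2^2$ containing $g_1$ (necessarily upper unitriangular, fixing $\infty$), count the $H$-invariant $13$-subsets per subgroup as $\binom{2^{n-2}}{3}=\tfrac{2^n(2^n-4)(2^n-8)}{4^3\cdot 3!}$, and subtract $|\mathcal{B}_{A_4}^{g_1}|$ from Lemma~\ref{lem:5.10 BA4g1} for the single subgroup $\langle g_1,\begin{psmallmatrix}1&\omega\\0&1\end{psmallmatrix}\rangle$ that sits inside an $A_4$ when $n$ is even. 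Your justification that $A_4$ is the only possible proper overgroup and that it pins $H$ down to $H_0$ is in fact spelled out more carefully than in the paper, but the decomposition and arithmetic are the same.
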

\begin{proof}
First, we consider the case where $n$ is even. By Lemma~\ref{lem:5.15 無限を固定するZ22は1101が入る形に共役である}, any subgroup isomorphic to $\mathbb{Z}_{2}^{2}$ fixing $\infty$ and containing $g_1=\begin{psmallmatrix} 1 & 1 \\ 0 & 1 \end{psmallmatrix}$ must be of the form $H \coloneq \left\langle\begin{psmallmatrix} 1 & 1 \\ 0 & 1 \end{psmallmatrix},\begin{psmallmatrix} 1 & a \\ 0 & 1 \end{psmallmatrix}\right\rangle$ for some $a\in \GF(2^n)\setminus\{0,1\}$. Since replacing $a$ by $a+1$ yields the same group $H$, we obtain $\frac{2^n-2}{2}$ possibilities for the group $H$.

Let $\mathcal{B}^{g_1}_{\mathbb{Z}_{2}^{2}}{'} = \{B\subset \binom{X}{13} \mid G_B \simeq \mathbb{Z}_{2}^{2}, g_1(B)=B, a\notin \{\omega,\omega^2\}\}$ where $g_1=\begin{psmallmatrix} 1 & 1 \\ 0 & 1 \end{psmallmatrix}$ and 
$\mathcal{B}^{g_1}_{\mathbb{Z}_{2}^{2}}{''} = \{B\subset \binom{X}{13} \mid G_B \simeq \mathbb{Z}_{2}^{2}, g_1(B)=B, a\in \{\omega,\omega^2\}\}$  where $g_1=\begin{psmallmatrix} 1 & 1 \\ 0 & 1 \end{psmallmatrix}$. As shown in Lemma~\ref{lem:5.17 BZ22}, when $a\notin \{\omega,\omega^2\}$, $H$ cannot be a subgroup of $A_4$ which fixes $\infty$ in $B$, but when $a\in \{\omega,\omega^2\}$, $H$ can be a subgroup. Following Lemma~\ref{lem:5.17 BZ22}, we consider two cases: (A) $a\notin \{\omega,\omega^2\}$ and (B) $a\in \{\omega,\omega^2\}$.

(A): For the total number of possible groups $H$, we have shown there are $\frac{2^n-2}{2}$ possibilities. Since we require $a\notin \{\omega,\omega^2\}$, we must subtract one possibility, giving $\frac{2^n-4}{2}$ possibilities for $H$. By the same argument as in Lemma~\ref{lem:5.17 BZ22}, we obtain:
\[|\mathcal{B}^{g_1}_{\mathbb{Z}_{2}^{2}}{'}| = \frac{2^n-4}{2}\cdot \frac{2^n(2^n-4)(2^n-8)}{4^3 \cdot 3!}.\]

(B): When $a\in \{\omega,\omega^2\}$, we have $H = \left\langle\begin{psmallmatrix} 1 & 1 \\ 0 & 1 \end{psmallmatrix},\begin{psmallmatrix} 1 & \omega \\ 0 & 1 \end{psmallmatrix}\right\rangle$, so there is exactly one possibility for $H$. By Lemma~\ref{lem:5.10 BA4g1}, we must subtract $|\mathcal{B}_{A_4}^{g_1}| = \frac{2^n(2^n-4)}{48}$. Therefore:
\[|\mathcal{B}^{g_1}_{\mathbb{Z}_{2}^{2}}{''}| = \frac{2^n(2^n-4)(2^n-8)}{4^3 \cdot 3!}-\frac{2^n(2^n-4)}{48}.\]
Since $|\mathcal{B}_{\mathbb{Z}_{2}^{2}}^{g_1}| = |\mathcal{B}^{g_1}_{\mathbb{Z}_{2}^{2}}{'}| + |\mathcal{B}^{g_1}_{\mathbb{Z}_{2}^{2}}{''}| $, we obtain:
\[|\mathcal{B}_{\mathbb{Z}_2^2}^{g_1}| = \frac{2^n(2^n-2)(2^n-4)(2^n-8)}{768} - \frac{2^n(2^n-4)}{48}=\frac{2^{2n}(2^n-4)(2^n-10)}{768}.\]
For odd $n$, since there are no primitive cube roots in $\GF(2^n)$, we can apply the same argument as case (A) above. Note that we obtain $\frac{2^n-2}{2}$ possibilities for the group $H$. Therefore:
\[|\mathcal{B}_{\mathbb{Z}_2^2}^{g_1}| = \frac{2^n-2}{2} \cdot \frac{2^n(2^n-4)(2^n-8)}{4^3 \cdot 3!} = \frac{2^n(2^n-2)(2^n-4)(2^n-8)}{768}.\]
\end{proof}

\begin{prop}\label{prop:5.19 m429}
\[
m_{429} = \begin{cases}
\dfrac{2^{n}(2^n-4)(2^n-10)}{576} & \text{if}\,\,n\,\,\text{is even},   \\
\dfrac{(2^n-2)(2^n-4)(2^n-8)}{576} & \text{if}\,\,n\,\,\text{is odd}.
\end{cases}
\]
\end{prop}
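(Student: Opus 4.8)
The plan is to compute $m_{429}$ with the Cauchy-Frobenius-Burnside lemma applied to the action of $G$ on the set $\mathcal{B}_{\mathbb{Z}_2^2}$ of $13$-subsets whose stabilizer is isomorphic to $\mathbb{Z}_2^2$, exactly as in the treatment of $m_{143}$. The first observation is that if $B\in\mathcal{B}_{\mathbb{Z}_2^2}$ and $g(B)=B$, then $g\in G_B\simeq\mathbb{Z}_2^2$, so $g$ has order $1$ or $2$; hence only the identity and the involutions of $G$ contribute to the Burnside sum. Since Table~\ref{Table:1 置換指標} shows that all involutions of $G$ form a single conjugacy class, of size $|O_2|=4^n-1$, and since $|\mathcal{B}_{\mathbb{Z}_2^2}^{g}|$ depends only on the conjugacy class of $g$, this reduces the count to
\[m_{429}=\frac{1}{|G|}\Bigl(|\mathcal{B}_{\mathbb{Z}_2^2}|+(4^n-1)\,|\mathcal{B}_{\mathbb{Z}_2^2}^{g_1}|\Bigr),\]
where $g_1$ is the fixed involution used in Lemma~\ref{lem:5.18 BZ22g1}.

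Next I would substitute $|G|=2^n(4^n-1)$ together with the explicit values of $|\mathcal{B}_{\mathbb{Z}_2^2}|$ and $|\mathcal{B}_{\mathbb{Z}_2^2}^{g_1}|$ from Lemmas~\ref{lem:5.17 BZ22} and~\ref{lem:5.18 BZ22g1}, treating the even and odd cases for $n$ separately (recall that for odd $n$ there are no elements of order $3$, so $A_4$ never intervenes, while for even $n$ the formulas already account for the $\mathbb{Z}_2^2$'s sitting inside some $A_4$). In each parity the factor $4^n-1$ cancels against $|G|$, leaving a sum of two rational multiples — with denominators $2304$ and $768$ — of $2^n(2^n-4)(2^n-10)$ (even $n$) or $(2^n-2)(2^n-4)(2^n-8)$ (odd $n$); combining them via $\tfrac{1}{2304}+\tfrac{3}{2304}=\tfrac{1}{576}$ yields precisely the two closed forms in the statement.

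I do not expect any genuine obstacle at this stage: the substantive work — counting $|\mathcal{B}_{\mathbb{Z}_2^2}|$ and $|\mathcal{B}_{\mathbb{Z}_2^2}^{g_1}|$, and in particular isolating the exceptional case $a\in\{\omega,\omega^2\}$ where the relevant $\mathbb{Z}_2^2$ is a subgroup of an $A_4$ (so that $|\mathcal{B}_{A_4}^{g_1}|$ must be subtracted) — was already carried out in Lemmas~\ref{lem:5.9 BA4}--\ref{lem:5.18 BZ22g1}. The only points that merit a sentence of care in the write-up are the reduction of the Burnside sum to two terms (immediate from $G_B\simeq\mathbb{Z}_2^2$) and checking that the arithmetic collapses to the same shape in both parities.
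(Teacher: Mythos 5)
Your proposal is correct and follows exactly the paper's own argument: apply the Cauchy--Frobenius--Burnside lemma to the action of $G$ on $\mathcal{B}_{\mathbb{Z}_2^2}$, observe that only the identity and the single conjugacy class of involutions contribute, and substitute the counts from Lemmas~\ref{lem:5.17 BZ22} and~\ref{lem:5.18 BZ22g1}; the arithmetic $\frac{1}{2304}+\frac{1}{768}=\frac{1}{576}$ checks out in both parities.
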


\begin{proof}
Note that $|O_2|=4^n-1$. Hence by Cauchy-Frobenius-Burnside lemma, 
\[m_{429} = \frac{1}{|G|}\left(|\mathcal{B}_{\mathbb{Z}_2^2}|+(4^n-1)|\mathcal{B}_{\mathbb{Z}_2^2}^{g_1}|\right),\]
where $g_1 = \begin{psmallmatrix} 1 & 1 \\ 0 & 1 \end{psmallmatrix}$. The claim follows by substituting the values from Lemmas~\ref{lem:5.17 BZ22} and \ref{lem:5.18 BZ22g1}.
\end{proof}

\subsection{Calculation of $m_{572}$}

\begin{remark}
When $G_B \simeq \mathbb{Z}_{3}$, the corresponding value $\lambda_B = 572$ represents the parameter of the $3$-$(2^n+1,13,572)$ design formed by the orbit $G(B)$. We aim to determine $m_{572}$, which represents the total number of orbits forming such designs.
\end{remark}

\begin{lem}\label{lem:5.21 Z3は共役}
All subgroups isomorphic to $\mathbb{Z}_3$ in $G$ are conjugate to each other.
\end{lem}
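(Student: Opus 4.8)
The plan is to reduce the statement about subgroups to the known count of conjugacy classes of elements of each order in $G$. First I would note that any $H \subset G$ with $H \cong \mathbb{Z}_3$ is cyclic of order $3$, so $H = \langle g\rangle = \{\mathrm{id}, g, g^{-1}\}$ for some $g \in G$ of order $3$, and $H$ is completely determined by its pair of non-identity elements $\{g, g^{-1}\}$. Hence, if $g_1, g_2$ are order-$3$ elements of $G$ with $h g_1 h^{-1} = g_2$ for some $h \in G$, then $h \langle g_1\rangle h^{-1} = \langle g_2\rangle$. So it is enough to prove that $G$ has a single conjugacy class of elements of order $3$.

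For that I would invoke Table~\ref{Table:1 置換指標} (Lemma~\ref{lem:2.3 置換指標}): an element of order $3$ exists in $G$ exactly when $3 \mid 2^n-1$ (equivalently $n$ even) or $3 \mid 2^n+1$ (equivalently $n$ odd), and since $2^n \equiv (-1)^n \pmod 3$ precisely one of these holds. In either case the table gives $J_3 = \phi(3)/2 = 1$, i.e. there is exactly one conjugacy class of elements of order $3$ in $G$; combined with the reduction above, this yields the claim. I would also remark that in the setting of this subsection $\mathbb{Z}_3 \cong G_B$ forces $n$ even by Proposition~\ref{prop:2.7 G_Bの元とnの条件}, so only the case $3 \mid 2^n-1$ is actually relevant.

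I do not expect a genuine obstacle here. The one point to state carefully is that the conjugator $h$ with $h g_1 h^{-1} = g_2$ transports the whole cyclic group, and that it is immaterial whether $g_1$ is carried to $g_2$ or to $g_2^{-1}$, since $\langle g_2\rangle = \langle g_2^{-1}\rangle$. If one prefers to avoid the character table, an alternative route is via Sylow theory: by Lemma~\ref{lem:2.5 dicson定理} a Sylow $3$-subgroup of $G$ lies inside one of the cyclic subgroups $\mathbb{Z}_{2^n-1}$ or $\mathbb{Z}_{2^n+1}$, hence is itself cyclic and contains a unique subgroup of order $3$; since all Sylow $3$-subgroups of $G$ are conjugate, so are their unique order-$3$ subgroups, and every $\mathbb{Z}_3 \subset G$ arises in this way.
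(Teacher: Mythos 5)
Your proposal is correct and follows essentially the same route as the paper: invoke Table~\ref{Table:1 置換指標} to see that $J_3=\phi(3)/2=1$, so all order-$3$ elements are conjugate, and then observe that a conjugacy of generators transports the cyclic subgroups. Your extra care about $\langle g_2\rangle=\langle g_2^{-1}\rangle$ and the alternative Sylow argument are both sound but not needed beyond what the paper already does.
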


\begin{proof}
By Table~\ref{Table:1 置換指標}, all elements of order $3$ in $G$ are conjugate. Since any subgroup isomorphic to $\mathbb{Z}_3$ is cyclic and uniquely determined by its generator of order $3$, the result follows immediately.
\end{proof}

\begin{prop}\label{prop:5.22 m572}
\[
m_{572} = \begin{cases}
\dfrac{(2^n-1)(2^n-4)(2^n-16)}{648} & \text{if}\,\,n\,\,\text{is even}, \\
0 & \text{if}\,\,n\,\,\text{is odd}.
\end{cases}
\]
\end{prop}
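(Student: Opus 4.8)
The plan is to follow the same pattern used for $m_{143}$ and $m_{429}$. If $n$ is odd there are no elements of order $3$ in $G$ by Table~\ref{table:2 元の位数と固定点数とn条件}, hence $\mathcal{B}_{\mathbb{Z}_3}=\emptyset$ and $m_{572}=0$; so assume $n$ is even. By the Cauchy-Frobenius-Burnside lemma, $m_{572}=\frac{1}{|G|}\sum_{g\in G}|\mathcal{B}_{\mathbb{Z}_3}^{g}|$. The first reduction is to note that if $g(B)=B$ for some $B$ with $G_B\simeq\mathbb{Z}_3$, then $\langle g\rangle\subseteq G_B$, so $|g|\in\{1,3\}$; only the identity and the elements of order $3$ contribute. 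Since all order-$3$ elements of $G$ are conjugate (Table~\ref{Table:1 置換指標}) and $|O_3|=2^n(2^n+1)$, this gives
\[m_{572}=\frac{1}{|G|}\Bigl(|\mathcal{B}_{\mathbb{Z}_3}|+2^n(2^n+1)\,|\mathcal{B}_{\mathbb{Z}_3}^{g_2}|\Bigr),\qquad g_2=\begin{psmallmatrix}\omega^2 & 0\\ 0 & \omega\end{psmallmatrix}.\]

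Next I would compute $|\mathcal{B}_{\mathbb{Z}_3}^{g_2}|$. If $g_2(B)=B$ and $G_B\simeq\mathbb{Z}_3$ then $G_B=\langle g_2\rangle$, so $|\mathcal{B}_{\mathbb{Z}_3}^{g_2}|$ is the number $\chi_{13}(g_2)$ of $13$-subsets fixed by $g_2$ minus the number of those $B$ with $g_2\in G_B$ but $G_B\not\simeq\mathbb{Z}_3$. The element $g_2$ fixes $0$ and $\infty$ and has $\frac{2^n-1}{3}$ cycles of length $3$; since $13=3\cdot 4+1$, Lemma~\ref{lem:2.6 13元集合と固定点} gives $\chi_{13}(g_2)=\binom{2}{1}\binom{(2^n-1)/3}{4}=2\binom{(2^n-1)/3}{4}$. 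By Table~\ref{table:4 GB候補2} the only realizable stabilizer order divisible by $3$ besides $3$ itself is $12$ (the exclusion of $D_6$ being Proposition~\ref{prop:3.8 GBは6ではない}), so $g_2\in G_B$ with $G_B\not\simeq\mathbb{Z}_3$ forces $G_B\simeq A_4$, and the number of such $B$ is $|\mathcal{B}_{A_4}^{g_2}|=\frac{(2^n-1)(2^n-4)}{18}$ by Lemma~\ref{lem:5.11 BA4g2}. Subtracting and using $(2^n-7)(2^n-10)-54=(2^n-1)(2^n-16)$, this simplifies to $|\mathcal{B}_{\mathbb{Z}_3}^{g_2}|=\frac{(2^n-1)^2(2^n-4)(2^n-16)}{972}$.

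For $|\mathcal{B}_{\mathbb{Z}_3}|$ I would count subgroups: by Lemma~\ref{lem:5.21 Z3は共役} all copies of $\mathbb{Z}_3$ in $G$ are conjugate, and since each contains exactly two elements of order $3$ there are $|O_3|/2=\frac{2^n(2^n+1)}{2}$ of them; each such subgroup is the full stabilizer of exactly $|\mathcal{B}_{\mathbb{Z}_3}^{g_2}|$ of the $13$-subsets (this count being independent of the chosen subgroup by conjugacy, since every $B\in\mathcal{B}_{\mathbb{Z}_3}$ has a unique stabilizer). Hence $|\mathcal{B}_{\mathbb{Z}_3}|=\frac{2^n(2^n+1)}{2}|\mathcal{B}_{\mathbb{Z}_3}^{g_2}|$; equivalently one may double-count $\sum_{|g|=3}\chi_{13}(g)=2|\mathcal{B}_{\mathbb{Z}_3}|+8|\mathcal{B}_{A_4}|$ and invoke Lemma~\ref{lem:5.9 BA4}. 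Substituting both quantities into the Burnside formula with $|G|=2^n(4^n-1)$ and $4^n-1=(2^n-1)(2^n+1)$, the factors $(2^n+1)$ and one factor $(2^n-1)$ cancel and the constant collapses to $648$, giving $m_{572}=\frac{(2^n-1)(2^n-4)(2^n-16)}{648}$.

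The step I expect to be the main obstacle is justifying the clean decomposition $\chi_{13}(g_2)=|\mathcal{B}_{\mathbb{Z}_3}^{g_2}|+|\mathcal{B}_{A_4}^{g_2}|$: this rests on the classification of $\S 3$ (Table~\ref{table:4 GB候補2}), namely that the only stabilizer types of $13$-subsets whose order is divisible by $3$ are $\mathbb{Z}_3$ and $A_4$. Once that is in hand the remainder is routine polynomial arithmetic — extracting the common factor $(2^n-1)$ from $\chi_{13}(g_2)-|\mathcal{B}_{A_4}^{g_2}|$, verifying the factorization $2^{2n}-17\cdot 2^n+16=(2^n-1)(2^n-16)$, and tracking the numerical denominators $972$, $\frac{2^n(2^n+1)}{2}$ and $2^n(4^n-1)$ down to $648$.
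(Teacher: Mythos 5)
Your proposal is correct and takes essentially the same route as the paper's own proof: Cauchy--Frobenius--Burnside over the identity and the order-$3$ elements, the identity $|\mathcal{B}_{\mathbb{Z}_3}^{g_2}| = \chi_{13}(g_2) - |\mathcal{B}_{A_4}^{g_2}|$ justified by the classification in Table 4, and $|\mathcal{B}_{\mathbb{Z}_3}| = |\mathcal{H}|\cdot|\mathcal{B}_{\mathbb{Z}_3}^{g_2}|$ with all copies of $\mathbb{Z}_3$ conjugate. One small point in your favor: you use the correct count $|\mathcal{H}| = |O_3|/2 = 2^n(2^n+1)/2$, whereas the paper's proof misprints this as $2^n(2^n-1)/2$ while still arriving at the same (correct) final value.
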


\begin{proof}
When $n$ is odd, there is no element of order $3$, thus $m_{572}=0$. Therefore, we consider the case where $n$ is even. Let $\mathcal{B}_{\mathbb{Z}_3} \coloneq \{B\in \binom{X}{13} \mid G_B \simeq \mathbb{Z}_{3}\}$ and $\mathcal{B}_{\mathbb{Z}_3}^{g_{2}} \coloneq \{B\in \binom{X}{13} \mid G_B \simeq \mathbb{Z}_{3}, g_{2}(B)=B\}$ where $g_{2}= \begin{psmallmatrix} \omega^2 & 0 \\ 0 & \omega \end{psmallmatrix}$. We have $|O_3| = 2^n(2^n+1)$, and by Cauchy-Frobenius-Burnside lemma, we obtain
\begin{equation}\label{eq:5.2 m572の定式化}
m_{572} = \frac{1}{|G|}\left(|\mathcal{B}_{\mathbb{Z}_3}|+2^n(2^n+1)|\mathcal{B}_{\mathbb{Z}_3}^{g_2}|\right).  
\end{equation}
Let $\mathcal{H} \coloneq \{H \subset G \mid H \simeq \mathbb{Z}_3\}$. By Lemma~\ref{lem:5.21 Z3は共役}, all subgroups isomorphic to $\mathbb{Z}_3$ are conjugate. Since $|O_3| = 2^n(2^n-1)$ and each $H$ contains two elements of order 3, we have $|\mathcal{H}| = \frac{2^n(2^n-1)}{2}$. 

First, we determine $|\mathcal{B}_{\mathbb{Z}_3}|$. Since all subgroups in $\mathcal{H}$ are conjugate, let $H \coloneq \left\langle\begin{psmallmatrix} \omega^2 & 0 \\ 0 & \omega \end{psmallmatrix}\right\rangle \subset \mathcal{H}$. Since $H$ fixes $0$ and $\infty$, each possibility for $B$ must contain one of these fixed points and takes the form:
\[
B = \{a,a\omega,a\omega^2,b,b\omega,b\omega^2,c,c\omega,c\omega^2,d,d\omega,d\omega^2\}\cup\{0\}\text{ or }\{\infty\}.
\]
Since we have two choices for the fixed point ($0$ or $\infty$), and for each fixed point there are $\frac{2^n-1}{3} \cdot \frac{2^n-4}{3} \cdot \frac{2^n-7}{3} \cdot \frac{2^n-10}{3} \cdot \frac{1}{4!}$ possibilities for $B$, the total number of possibilities is:
\[
2\cdot \frac{2^n-1}{3} \cdot \frac{2^n-4}{3} \cdot \frac{2^n-7}{3} \cdot \frac{2^n-10}{3} \cdot \frac{1}{4!}=\frac{(2^{n}-1)(2^{n}-4)(2^{n}-7)(2^{n}-10)}{972}.
\]
However, this count includes cases where $B \in \mathcal{B}_{A_4}^{g_2}$. By Lemma~\ref{lem:5.11 BA4g2}, we need to subtract $|\mathcal{B}_{A_4}^{g_2}| = \frac{(2^n-1)(2^n-4)}{18}$. Therefore, the exact number of possible $B$ for a fixed $H$ is $\frac{(2^n-1)(2^n-4)(2^n-7)(2^n-10)}{972}-\frac{(2^n-1)(2^n-4)}{18}$. Since we can choose $H$ in $\frac{2^n(2^n-1)}{2}$ ways, we have:
\[
|\mathcal{B}_{\mathbb{Z}_3}| = \frac{2^n(2^n-1)}{2}\left(\frac{(2^n-1)(2^n-4)(2^n-7)(2^n-10)}{972}-\frac{(2^n-1)(2^n-4)}{18}\right).
\]
Recall that we defined $\mathcal{B}_{\mathbb{Z}_3}^{g_2} \coloneq \{B\in \binom{X}{13} \mid G_B \simeq \mathbb{Z}_{3}, g_{2}(B)=B\}$ where $g_{2}= \begin{psmallmatrix} \omega^2 & 0 \\ 0 & \omega \end{psmallmatrix}$. Since $g_2 \in H$, there is clearly only one possibility for such an $H$. For this unique $H$, by the same argument as above, we have:
\[
|\mathcal{B}_{\mathbb{Z}_3}^{g_2}| = \frac{(2^n-1)(2^n-4)(2^n-7)(2^n-10)}{972}-\frac{(2^n-1)(2^n-4)}{18}.
\]
Substituting these expressions into $(\ref{eq:5.2 m572の定式化})$ yields $m_{572} = \frac{(2^n-1)(2^n-4)(2^n-16)}{648}$.
\end{proof}

\subsection{Calculation of $m_{858},m_{1716}$}

\begin{remark}
When $G_B \simeq \mathbb{Z}_2$, the corresponding value $\lambda_B = 858$ represents the parameter of the $3$-$(2^n+1,13,858)$ design formed by the orbit $G(B)$. Similarly, when $G_B \simeq \text{Id}$, we have $\lambda_B = 1716$.

To determine $m_{858}$ and $m_{1716}$, we will use a different approach than the direct application of the Cauchy-Frobenius-Burnside lemma used for $m_{143}$, $m_{429}$, and $m_{572}$. A direct approach would involve considerably more complex case analysis due to the larger number of possible configurations.

Instead, we solve a system of equations using Lemma~\ref{lem:5.1 連立方程式}. Since the values of $m_{78}$ (established in Proposition~\ref{prop:3.2 m78=1}), $m_{66}$, $m_{143}$, $m_{429}$, and $m_{572}$ (determined in previous subsections) are now known, we can substitute these values into the equations (A)' and (B)' to obtain a system of two equations with $m_{858}$ and $m_{1716}$ as the only unknowns.
\end{remark}

\begin{prop}\label{prop:5.24 m858}
Let $q=2^n$. Define the value of $W_n$ as follows:
\[W_n = \frac{1}{23040}q(q-2)(q-4)(q-8)(q-16).\]
The value of $m_{858}$ is then determined as follows:
\[m_{858} = \begin{cases}
W_n - 1, &\text{if } n \equiv 6,10,12,18,20,24,36,40,42,48,50,54 \pmod{60}, \\
W_n - 2, &\text{if } n \equiv 0,30 \pmod{60}, \\
W_n, & \text{otherwise}.
\end{cases}\]
\end{prop}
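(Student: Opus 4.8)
The plan is to solve the two-variable linear system $(A)'$, $(B)'$ of Lemma~\ref{lem:5.1 連立方程式} for $m_{858}$ by eliminating $m_{1716}$, then substituting the already-known quantities $N_{13}$, $m_{66}$, $m_{78}$, $m_{143}$, $m_{429}$, $m_{572}$ from Propositions~\ref{prop:4.3 軌道の総数N13}, \ref{prop:3.2 m78=1}, \ref{prop:3.5 m66=1}, \ref{prop:5.12 m143}, \ref{prop:5.19 m429}, and \ref{prop:5.22 m572}. Forming $1716\cdot(A)' - (B)'$ annihilates $m_{1716}$ and leaves
\[
858\,m_{858} = 1716\,N_{13} - \binom{2^n-2}{10} - 1144\,m_{572} - 1287\,m_{429} - 1573\,m_{143} - 1638\,m_{78} - 1650\,m_{66}.
\]
The key observation is that $6227020800 = 1716\cdot 10!$, so $1716\,X_{1,n} = \binom{2^n-2}{10}$ \emph{as polynomials in $q=2^n$}; hence the two degree-$10$ terms cancel identically and every surviving term has degree at most $5$.

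For odd $n$ we have $N_{13} = X_{1,n}+X_{2,n}$ and $m_{143}=m_{572}=m_{66}=m_{78}=0$, while $m_{429} = \frac{(q-2)(q-4)(q-8)}{576}$. After the cancellation above the right-hand side becomes $1716\,X_{2,n} - 1287\,m_{429}$, and it suffices to verify the polynomial identity $1716\,X_{2,n} - 1287\,m_{429} = 858\,W_n$. Factoring out the common $(q-2)(q-4)(q-8)$, this reduces to $\frac{143}{3840}(q-6)(q-10) - \frac{143}{64} = \frac{143}{3840}\,q(q-16)$, which holds since $(q-6)(q-10) - q(q-16) = 60$ and $3840 = 64\cdot 60$. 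Thus $m_{858} = W_n$ when $n$ is odd, which is subsumed in the ``otherwise'' case.

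For even $n$ write $N_{13} = X_{1,n} + X_{2,n} + X_{3,n} + \varepsilon_n$ with $\varepsilon_n \in \{0,\tfrac{5}{11},\tfrac{6}{13},\tfrac{131}{143}\}$ according to $n \bmod 60$ as in Proposition~\ref{prop:4.3 軌道の総数N13}, and substitute the even-$n$ formulas $m_{143}=\frac{q-4}{12}$, $m_{429}=\frac{q(q-4)(q-10)}{576}$, $m_{572}=\frac{(q-1)(q-4)(q-16)}{648}$. The degree-$10$ cancellation removes $1716\,X_{1,n} - \binom{2^n-2}{10}$, and one shows that the polynomial remainder $1716(X_{2,n}+X_{3,n}) - 1144\,m_{572} - 1287\,m_{429} - 1573\,m_{143}$ equals $858\,W_n$ (a finite check: factor out the common $(q-4)$ and compare coefficients of the resulting degree-$4$ polynomials). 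What is left over is the constant $1716\,\varepsilon_n - 1638\,m_{78} - 1650\,m_{66}$, and a short arithmetic gives $1716\cdot\tfrac{5}{11} - 1638 = -858$, $1716\cdot\tfrac{6}{13} - 1650 = -858$, and $1716\cdot\tfrac{131}{143} - 1638 - 1650 = -1716$, together with the value $0$ when $\varepsilon_n = 0$ and $m_{66}=m_{78}=0$. Dividing by $858$ yields $m_{858} \in \{W_n, W_n-1, W_n-2\}$; by Propositions~\ref{prop:3.2 m78=1} and \ref{prop:3.5 m66=1} the correction is $-1$ exactly when $n$ is a positive multiple of $6$ or of $10$ but not of $30$ (forcing $\varepsilon_n \in \{\tfrac{6}{13},\tfrac{5}{11}\}$ and exactly one of $m_{66},m_{78}$ equal to $1$), and $-2$ exactly when $n$ is a positive multiple of $30$ (forcing $\varepsilon_n = \tfrac{131}{143}$ and $m_{66}=m_{78}=1$); translating these conditions into residues modulo $60$ gives precisely the stated table.

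The main obstacle is the even-$n$ polynomial identity $1716(X_{2,n}+X_{3,n}) - 1144\,m_{572} - 1287\,m_{429} - 1573\,m_{143} = 858\,W_n$: once the degree-$10$ terms have cancelled everything is of degree $\le 5$ with common factor $(q-4)$, so the identity is routine to confirm, but one must keep the six denominators $46080$, $972$, $648$, $576$, $12$, $23040$ straight. All remaining work is the combinatorial bookkeeping of the constant corrections against the residue classes modulo $60$.
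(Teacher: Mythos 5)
Your proposal is correct and is exactly the route the paper intends: the paper states this proposition without a written-out proof, indicating only (in the remark preceding it) that $m_{858}$ is obtained by substituting the known values of $N_{13}$, $m_{66}$, $m_{78}$, $m_{143}$, $m_{429}$, $m_{572}$ into the system (A)$'$, (B)$'$; your elimination $1716\cdot(\text{A})'-(\text{B})'$, the cancellation $1716\,X_{1,n}=\binom{q-2}{10}$, the degree-$\le 5$ polynomial identity, and the constant corrections $-858$, $-858$, $-1716$ all check out.
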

 
\begin{prop}\label{prop:5.25 m1716}
Remark that the values of $X_{1,n}$ and $X_{2,n}$ are defined in Proposition~\ref{prop:4.3 軌道の総数N13} as follows:
\[X_{1,n}=\frac{1}{6227020800}(q-2)(q-3)(q-4)(q-5)(q-6)(q-7)(q-8)(q-9)(q-10)(q-11),\]
\[X_{2,n}=\frac{1}{46080}(q-2)(q-4)(q-6)(q-8)(q-10).\]
Let $X'_n = X_{1,n}-X_{2,n} + \frac{1}{1152}(q-2)(q-4)(q-8)$ and $Y_n = -\frac{1}{1944}(q-1)(q-4)(q-16)$. Then $m_{1716}$ is determined as follows:
\[m_{1716} = \begin{cases}
X'_n, & \text{if}\,\,n\,\,\text{is odd}, \\
X'_n + Y_n, &\text{if } n \equiv 2,4,8,14,16,22,26,28,32,34,38,44,46,52,56,58 \pmod{60}, \\
X'_n + Y_n + \frac{5}{11}, &\text{if } n \equiv 10,20,40,50 \pmod{60}, \\
X'_n + Y_n + \frac{6}{13}, &\text{if } n \equiv 6,12,18,24,36,42,48,54 \pmod{60}, \\
X'_n + Y_n + \frac{131}{143}, &\text{if } n \equiv 0,30 \pmod{60}, (n>0).
\end{cases}\]
\end{prop}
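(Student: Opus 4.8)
The plan is to read off $m_{1716}$ from equation (A)$'$ of Lemma~\ref{lem:5.1 連立方程式}, that is,
\[ m_{1716} = N_{13} - m_{858} - m_{572} - m_{429} - m_{143} - m_{78} - m_{66}, \]
and then substitute the values already established: $N_{13}$ from Proposition~\ref{prop:4.3 軌道の総数N13}, $m_{858}$ from Proposition~\ref{prop:5.24 m858}, $m_{572}$ from Proposition~\ref{prop:5.22 m572}, $m_{429}$ from Proposition~\ref{prop:5.19 m429}, $m_{143}$ from Proposition~\ref{prop:5.12 m143}, $m_{78}$ from Proposition~\ref{prop:3.2 m78=1}, and $m_{66}$ from Proposition~\ref{prop:3.5 m66=1}. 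Equation (B)$'$ is then available as an independent consistency check but is not needed once $m_{858}$ is in hand.

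The first step is to observe a cancellation. Comparing Proposition~\ref{prop:5.24 m858} with Propositions~\ref{prop:3.2 m78=1} and \ref{prop:3.5 m66=1} case by case, one sees that $m_{858} = W_n - m_{66} - m_{78}$ \emph{in every case}: the correction $-1$ in $m_{858}$ occurs precisely when exactly one of $6\mid n$, $10\mid n$ holds, the correction $-2$ precisely when $30\mid n$, while $m_{66}=1\iff 6\mid n$ and $m_{78}=1\iff 10\mid n$. Substituting this into (A)$'$, the terms $m_{66}$ and $m_{78}$ drop out and we are left with
\[ m_{1716} = N_{13} - W_n - m_{572} - m_{429} - m_{143}. \]
This already explains the shape of the answer: the fractional contributions $\tfrac{5}{11}$, $\tfrac{6}{13}$, $\tfrac{131}{143}$ in $m_{1716}$ are inherited verbatim from the corresponding contributions in $N_{13}$ (Proposition~\ref{prop:4.3 軌道の総数N13}), since $W_n$, $m_{572}$, $m_{429}$, $m_{143}$ contribute no such fractions. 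Hence it remains only to match the polynomial part in the two "reference" cases, $n$ odd and $n$ even with no extra term in $N_{13}$.

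For odd $n$ we have $N_{13}=X_{1,n}+X_{2,n}$, $m_{572}=m_{143}=0$, and $m_{429}=\tfrac{1}{576}(q-2)(q-4)(q-8)$, so the asserted value $m_{1716}=X'_n$ reduces to the identity $2X_{2,n}-W_n=\tfrac{1}{384}(q-2)(q-4)(q-8)$, which is immediate from $(q-6)(q-10)-q(q-16)=60$. For even $n$ in the first (generic) residue class we have $N_{13}=X_{1,n}+X_{2,n}+X_{3,n}$, and one must verify
\[ X_{1,n}+X_{2,n}+X_{3,n}-W_n-\tfrac{(q-1)(q-4)(q-16)}{648}-\tfrac{q(q-4)(q-10)}{576}-\tfrac{q-4}{12}=X'_n+Y_n. \]
The degree-$10$ term $X_{1,n}$ cancels on both sides; using the identity $2X_{2,n}-W_n=\tfrac{1}{384}(q-2)(q-4)(q-8)$ again and $X_{3,n}=\tfrac{1}{972}(q-4)(q-7)(q-10)$, the remainder collapses to a cubic identity in $q$ that factors through $(q-4)$ and vanishes after collecting coefficients; the scalar identities that make this work are $(q-7)(q-10)-(q-1)(q-16)=54$ and $(q-2)(q-8)-q(q-10)=16$. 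The remaining three even cases ($n\equiv 10,20,40,50$; $n\equiv 6,12,18,24,36,42,48,54$; $n\equiv 0,30$) then follow from the generic even case by adding the constants $\tfrac{5}{11}$, $\tfrac{6}{13}$, $\tfrac{131}{143}$ that $N_{13}$ contributes there, with no change to the polynomial computation.

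The main obstacle here is bookkeeping rather than mathematical depth: one must confirm that the mod-$60$ case partition of $N_{13}$ is compatible with that of $m_{858}$, establish the uniform identity $m_{858}=W_n-m_{66}-m_{78}$, and carry out the (routine but lengthy) polynomial simplification. Organizing the latter around the two scalar identities displayed above, together with the early cancellation of $X_{1,n}$, keeps the computation short and transparent.
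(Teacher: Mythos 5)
Your proposal is correct and follows essentially the same route the paper indicates: substituting the previously determined values of $N_{13}$ and the other $m_{\lambda_B}$ into equation (A)$'$ of Lemma~\ref{lem:5.1 連立方程式} and simplifying. Your uniform identity $m_{858}=W_n-m_{66}-m_{78}$ and the two scalar identities $(q-6)(q-10)-q(q-16)=60$, $(q-7)(q-10)-(q-1)(q-16)=54$, $(q-2)(q-8)-q(q-10)=16$ all check out and make the case analysis collapse cleanly onto the stated formula.
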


\section{Conclusion}
\label{sec:conclusion}

In this section, we summarize the results obtained in the previous sections and state our main theorem. Previously, it was known that for any block $B \in \binom{X}{13}$, the orbit $G(B)$ under the action of $G$ forms a $3$-design.

Therefore, the main objective of this paper was to determine all possible $3$-$(2^n+1,13,\lambda)$ designs admitting $\PSL(2,2^{n})$ as an automorphism group. To accomplish this, we needed to determine the parameter $\lambda_B$, which depends completely on $|G_B|$ as shown in equation~(\ref{eq:2.2 ラムダとG_B}), and the total number of orbits $m_{\lambda_B}$ that form 3-$(2^n+1,13,\lambda_B)$ designs for each realizable value of $\lambda_B$.

Consequently, in $\S 3$, by examining the possible subgroup structures of $\PSL(2,2^{n})$, we established Table~\ref{table:4 GB候補2}, from which we determined that
\[|G_B|\in\{1,2,3,4,12,22,26\}, \quad \lambda_{B}\in \{66,78,143,429,572,858,1716\}.\]

Moreover, in $\S 3$, we also determined the values of $m_{66}$ and $m_{78}$ in Propositions~\ref{prop:3.5 m66=1} and~\ref{prop:3.2 m78=1}, respectively, as follows:
\[
m_{66} = \begin{cases}
1 & \text{if } n \equiv 0 \pmod{6}, \\
0 & \text{otherwise},
\end{cases}
\quad
m_{78} = \begin{cases}
1 & \text{if }n \equiv 0 \pmod{10},\\
0 & \text{otherwise}.
\end{cases}
\]

In $\S 4$, we computed the total number of orbits of 13-element subsets, denoted by $N_{13}$. Let $q=2^n$ and define:
\begin{equation*}
\begin{cases}
X_{1,n}=\frac{1}{6227020800}(q-2)(q-3)(q-4)(q-5)(q-6)(q-7)(q-8)(q-9)(q-10)(q-11),\\
X_{2,n}=\frac{1}{46080}(q-2)(q-4)(q-6)(q-8)(q-10),\\
X_{3,n}=\frac{1}{972}(q-4)(q-7)(q-10),\\
X_{n}=X_{1,n}+X_{2,n}+X_{3,n}.
\end{cases}
\end{equation*}

According to Proposition~\ref{prop:4.3 軌道の総数N13}, the value of $N_{13}$ is determined as follows:
\[
N_{13} =
\begin{cases}
X_n, & \text{if } n \equiv 2,4,8,14,16,22,26,28,32,34,38,44,46,52,56,58 \pmod{60}, \\
X_n + \frac{5}{11}, & \text{if } n \equiv 10,20,40,50 \pmod{60}, \\
X_n + \frac{6}{13}, & \text{if } n \equiv 6,12,18,24,36,42,48,54 \pmod{60}, \\
X_n + \frac{131}{143}, & \text{if } n \equiv 0,30 \pmod{60}, \, n > 0, \\
X_{1,n} + X_{2,n}, & \text{if}\,\,n\,\,\text{is odd}.
\end{cases}
\]

In $\S 5$, we determined $m_{143}$, $m_{429}$, and $m_{572}$ through Propositions~\ref{prop:5.12 m143}, \ref{prop:5.19 m429}, and \ref{prop:5.22 m572} respectively, as follows:
\[
m_{143} = \begin{cases}
\frac{2^n-4}{12} & \text{if}\,\,n\,\,\text{is even}, \\
0 & \text{if}\,\,n\,\,\text{is odd}, 
\end{cases}
\quad
m_{429} = \begin{cases}
\frac{2^{n}(2^n-4)(2^n-10)}{576} & \text{if}\,\,n\,\,\text{is even},   \\
\frac{(2^n-2)(2^n-4)(2^n-8)}{576} & \text{if}\,\,n\,\,\text{is odd}, 
\end{cases}
\]
\[
m_{572} = \begin{cases}
\frac{(2^n-1)(2^n-4)(2^n-16)}{648} & \text{if}\,\,n\,\,\text{is even},   \\
0 & \text{if}\,\,n\,\,\text{is odd}.
\end{cases}
\]

Furthermore, by substituting the values of $m_{66}$, $m_{78}$, $m_{143}$, $m_{429}$, $m_{572}$, and $N_{13}$ into the system of equations (A)' and (B)' from Lemma~\ref{lem:5.1 連立方程式}, we were able to determine $m_{858}$ and $m_{1716}$. Let us define the following parameters:
\begin{equation*}
\begin{cases}
X'_n = X_{1,n}-X_{2,n} + \frac{1}{1152}(q-2)(q-4)(q-8),\\
Y_n = -\frac{1}{1944}(q-1)(q-4)(q-16),\\
W_n = \frac{1}{23040}q(q-2)(q-4)(q-8)(q-16).
\end{cases}
\end{equation*}

The values of $m_{858}$ and $m_{1716}$ are then determined through Propositions~\ref{prop:5.24 m858} and \ref{prop:5.25 m1716} respectively, as follows:
\[m_{858} = \begin{cases}
W_n - 1, & \text{if } n \equiv 6,10,12,18,20,24,36,40,42,48,50,54 \pmod{60}, \\
W_n - 2, & \text{if } n \equiv 0,30 \pmod{60}, \\
W_n, & \text{otherwise},
\end{cases}\]

and

\[m_{1716} = \begin{cases}
X'_n, & \text{if}\,\,n\,\,\text{is odd}, \\
X'_n + Y_n, & \text{if }n \equiv 2,4,8,14,16,22,26,28,32,34,38,44,46,52,56,58 \pmod{60}, \\
X'_n + Y_n + \frac{5}{11}, & \text{if } n \equiv 10,20,40,50 \pmod{60}, \\
X'_n + Y_n + \frac{6}{13}, & \text{if } n \equiv 6,12,18,24,36,42,48,54 \pmod{60}, \\
X'_n + Y_n + \frac{131}{143}, & \text{if } n \equiv 0,30 \pmod{60}, (n>0).
\end{cases}\]

Since the union of orbits also forms a $3$-design, we were able to establish our main theorem regarding the feasible values of $\lambda_B$.

\begin{thm}
\label{thm:main}
There exists a simple $3$-$(2^n+1,13,\lambda)$ design with $\PSL(2,2^{n})$ as an automorphism group if and only if 
\[\lambda=66i_{66}+78i_{78}+143i_{143}+429i_{429}+572i_{572}+858i_{858}+1716i_{1716},\]
where $0\le i_{\lambda_{B}}\le m_{\lambda_{B}}$ and $\lambda_{B}\in \{66,78,143,429,572,858,1716\}$, with $m_{\lambda_{B}}$ being the corresponding values as defined above.
\end{thm}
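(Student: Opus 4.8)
The plan is to deduce the theorem directly from Lemma~\ref{lem:2.2 デザインの自己同型群}, the classification of the admissible orbit-indices $\lambda_B$ established in $\S 3$, and the orbit counts $m_{\lambda_B}$ computed in $\S 3$--$\S 5$; the only new ingredient needed is the elementary fact that the index of a $3$-design is additive over a disjoint union of blocks. First I would isolate that fact in the form we need: if $B_1,\dots,B_r$ are pairwise distinct $G$-orbits of $13$-subsets of $X$ and $\mathcal{B}=B_1\cup\dots\cup B_r$, then every $3$-subset of $X$ lies in exactly $\sum_{j=1}^{r}\lambda_{B_j}$ members of $\mathcal{B}$, where $\lambda_{B_j}=1716/|G_{B_j}|$ is the common index of the orbit-design $B_j$. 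This holds because distinct orbits of $13$-subsets are disjoint as sets of blocks, so the containment numbers simply add; in particular such a $\mathcal{B}$ is automatically simple, and $(X,\mathcal{B})$ is a $3$-$(2^{n}+1,13,\sum_j\lambda_{B_j})$ design admitting $G$ as an automorphism group.

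For the ``only if'' direction, let $(X,\mathcal{B})$ be a simple $3$-$(2^{n}+1,13,\lambda)$ design with $G$ as an automorphism group. By Lemma~\ref{lem:2.2 デザインの自己同型群}, $\mathcal{B}$ is a union of $G$-orbits of $13$-subsets, say $\mathcal{B}=B_1\cup\dots\cup B_r$ with the $B_j$ distinct. By Table~\ref{table:4 GB候補2} (established in $\S 3$) each $\lambda_{B_j}$ lies in $\{66,78,143,429,572,858,1716\}$. Setting $i_{\lambda_B}$ equal to the number of $j$ with $\lambda_{B_j}=\lambda_B$, we get $0\le i_{\lambda_B}\le m_{\lambda_B}$ because $m_{\lambda_B}$ is by definition the total number of orbits of $13$-subsets of that index, and the additivity fact yields $\lambda=66 i_{66}+78 i_{78}+143 i_{143}+429 i_{429}+572 i_{572}+858 i_{858}+1716 i_{1716}$.

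For the ``if'' direction, suppose $\lambda=66 i_{66}+78 i_{78}+143 i_{143}+429 i_{429}+572 i_{572}+858 i_{858}+1716 i_{1716}$ with $0\le i_{\lambda_B}\le m_{\lambda_B}$ for each $\lambda_B\in\{66,78,143,429,572,858,1716\}$. For each $\lambda_B$ there are exactly $m_{\lambda_B}$ orbits of $13$-subsets with index $\lambda_B$, so we may pick $i_{\lambda_B}$ of them; let $\mathcal{B}$ be the union of all the orbits picked in this way. By Lemma~\ref{lem:2.2 デザインの自己同型群} together with the additivity fact, $(X,\mathcal{B})$ is a simple $3$-design on which $G$ acts as an automorphism group and whose index is $66 i_{66}+\dots+1716 i_{1716}=\lambda$. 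To finish I would note that the explicit functions $m_{\lambda_B}$ of $n$ are precisely those given in Propositions~\ref{prop:3.2 m78=1}, \ref{prop:3.5 m66=1}, \ref{prop:5.12 m143}, \ref{prop:5.19 m429}, \ref{prop:5.22 m572}, \ref{prop:5.24 m858}, and \ref{prop:5.25 m1716}, so the quantified statement is exactly the assertion of the theorem.

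There is no real obstacle left at this point: the substantive work — eliminating $|G_B|\in\{6,11,13\}$, computing $N_{13}$, and evaluating every $m_{\lambda_B}$ — was done in $\S 3$--$\S 5$, and the main theorem is a bookkeeping corollary. The one place to be slightly careful is conceptual rather than computational: one must read ``union of orbits'' in Lemma~\ref{lem:2.2 デザインの自己同型群} as multiplicity-free (so that simplicity of $\mathcal{B}$ comes for free) and spell out that the index of a $3$-design adds over disjoint block-sets; once this is made explicit the proof is complete.
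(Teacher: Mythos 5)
Your proposal is correct and follows exactly the route the paper intends: the theorem is a bookkeeping corollary of Lemma~\ref{lem:2.2 デザインの自己同型群}, the restriction of $\lambda_B$ to $\{66,78,143,429,572,858,1716\}$ from $\S 3$, and the orbit counts $m_{\lambda_B}$ from $\S 3$--$\S 5$, with the only glue being the additivity of the design index over disjoint unions of orbits. The paper leaves this glue implicit (``Since the union of orbits also forms a $3$-design\dots''), and your write-up simply makes it explicit, including the observation that disjointness of distinct orbits gives simplicity for free.
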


\begin{remark}
In previous research concerning the computation of $m_{\lambda_B}$ for block sizes $k=4,5,6,7,8,9$, the approach involved normalizing the sets to include $\{0,1,\infty\}$ and then dividing the number of possible sets $B$ by the number of sets with the same form contained in $G(B)$. For instance, as shown in Lemma~4.1 for block size $k=7$ \cite{6}, the method involved counting the number of sets $B$ and dividing by $2$.
However, as the block size increases, it becomes increasingly difficult to enumerate both the possible sets $B$ and the appropriate divisor. By applying the Cauchy-Frobenius-Burnside lemma to compute $m_{\lambda_B}$, we suggest that this approach could be effective not only for the yet-unstudied cases of $k=10,11,12$ but also for block sizes larger than $13$.
\end{remark}

\section*{Acknowledgements}
The authors would like to express their sincere gratitude to Professor Akihiro Munemasa for suggesting the research direction that motivated this work. We are deeply indebted to Professor Koichi Betsumiya for his constant guidance and supervision throughout this research.

We also wish to thank Professor Akihide Hanaki and Professor Yuta Kozakai for carefully reading our manuscript and providing valuable feedback that significantly improved the quality of this paper.

The support and encouragement from all these mentors have been essential to the completion of this work.

\end{document}